\def\subsection{\@startsection{subsection}{2}%
  \z@{.7\linespacing\@plus.7\linespacing}{.2\linespacing}%
  {\centering\normalfont\scshape}}
\newtheorem{theorem}{Theorem}[section]
\newtheorem{proposition}[theorem]{Proposition}
\newtheorem{lemma}[theorem]{Lemma}
\numberwithin{equation}{section}
\newcommand{\CC}{\mathbb{C}}
\newcommand{\FF}{\mathbb{F}}
\newcommand{\ZZ}{\mathbb{Z}}
\newcommand{\Q}{\mathscr{Q}}
\renewcommand{\S}{\mathscr{S}}
\newcommand{\X}{\mathscr{X}}
\newcommand{\Cc}{\mathcal{C}}
\renewcommand{\d}{\delta}
\renewcommand{\t}{\tau}
\newcommand{\e}{\epsilon}
\newcommand{\abs}[1]{\lvert#1\rvert}
\newcommand{\floor}[1]{\lfloor#1\rfloor}
\newcommand{\ang}[1]{\langle#1\rangle}
\newcommand{\sums}[1]{\sum_{\substack{#1}}}
\DeclareMathOperator{\rad}{rad}
\DeclareMathOperator{\rank}{rank}
\DeclareMathOperator{\tr}{tr}
\DeclareMathOperator{\Tr}{Tr}
\DeclareMathOperator{\wt}{wt}
\DeclareMathOperator{\GL}{GL}
\begin{document}

\title[Quadratic and symmetric bilinear forms over finite fields]{Quadratic and symmetric bilinear forms over finite fields and their association schemes}

\author{Kai-Uwe Schmidt}
\address{Department of Mathematics, Paderborn University, Warburger Str.~100, 33098 Paderborn, Germany}
\email[K.-U. Schmidt]{kus@math.upb.de}

\date{11 March 2018}

\subjclass[2010]{Primary 05E30, 15A63; Secondary 11T71, 94B15}

\begin{abstract}
Let $\Q(m,q)$ and $\S(m,q)$ be the sets of quadratic forms and symmetric bilinear forms on an $m$-dimensional vector space over $\FF_q$, respectively. The orbits of $\Q(m,q)$ and $\S(m,q)$ under a natural group action induce two translation association schemes, which are known to be dual to each other. We give explicit expressions for the eigenvalues of these association schemes in terms of linear combinations of generalised Krawtchouk polynomials, generalising earlier results for odd $q$ to the more difficult case when $q$ is even. We then study $d$-codes in these schemes, namely subsets $X$ of $\Q(m,q)$ or $\S(m,q)$ with the property that, for all distinct $A,B\in X$, the rank of $A-B$ is at least $d$. We prove tight bounds on the size of $d$-codes and show that, when these bounds hold with equality, the inner distributions of the subsets are often uniquely determined by their parameters. We also discuss connections to classical error-correcting codes and show how the Hamming distance distribution of large classes of codes over $\FF_q$ can be determined from the results of this paper.
\end{abstract}

\maketitle


\section{Introduction}

Let $q$ be a prime power and let $V=V(m,q)$ be an $m$-dimensional $\FF_q$-vector space. Let $\Q=\Q(m,q)$ be the space of quadratic forms on $V$ and let $\S=\S(m,q)$ be the space of symmetric bilinear forms on $V$. These spaces are naturally equipped with a metric induced by the rank function. The main motivation for this paper is to study $d$-codes in $\Q$ and $\S$, namely subsets $X$ of $\Q$ or $\S$ such that, for all distinct~$A,B\in X$, the rank of $A-B$ is at least $d$. We are in particular interested in the largest cardinality of $d$-codes in $\Q$ and $\S$ and in the structure of such sets when this maximum is attained. One of the applications is that $d$-codes in $\Q$ can be used to construct optimal subcodes of the second-order generalised Reed-Muller code and our theory can be used to determine the Hamming distance distributions of such codes.
\par
For odd~$q$, most of the results in this paper have been obtained by the author in~\cite{Sch2015}. The new results of this paper concern the more difficult case that $q$ is even, although whenever possible we aim for a unified treatment of the two cases.  For even~$q$, some partial results were obtained previously by the author in~\cite{Sch2010}.
\par
The main tool for studying subsets of $\Q$ and $\S$ is the beautiful theory of association schemes. It is known that $\Q(m,q)$ and $\S(m,q)$ carry the structure of a translation association scheme with $\floor{3m/2}$ classes. These have been studied by Wang, Wang, Ma, and Ma~\cite{WanWanMaMa2003}. In particular the two schemes are dual to each other and, for odd~$q$, they are isomorphic. Hence the association scheme on $\Q$ is self-dual for odd $q$. These association schemes differ considerably from the classical association schemes typically studied by coding theorists, in the sense that the association schemes on~$\Q$ and~$\S$ are neither $P$-polynomial, nor $Q$-polynomial. This means that their most important parameters, namely the $P$- and $Q$-numbers (also known as the first and second eigenvalues), do not just arise from evaluations of sets of orthogonal polynomials. For odd~$q$, the $Q$-numbers (and the $P$-numbers by self-duality) of $\S$ have been determined by the author in~\cite{Sch2015}. For even $q$, the computation of these numbers appears to be more difficult and, so far, only very limited partial results are known. Recursive formulae were given by Feng, Wang, Ma, and Ma~\cite{FenWanMaMa2008} and some special cases were computed by Bachoc, Serra, and Z\'emor~\cite{BacSerZem2017}. The $P$-numbers of $\Q(m,2)$ have been determined by Hou~\cite{Hou2001} using an interesting coding-theoretic approach, which implicitly identifies $\Q(m,2)$ with $R(2,m)$ and $\S(m,2)$ with $R(m,m)/R(m-3,m)$, where $R(r,m)$ is the Reed-Muller code of order~$r$ and length $2^m$. The main result of the present paper is the determination of the $P$- and $Q$-numbers of $\Q$ and, by duality, the $Q$- and $P$-numbers of $\S$. Although these numbers do not directly arise from evaluations of orthogonal polynomials, they can be expressed in terms of linear combinations of generalised Krawtchouk polynomials. This also simplifies the expressions given by Hou~\cite{Hou2001}.
\par
Using the $Q$-numbers of $\Q$ and $\S$, we then obtain tight bounds on the size of~$d$-codes in $\Q$ and $\S$, except in $\S$ when $d$ is even and (by self-duality) in $\Q$ when $d$ is even and $q$ is odd, and give explicit expressions for the inner distributions of $d$-codes when these bounds are attained. In the remaining cases, we obtain tight bounds for $d$-codes that are subgroups $(\Q,+)$ or $(\S,+)$.
\par
In the final section of this paper we briefly discuss the connection between $d$-codes in $\Q$ and classical error-correcting codes. 
It turns out that error-correcting codes obtained from maximal $d$-codes in~$\Q$ often compare favourably to the best known codes. We show that the Hamming distance enumerators of these error-correcting codes are uniquely determined by their parameters. This at once gives the distance enumerators of large classes of error-correcting codes 
for which many special cases have been obtained previously using different methods, for example results for (extended) binary cyclic codes obtained by Berlekamp~\cite{Ber1970} and Kasami~\cite{Kas1971}, recent results for $q$-ary cyclic codes obtained by Li~\cite{Li2017}, and many results for $q$-ary cyclic codes and odd $q$, as explained in~\cite{Sch2015}. In particular, Li~\cite{Li2017} recently determined the true minimum distance of some narrow-sense primitive BCH codes and obtained their distance enumerators in the case that $q$ is odd. These results are recovered in this paper and the distance enumerators are obtained for all $q$ as corollaries of our results.


\section{Quadratic forms and symmetric bilinear forms}

In this section we recall the definitions and basic properties of the association schemes of symmetric bilinear forms and quadratic forms from~\cite{WanWanMaMa2003}. We refer to~\cite{Del1973},~\cite{DelLev1998}, and~\cite{BanIto1984} for more background on association schemes and to~\cite[Chapter~21]{MacSlo1977} and~\cite[Chapter~30]{vLiWil2001} for gentle introductions.
\par
A (symmetric) \emph{association scheme} with $n$ classes is a pair $(\X,(R_i))$, where $\X$ is a finite set and $R_0,R_1,\dots,R_n$ are nonempty relations on $\X$ satisfying:
\begin{enumerate}
\item[(A1)] $\{R_0,R_1,\dots,R_n\}$ is a partition of $\X\times\X$ and $R_0=\{(x,x):x\in \X\}$;
\item[(A2)] Each of the relations $R_i$ is symmetric;
\item[(A3)] If $(x,y)\in R_k$, then the number of $z\in \X$ such that $(x,z)\in R_i$ and $(z,y)\in R_j$ is a constant $p^k_{ij}$ depending only on $i$, $j$, and $k$, but not on the particular choice of $x$ and~$y$.
\end{enumerate}
\par
Let $(\X,(R_i))$ be a symmetric association scheme with $n$ classes and let $D_i$ be the adjacency matrix of the graph $(X,R_i)$. The vector space generated by $D_0,D_1,\dots,D_n$ over the real numbers has dimension $n+1$ and is in fact an algebra, called the \emph{Bose-Mesner algebra} of the association scheme. There exists another uniquely defined basis for this vector space, consisting of minimal idempotent matrices $E_0,E_1,\dots,E_n$. We may write
\[
D_i=\sum_{k=0}^nP_i(k)E_k\quad\text{and}\quad E_k=\frac{1}{\abs{X}}\sum_{i=0}^nQ_k(i)D_i
\]
for some uniquely determined numbers $P_i(k)$ and $Q_k(i)$, called the \emph{$P$-numbers} and the \emph{$Q$-numbers} of $(\X,(R_i))$, respectively.
\par
Now let $V=V(m,q)$ be an $m$-dimensional $\FF_q$-vector space. We denote by 
$\Q=\Q(m,q)$ the set of quadratic forms on $V$ and by $\S=\S(m,q)$ the set of symmetric bilinear forms on $V$. Notice that $\Q$ and $\S$ are themselves $\FF_q$-vectors spaces of dimension $m(m+1)/2$.
\par
Let $G=G(m,q)$ be the direct product $\FF_q^*\times\GL_m(\FF_q)$. Then $G$ acts on~$\Q$ by $(g,Q)\mapsto Q^g$, where $Q^g$ is given by $Q^g(x)=aQ(Lx)$ and $g=(a,L)$. The semidirect product $\Q\rtimes G$ acts transitively on $\Q$ by
\[
((A,g),Q)\mapsto Q^g+A.
\]
The action of $\Q\rtimes G$ extends to $\Q\times\Q$ componentwise and partitions $\Q\times \Q$ into orbits, which define the relations of a symmetric association scheme. Two pairs of quadratic forms $(Q,Q')$ and $(R,R')$ are in the same relation if and only if there is a $g\in G$ such that $(Q-Q')^g=R-R'$. This shows that the relation containing $(Q,Q')$ depends only on $Q-Q'$, which is the defining property of a translation scheme~\cite[Chapter V]{DelLev1998}.
\par
The group $G$ also acts on~$\S$ by $(g,S)\mapsto S^g$, where $S^g$ is given by $S^g(x,y)=aS(Lx,Ly)$ and $g=(a,L)$. The semidirect product $\S\rtimes G$ acts transitively on $\S$ by
\[
((A,g),S)\mapsto S^g+A.
\]
Again, the action of $\S\rtimes G$ extends to $\S\times\S$ componentwise and so partitions $\S\times \S$ into orbits, which define the relations of a symmetric association scheme. Two pairs of symmetric bilinear forms $(S,S')$ and $(T,T')$ are in the same relation if and only if there is a $g\in G$ such that $(S-S')^g=T-T'$, which again makes the association scheme a translation scheme.
\par
When $q$ is odd, every quadratic form $Q\in\Q$ gives rise to a symmetric bilinear form $S\in\S$ via
\begin{equation}
S(x,y)=Q(x+y)-Q(x)-Q(y),   \label{eqn:S_from_Q}
\end{equation}
from which we can recover $Q$ by $Q(x)=\tfrac{1}{2}S(x,x)$. This shows that the association schemes on $\Q$ and $\S$ are isomorphic when $q$ is odd. We shall see that this is not the case when $q$ is even.
\par
Now let $\{\alpha_1,\alpha_2,\dots,\alpha_m\}$ be a basis for $V(m,q)$. For every quadratic form $Q\in\Q$, there exist $A_{ij}\in\FF_q$ such that
\begin{equation}
Q\bigg(\sum_{i=1}^mx_i\alpha_i\bigg)=\sum_{i,j=1}^mA_{ij}x_ix_j   \label{eqn:coordinate_rep_Q}
\end{equation}
for all $(x_1,x_2,\dots,x_m)\in\FF_q^m$. We say that the right hand side is the \emph{coordinate representation} of $Q$ (with respect to the basis chosen). The matrix $A=(A_{ij})$ is only unique modulo the subgroup of $m\times m$ alternating matrices over $\FF_q$. Accordingly we associate with $Q$ the coset $[A]$ of alternating matrices containing $A$.
\par
Let $\{\beta_1,\beta_2,\dots,\xi_m\}$ be another basis for $V(m,q)$. For every symmetric bilinear form $S\in\S$ we then have
\[
S\bigg(\sum_{i=1}^mx_i\beta_i,\sum_{j=1}^my_j\beta_j\bigg)=\sum_{i,j=1}^mB_{ij}x_iy_j,
\]
for all $(x_1,x_2,\dots,x_m)\in\FF_q^m$, where $B_{ij}=S(\beta_i,\beta_j)$. Again, we refer to the right hand side as the \emph{coordinate representation} of $S$. We associate with every symmetric bilinear form the corresponding $m\times m$ symmetric matrix $B=(B_{ij})$.
\par
Let $\chi:\FF_q\to\CC^*$ be a fixed nontrivial character of $(\FF_q,+)$. Hence, if $q=p^k$ for a prime $p$ and an integer $k$, then $\chi(y)=\omega^{\Tr(\theta y)}$ for some fixed $\theta\in\FF_q^*$ and some fixed primitive complex $p$-th root of unity $\omega$. Here, $\Tr:\FF_q\to\FF_p$ is the \emph{absolute trace function} on~$\FF_q$ defined by
\[
\Tr(y)=\sum_{i=1}^ky^{p^i}.
\]
For $Q\in\Q$ and $S\in\S$, write
\begin{equation}
\ang{Q,S}=\chi(\tr(AB)),   \label{eqn:inner_product}
\end{equation}
where $[A]$ is the coset of alternating matrices associated with $Q$ and $B$ is the symmetric matrix associated with $S$ and $\tr$ is the matrix trace. Note that $\ang{Q,S}$ is well defined since $\tr(CD)=0$ if $C$ is alternating and $D$ is symmetric. It is readily verified that $\ang{\,\cdot\,,S}$ ranges through all characters of $(\Q,+)$ when $S$ ranges over $\S$ and that $\ang{Q,\,\cdot\,}$ ranges through all characters of $(\S,+)$ when $Q$ ranges over~$\Q$. Notice that this correspondence depends on the choice of the bases.
\par
The following duality result was observed in~\cite{WanWanMaMa2003}.
\enlargethispage{1.3ex}
\begin{proposition}{\cite[Proposition~3.2]{WanWanMaMa2003}}
\label{pro:duality}
For every $g\in G$ with $g=(a,L)$, we have
\[
\ang{Q^g,S}=\ang{Q,S^h},
\]
where $h=(a,L^T)$.
\end{proposition}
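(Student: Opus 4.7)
The plan is to unravel both sides in coordinates, using the chosen bases to identify $Q$ with a coset $[A]$ of alternating matrices and $S$ with a symmetric matrix $B$. With these identifications, the coordinate representation \eqref{eqn:coordinate_rep_Q} amounts to writing $Q(x) = x^T A x$ (for $x$ the column vector of coordinates, well defined modulo the alternating part), and similarly $S(x, y) = x^T B y$. Viewing the linear map $L$ as acting on coordinate vectors by the matrix $L$, this reduces everything to an identity about traces of products of matrices.

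The first step is to compute the matrix representations of $Q^g$ and $S^h$ for $g = (a, L)$ and $h = (a, L^T)$. Directly,
\[
Q^g(x) = a\, Q(Lx) = a\, (Lx)^T A (Lx) = x^T (a L^T A L) x,
\]
so $Q^g$ corresponds to the coset $[a L^T A L]$. Analogously,
\[
S^h(x, y) = a\, S(L^T x, L^T y) = x^T (a L B L^T) y,
\]
so $S^h$ corresponds to the symmetric matrix $a L B L^T$. I would briefly note that these assignments are well defined on cosets, since $L^T C L$ is alternating whenever $C$ is alternating (a quick check handling the characteristic $2$ case separately), though this is already implicit in the fact that $\ang{\,\cdot\,,\,\cdot\,}$ in \eqref{eqn:inner_product} is well defined.

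The second step is to feed these matrices into the definition \eqref{eqn:inner_product}:
\[
\ang{Q^g, S} = \chi\bigl(\tr(a L^T A L B)\bigr)
\quad\text{and}\quad
\ang{Q, S^h} = \chi\bigl(\tr(A \cdot a L B L^T)\bigr).
\]
Pulling the scalar $a$ outside and invoking the cyclic invariance of the trace gives
\[
\tr(L^T A L B) = \tr(A L B L^T),
\]
which yields the desired equality. The only mildly subtle point is the coset well-definedness just mentioned; the rest is routine matrix manipulation, so I do not anticipate a genuine obstacle.
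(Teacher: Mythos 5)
Your proof is correct. The paper does not actually prove this proposition---it is quoted directly from \cite[Proposition~3.2]{WanWanMaMa2003}---but your argument is the standard direct verification: compute the matrices of $Q^g$ and $S^h$ in the chosen coordinates as $[aL^TAL]$ and $aLBL^T$, check that conjugation $C\mapsto L^TCL$ preserves alternating matrices so that the coset assignment is well defined, and finish with cyclic invariance of the trace, $\tr(L^TALB)=\tr(ALBL^T)$. No gaps.
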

\par
Proposition~\ref{pro:duality} shows that the association schemes on $\Q$ and $\S$ are dual to each other in the strong sense of~\cite[Definition~11]{DelLev1998}.
\par
In what follows, we shall describe the relations of $\Q$ and $\S$ explicitly. For a symmetric bilinear form $S\in\S$, the \emph{radical} is defined to be 
\[
\rad(S)=\{x\in V:\text{$S(x,y)=0$ for every $y\in V$}\}.
\]
The \emph{rank} of $S$ is the codimension of the radical and coincides with the rank of the symmetric matrix associated with $S$. For a quadratic form $Q\in\Q$, let $S_Q(x,y)=Q(x+y)-Q(x)-Q(y)$ be the associated symmetric bilinear form, and define the \emph{radical} of $Q$ to be 
\[
\rad(Q)=\{x\in \rad(S_Q):Q(x)=0\}.
\]
The \emph{rank} of $Q$ is defined to be the codimension of its radical.
\par
The following result describes the orbits of the action of $G$ on $\Q$ and was essentially obtained by Dickson~\cite{Dic1958}.
\begin{proposition}
\label{pro:canonical_quad_forms}
The action of $G$ on $\Q(m,q)$ partitions $\Q(m,q)$ into $\floor{3m/2}+1$ orbits, one of them contains just the zero form. There is one orbit for each odd rank~$r$ and one representative is, in coordinate representation, 
\[
\sum_{i=1}^{(r-1)/2}x_{2i-1}x_{2i}+x_r^2.
\]
There are two orbits for each nonzero even rank $r$ and representatives from the two orbits are, in coordinate representation,
\begin{gather}
\sum_{i=1}^{r/2}x_{2i-1}x_{2i},   \label{eqn:Q_hyperbolic}\\
\sum_{i=1}^{r/2-1}x_{2i-1}x_{2i}+Q_0,   \label{eqn:Q_elliptic}
\end{gather}
where
\[
Q_0=\begin{cases}
x_{r-1}^2+x_{r-1}x_r+\alpha x_r^2  & \text{for $q$ even}\\
x_{r-1}^2-\beta x_r^2              & \text{for $q$ odd}
\end{cases}
\]
and $\alpha\in\FF_q^*$ is a fixed element satisfying $\Tr(\alpha)=1$ (for even $q$) and $\beta\in\FF_q^*$ is a fixed nonsquare in $\FF_q^*$ (for odd $q$). 
\end{proposition}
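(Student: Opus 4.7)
The plan is to stratify $\Q(m,q)$ by rank (clearly $G$-invariant, since $(a,L)$ maps $\rad(Q)$ bijectively onto $L^{-1}\rad(Q)$), exhibit canonical representatives within each stratum, and distinguish the two even-rank orbits by a finer $G$-invariant.

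For existence I would induct on $m$ by splitting off hyperbolic planes $x_{2i-1}x_{2i}$. In odd characteristic, diagonalize $Q$ as $\sum_i a_i x_i^2$; any pair $ax^2+by^2$ with $-ab\in(\FF_q^*)^2$ recombines as $xy$, and by Chevalley--Warning every diagonal form of rank $\geq 3$ is isotropic. This leaves an anisotropic core of rank $0$, $1$, or $2$, which $\FF_q^*$-scaling then normalizes to the stated forms. In even characteristic I would use the alternating form $S_Q$: choose a complement $U$ of $\rad(S_Q)$ with a symplectic basis $\{e_i,f_i\}_{i=1}^s$, so that $Q$ restricts on $U$ to $\sum_{i=1}^s(a_ix_{2i-1}^2+x_{2i-1}x_{2i}+b_ix_{2i}^2)$ with $a_i=Q(e_i)$, $b_i=Q(f_i)$. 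The Arf invariant $a_ib_i$ modulo the hyperplane $\wp(\FF_q):=\{t+t^2:t\in\FF_q\}=\ker\Tr$ detects whether each $2$-dimensional piece is hyperbolic; because the sum of two elliptic Arf invariants lies in $\wp(\FF_q)$, at most one elliptic plane survives. On $\rad(S_Q)$, $Q$ is Frobenius-semilinear ($Q(cx)=c^2Q(x)$, $Q(x+y)=Q(x)+Q(y)$), so its image has $\FF_q$-dimension $0$ or $1$; in the latter case bijectivity of Frobenius yields a single $x_r^2$ term after rescaling.

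If $Q|_{\rad(S_Q)}\neq 0$ (odd rank), the free $x_r^2$ absorbs the Arf invariant: the substitution $x_r\mapsto x_r+ux+vy$ in $\GL_m(\FF_q)$ sends $ax^2+xy+by^2+x_r^2$ to $(a+u^2)x^2+xy+(b+v^2)y^2+x_r^2$ (all cross terms vanish in characteristic $2$), and taking $u^2=a$, $v^2=b$ clears the elliptic term, so each odd rank yields a single orbit. For even ranks, distinctness follows from $G$-invariance of the Arf invariant modulo $\wp(\FF_q)$ (respectively the discriminant modulo $(\FF_q^*)^2$ for odd $q$), each of which is preserved by $\FF_q^*$-scaling: rescaling the symplectic dual basis to match $S_{aQ}=aS_Q$ leaves $Q(e_i)Q(f_i)$ unchanged, and $a^r\in(\FF_q^*)^2$ for even $r$. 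Summing gives $1+\floor{(m+1)/2}+2\floor{m/2}=\floor{3m/2}+1$ orbits. The main obstacle is the even-characteristic bookkeeping: reconciling the ranks of $Q$ and $S_Q$ via the semilinear restriction $Q|_{\rad(S_Q)}$, and confirming that the Arf invariant is genuinely preserved by the entire group $G$, in particular under the $\FF_q^*$-scaling factor.
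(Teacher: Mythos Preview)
The paper does not prove this proposition; it is stated as a classical result attributed to Dickson, with no argument supplied. Your sketch is correct and is exactly the standard classification: Witt-decompose into hyperbolic planes plus an anisotropic kernel of rank at most~$2$, separate the two even-rank classes by the discriminant modulo $(\FF_q^*)^2$ (odd~$q$) or the Arf invariant modulo $\wp(\FF_q)$ (even~$q$), and then observe that the $\FF_q^*$-factor in $G$ merges the two $\GL_m$-orbits at each odd rank while preserving the even-rank pair, since scaling by~$a$ multiplies the discriminant by~$a^r$. Your absorption substitution $x_r\mapsto x_r+ux+vy$ in characteristic~$2$ is precisely the mechanism that collapses the odd-rank orbits there, and your verification that $\FF_q^*$-scaling fixes $Q(e_i)Q(f_i)$ after renormalising the symplectic basis is the right way to handle the Arf invariant under~$G$. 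One line you might add for completeness in the odd-$q$ even-rank case: having named the discriminant as the separating invariant, record that the two displayed representatives actually differ in discriminant by the nonsquare factor $4\beta$.
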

\par
If $Q$ belongs to an orbit corresponding to~\eqref{eqn:Q_hyperbolic}, then $Q$ is called \emph{hyperbolic} or of \emph{type} $1$ and if~$Q$ belongs to an orbit corresponding to~\eqref{eqn:Q_elliptic}, then $Q$ is called \emph{elliptic} or of \emph{type} $-1$. By convention, the zero form is a hyperbolic quadratic form of rank $0$. Let $\Q_{2s+1}$ be the set of quadratic forms on $V$ of rank ${2s+1}$ and let $\Q_{2s,1}$ and $\Q_{2s,-1}$ be the sets of hyperbolic and elliptic quadratic forms on $V$ of rank $2s$, respectively. Write
\[
I=\left\{2s+1:s\in\ZZ\right\}\cup \left\{(2s,\t):s\in\ZZ,\t=\pm 1\right\}.
\]
and, for every $i\in I$, define the relations 
\begin{equation}
R_i=\{(Q,Q')\in\Q\times \Q:Q-Q'\in \Q_i\}.   \label{eqn:relations_Q}
\end{equation}
The nonempty relations are then precisely the relations of the association scheme of quadratic forms. 
\par
The following result describes the orbits of the action of $G$ on $\S$ and was essentially obtained by Albert~\cite{Alb1938} (and, for odd $q$, also follows from Proposition~\ref{pro:canonical_quad_forms} via~\eqref{eqn:S_from_Q}).
\begin{proposition}
\label{pro:canonical_sym_bil_forms}
The action of $G$ on $\S(m,q)$ partitions $\S(m,q)$ into $\floor{3m/2}+1$ orbits, one of them contains just the zero form. There is one orbit for each odd rank~$r$ and one representative is, in coordinate representation, 
\[
\sum_{i=1}^{(r-1)/2}(x_{2i-1}y_{2i}+x_{2i}y_{2i-1})+x_ry_r.
\]
There are two orbits for each nonzero even rank $r$ and representatives from the two orbits are, in coordinate representation,
\begin{gather}
\sum_{i=1}^{r/2}(x_{2i-1}y_{2i}+x_{2i}y_{2i-1}),   \label{eqn:S_hyperbolic}\\
\sum_{i=1}^{r/2-1}(x_{2i-1}y_{2i}+x_{2i}y_{2i-1})+S_0,   \label{eqn:S_elliptic}
\end{gather}
where 
\[
S_0=\begin{cases}
x_{r-1}y_r+x_ry_{r-1}+x_ry_r & \text{for even $q$}\\
x_{r-1}y_{r-1}-\beta x_ry_r       & \text{for odd $q$},
\end{cases}
\]
and $\beta$ is a fixed nonsquare of $\FF_q^*$ (for odd $q$).
\end{proposition}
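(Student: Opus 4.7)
The plan is to deduce the orbit structure of the $G$-action on $\S(m,q)$ by classifying symmetric matrices up to $\GL_m(\FF_q)$-congruence $B\mapsto LBL^T$ combined with the $\FF_q^*$-scaling action $B\mapsto aB$. For odd $q$ this reduces immediately to Proposition~\ref{pro:canonical_quad_forms}: the polarisation $Q\mapsto S_Q$ from~\eqref{eqn:S_from_Q} is $G$-equivariant and invertible (with inverse $S\mapsto \tfrac{1}{2}S(x,x)$), so every canonical quadratic form from Proposition~\ref{pro:canonical_quad_forms} transports to a canonical symmetric bilinear form of the same rank and type; a direct calculation confirms that the representatives~\eqref{eqn:Q_hyperbolic} and~\eqref{eqn:Q_elliptic} polarise to~\eqref{eqn:S_hyperbolic} and~\eqref{eqn:S_elliptic}.

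For even $q$ the polarisation map is no longer a bijection, so a separate argument is needed. First I would show that the rank $r$ of $S$, defined as the codimension of $\rad(S)$, is $G$-invariant, and that every $S$ is $\GL_m(\FF_q)$-equivalent to a form obtained by padding a nondegenerate symmetric bilinear form on an $r$-dimensional space with zeros on a complementary $\rad(S)$. It therefore suffices to classify the nondegenerate case up to $G$-equivalence.

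The next step is to introduce a further invariant: whether $S$ is \emph{alternating}, i.e.\ whether $S(x,x)=0$ for all $x$. In even characteristic this is not automatic from symmetry. If $S$ is alternating, standard symplectic reduction forces $r$ to be even and produces the hyperbolic representative~\eqref{eqn:S_hyperbolic}. If $S$ is not alternating, pick $v$ with $S(v,v)\neq 0$; by rescaling via $a\in\FF_q^*$ and splitting off the plane spanned by $v$ and a suitable partner vector, I would proceed by induction on $r$. For $r$ odd this yields the unique odd-rank representative. For $r$ even, the induction reduces the problem to a $2$-dimensional nondegenerate non-alternating form, whose classification (after scaling) supplies exactly one additional orbit, represented by~\eqref{eqn:S_elliptic}.

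The hard step is this last one: showing that, over $\FF_q$ with $q$ even, every nondegenerate non-alternating $2\times 2$ symmetric bilinear form is $G$-equivalent to $x_1y_2+x_2y_1+\alpha x_2y_2$ for any fixed $\alpha$ with $\Tr(\alpha)=1$, and that this form is not $G$-equivalent to the hyperbolic one. After a basis change the form becomes $x_1y_2+x_2y_1+cx_2y_2$ for some $c\in\FF_q^*$, and the remaining classification under $\GL_2(\FF_q)$ is governed by the Artin--Schreier class of $c$ modulo $\{t^2+t:t\in\FF_q\}$; the $\FF_q^*$-scaling then collapses all nonzero classes into one, leaving precisely the class of any $\alpha$ with $\Tr(\alpha)=1$. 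Inequivalence with the hyperbolic form follows because being alternating is preserved by both $\GL_m$-congruence and $\FF_q^*$-scaling. Counting the orbits gives $1+\lceil m/2\rceil+2\floor{m/2}=\floor{3m/2}+1$, as claimed.
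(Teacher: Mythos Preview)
The paper does not actually prove this proposition: it cites Albert~\cite{Alb1938} for the general case and remarks that for odd $q$ it follows from Proposition~\ref{pro:canonical_quad_forms} via the polarisation~\eqref{eqn:S_from_Q}. Your treatment of odd $q$ is exactly this remark, so there you agree with the paper. (A small quibble: polarising the representatives of Proposition~\ref{pro:canonical_quad_forms} produces $2x_ry_r$ and $2x_{r-1}y_{r-1}-2\beta x_ry_r$ rather than the forms in the statement; you still need one application of the $\FF_q^*$-scaling and a diagonal basis change to remove the factor $2$.)

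For even $q$ your outline --- reduce to the nondegenerate case, separate alternating from non-alternating, handle the alternating case symplectically --- is Albert's argument and is fine. The problem is your ``hard step''. You claim that for a $2$-dimensional nondegenerate non-alternating form, after reducing to $x_1y_2+x_2y_1+cx_2y_2$, the $\GL_2(\FF_q)$-congruence class is governed by the Artin--Schreier class of $c$, and that only the $\FF_q^*$-scaling collapses these classes. This is the classification of \emph{quadratic} forms in characteristic~$2$, not of symmetric bilinear forms, and it is false here: for any $c\in\FF_q^*$ the congruence $L=\mathrm{diag}(c^{1/2},c^{-1/2})$ already sends $\bigl(\begin{smallmatrix}0&1\\1&c\end{smallmatrix}\bigr)$ to $\bigl(\begin{smallmatrix}0&1\\1&1\end{smallmatrix}\bigr)$, with no trace condition and no need for the $\FF_q^*$-action. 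Correspondingly, the representative in the proposition has $S_0=x_{r-1}y_r+x_ry_{r-1}+x_ry_r$ with coefficient $1$, not an $\alpha$ with $\Tr(\alpha)=1$; your target is the wrong one.

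The clean way to run the even-$q$ induction is not to peel off planes case by case, but to use that in characteristic~$2$ the map $x\mapsto S(x,x)$ is the Frobenius twist of a linear form. Its kernel $H$ is a hyperplane on which $S$ is alternating. If $H^\perp\not\subseteq H$ you split off a line $\langle v_r\rangle$ with $S(v_r,v_r)=1$ orthogonally, $S|_H$ is nondegenerate alternating, $r$ is odd, and you get the odd-rank form. If $H^\perp\subseteq H$ you pick $v_r$ with $S(v_r,v_r)=1$ and $v_{r-1}$ spanning $H^\perp$ with $S(v_{r-1},v_r)=1$, split off this plane, and the orthogonal complement is nondegenerate alternating of dimension $r-2$; this gives the even-rank non-alternating form. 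No Artin--Schreier enters.
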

\par
Let $\S_{2s+1}$ be the set of symmetric bilinear forms on $V$ of rank $2s+1$ and let $\S_{2s,1}$ and $\S_{2s,-1}$ be the sets of symmetric bilinear forms on $V$ of rank $2s$ corresponding to the orbits~\eqref{eqn:S_hyperbolic} and~\eqref{eqn:S_elliptic}, respectively. Symmetric bilinear forms in $\S_{2s,\t}$ are said to be of \emph{type} $\t$. For even $q$, it can be shown~\cite{Alb1938} that $\S_{2s,1}$ contains precisely the alternating bilinear forms of rank $2s$. For every $i\in I$, define the relations 
\begin{equation}
R'_i=\{(S,S')\in\S\times \S:S-S'\in \S_i\}.   \label{eqn:relations_S}
\end{equation}
The nonempty relations are then precisely the relations of the association scheme of symmetric bilinear forms. 
\par
Now write $v_i=\abs{\Q_i}$ and $\mu_i=\abs{\S_i}$, whose nonzero values are called the \emph{valencies} of the association schemes on $\Q$ and $\S$, respectively. The numbers $v_i$ have been determined by McEliece~\cite{McE1969}, following the work of Dickson~\cite{Dic1958}. Since the association schemes on $\Q$ and $\S$ are isomorphic for odd $q$, we have $\mu_i=v_i$ for odd $q$. For even~$q$, the numbers~$\mu_i$ were determined by MacWilliams~\cite{Mac1969}. We summarise the results in the following form.
\begin{proposition}
\label{pro:valencies_multiplicities}
We have
\begin{align*}
v_{2s+1}&=\mu_{2s+1}=\frac{1}{q^s}\;\frac{\prod\limits_{i=0}^{2s}(q^m-q^i)}{\prod\limits_{i=0}^{s-1}(q^{2s}-q^{2i})},\\[1ex]
v_{2s,\t}&=\frac{q^s+\t}{2}\;\frac{\prod\limits_{i=0}^{2s-1}(q^m-q^i)}{\prod\limits_{i=0}^{s-1}(q^{2s}-q^{2i})},\\[1ex]
\mu_{2s,\t}&=\big(\alpha_\t q^s+\t\beta_sq^{-s}\big)\;\frac{\prod\limits_{i=0}^{2s-1}(q^m-q^i)}{\prod\limits_{i=0}^{s-1}(q^{2s}-q^{2i})},
\end{align*}
where
\[
\alpha_\t=\begin{cases}
\tfrac{1}{2}(1-\t) & \text{for even $q$}\\[1ex]
\tfrac{1}{2}       & \text{for odd $q$}
\end{cases}
\quad\text{and}\quad
\beta_s=\begin{cases}
1                  & \text{for even $q$}\\[1ex]
\tfrac{1}{2}\, q^s & \text{for odd $q$}.
\end{cases}
\]
\end{proposition}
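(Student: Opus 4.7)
The plan is to reduce each count to the number of nondegenerate forms on a smaller space, and then apply orbit--stabilizer with classical group orders. Given $Q\in\Q(m,q)$ of rank $r$, the radical $\rad(Q)$ has codimension $r$, and $Q$ descends to a nondegenerate quadratic form of the same type on $V/\rad(Q)$. Conversely, any nondegenerate quadratic form of type $\tau$ on $V(r,q)$ lifts (via any fixed isomorphism with $V/W$) to a quadratic form on $V(m,q)$ with prescribed radical $W$ of codimension $r$. The analogous statement holds for symmetric bilinear forms. Writing $N_Q(r,\tau)$ and $N_S(r,\tau)$ for the number of nondegenerate quadratic (resp.\ symmetric bilinear) forms of rank $r$ and type $\tau$ on $V(r,q)$, this yields
\[
v_{r,\tau}=\binom{m}{r}_q N_Q(r,\tau),\qquad \mu_{r,\tau}=\binom{m}{r}_q N_S(r,\tau),
\]
where the Gaussian binomial coefficient counts codimension-$r$ subspaces of $V(m,q)$.

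Next, Propositions~\ref{pro:canonical_quad_forms} and~\ref{pro:canonical_sym_bil_forms} identify the set of nondegenerate forms of a given rank and type on $V(r,q)$ as a single orbit of $G_r=\FF_q^*\times\GL_r(\FF_q)$, whence
\[
N_Q(r,\tau)=\frac{\abs{G_r}}{\abs{\mathrm{Stab}_{G_r}(Q_0)}}
\]
for a canonical representative $Q_0$, and similarly for $N_S(r,\tau)$. The stabilizer of a nondegenerate $Q_0$ of type $\tau$ is a general orthogonal similarity group, whose order is classical (Dickson~\cite{Dic1958}); for symmetric bilinear forms the analogous stabilizers are the general symplectic similarity group (in the alternating case, occurring for even $q$) and its non-alternating analogue. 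After substituting $\abs{\GL_r(\FF_q)}=\prod_{i=0}^{r-1}(q^r-q^i)$ and expanding the Gaussian binomial, routine simplification produces the stated formulas; the parameters $\alpha_\tau$ and $\beta_s$ emerge naturally from the distinct stabilizer orders across the four cases.

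The main technical obstacle is $\mu_{2s,-1}$ in even characteristic, where a nondegenerate non-alternating symmetric bilinear form does \emph{not} have a symplectic stabilizer and must be analysed separately using Albert's classification~\cite{Alb1938}; this is the source of the extra term $\alpha_\tau q^s$ (with $\alpha_{-1}=1$, $\alpha_1=0$ in characteristic two) that is absent in the odd-$q$ formula. The remaining counts are essentially standard: $v_{r,\tau}$ for all $q$ goes back to McEliece~\cite{McE1969} and Dickson~\cite{Dic1958}; $\mu_{r,\tau}$ for odd $q$ follows from the isomorphism~\eqref{eqn:S_from_Q} between $\Q$ and $\S$, giving $\mu_i=v_i$; and $\mu_{2s,1}$ for even $q$ reduces to the classical count of nondegenerate alternating bilinear forms of rank $2s$.
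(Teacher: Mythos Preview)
The paper does not actually prove Proposition~\ref{pro:valencies_multiplicities}; it is stated as a summary of results from the literature, with the $v_i$ attributed to McEliece~\cite{McE1969} and Dickson~\cite{Dic1958}, the odd-$q$ case of $\mu_i$ deduced from the isomorphism $\Q\cong\S$, and the even-$q$ case of $\mu_i$ attributed to MacWilliams~\cite{Mac1969}. So there is no ``paper's own proof'' to compare against.

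Your proposal is a correct outline of the standard argument underlying those references: factor the count as a Gaussian binomial (for the choice of radical) times the number of nondegenerate forms on the quotient, and compute the latter by orbit--stabilizer, identifying the stabilizers as the relevant similarity groups. This is exactly the route taken in the classical sources the paper cites, so your sketch is in line with what an expanded proof would look like. One small bibliographic point: for the even-$q$ count $\mu_{2s,-1}$ the paper credits MacWilliams~\cite{Mac1969} rather than Albert; Albert~\cite{Alb1938} supplies the classification (Proposition~\ref{pro:canonical_sym_bil_forms}), while MacWilliams carried out the enumeration. The mathematics of your sketch is unaffected.
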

\par
We conclude this section by noting that our association scheme on $\S$ is slightly different from the association schemes on $\S$ in~\cite{WanWanMaMa2003} and~\cite{Sch2015}. The difference is that in~\cite{WanWanMaMa2003} and~\cite{Sch2015} the group $G$ is just $\GL_m(\FF_q)$, which increases the number of orbits from $\floor{3m/2}+1$ to $2m+1$ in the case that $q$ is odd. Another difference to~\cite{Sch2015} is that the sets $\S_{2s,1}$ and $\S_{2s,-1}$, and so also the relations $R'_{2s,1}$ and $R'_{2s,-1}$ on $\S$, are interchanged when $s$ is odd and $q\equiv 3\pmod 4$.


\section{Computation of the $Q$- and $P$-numbers}

Throughout this section we identify quadratic forms with the corresponding cosets of alternating matrices and symmetric bilinear forms with the corresponding symmetric matrices. For $A,B\in\FF_q^{m\times m}$, we write
\[
\ang{A,B}=\chi(\tr(AB)),
\]
where $\chi$ is the same nontrivial character as in~\eqref{eqn:inner_product}. 
The $Q$-numbers and the $P$-numbers of the association scheme on $\Q$ are given by the character sums (see~\cite[Section V]{DelLev1998}, for example)
\begin{align}
Q_k(i)&=\sum_{B\in\S_k}\ang{A,B}\quad \text{for $[A]\in\Q_i$},   \label{eqn:Q_char_sum}\\
P_i(k)&=\sum_{[A]\in\Q_i}\ang{A,B}\quad \text{for $B\in\S_k$},   \label{eqn:P_char_sum}
\end{align}
respectively, where $k,i\in I$. The $Q$-numbers $Q'_i(k)$ and the $P$-numbers $P'_k(i)$ of the association scheme on $\S$ satisfy
\[
Q'_i(k)=P_i(k)\quad\text{and}\quad P'_k(i)=Q_k(i),
\]
respectively. For convenience, we define $Q_k(i)=0$ if $\Q_i=\emptyset$ and $P_i(k)=0$ if $\S_k=\emptyset$. 
\par
In order to give explicit expressions for these numbers, it is convenient to use \emph{$q^2$-analogs of binomial coefficients}, which are defined by
\[
{n\brack k}=\prod_{i=0}^{k-1}\frac{q^{2n}-q^{2i}}{q^{2k}-q^{2i}}
\]
for integral $n$ and $k$. These numbers satisfy the following identities
\begin{equation}
{n\brack k}=q^{2k}{n-1\brack k}+{n-1\brack k-1}={n-1\brack k}+q^{2(n-k)}{n-1\brack k-1}.   \label{eqn:Pascal_triangle}
\end{equation}
We also need the following numbers, which can be derived from generalised Krawtchouk polynomials~\cite{DelGoe1975},~\cite{Del1976}. We define 
\[
F^{(m)}_r(s)=\sum_{j=0}^r(-1)^{r-j}q^{(r-j)(r-j-1)}{n-j\brack n-r}{n-s\brack j}\, c^j,
\]
where
\[
n=\left\lfloor m/2\right\rfloor,\quad\text{and}\quad c=q^{m(m-1)/(2n)},
\]
whenever this expression is defined and let $F^{(m)}_r(s)=0$ otherwise. Equivalently, these numbers can be defined via the $n+1$ equations 
\begin{equation}
\sum_{r=0}^j{n-r\brack n-j} F^{(m)}_r(s)={n-s\brack j}c^j\quad\text{for $j\in\{0,1,\dots,n\}$}   \label{eqn:ev_transform}
\end{equation}
(see~\cite[(29)]{DelGoe1975}).
\par
The following theorem contains explicit expressions for the $Q$-numbers of $\Q(m,q)$. For odd $q$, this is follows from~\cite[Theorem~2.2]{Sch2015}. For even $q$, the result is new.
\begin{theorem}
\label{thm:Q_numbers}
The $Q$-numbers of the association scheme of quadratic forms $\Q(m,q)$ are as follows. We have $Q_{0,1}(i)=1$ and $Q_k(0,1)=\mu_k$ for all $i,k\in I$ and the other $Q$-numbers are given by
\begin{align*}
Q_{2r+1}(2s+1)&=-q^{2r}F^{(m-1)}_r(s),\\
Q_{2r+1}(2s,\t)&=-q^{2r}F^{(m-1)}_r(s-1)+\tau\, q^{m-s+2r}F^{(m-2)}_r(s-1),\\
Q_{2r,\e}(2s+1)&=\alpha_\e\, q^{2r}F^{(m-1)}_r(s)+\e\,\beta_r\,F^{(m)}_r(s),\\
Q_{2r,\e}(2s,\t)&=\alpha_\e\,[q^{2r}F^{(m-1)}_r(s-1)-\tau\, q^{m-s+2r-2}F^{(m-2)}_{r-1}(s-1)]+\e\,\beta_r\,F^{(m)}_r(s),
\end{align*}
where $\alpha_\e$ and $\beta_r$ are given in Proposition~\ref{pro:valencies_multiplicities}.
\end{theorem}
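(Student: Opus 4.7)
The approach is to evaluate the character sum~\eqref{eqn:Q_char_sum} directly using canonical representatives from Proposition~\ref{pro:canonical_quad_forms}. Fix $[A]\in\Q_i$ so that $A$ is supported on its top-left $r\times r$ block $\widetilde{A}$, with $r=\rank(A)$. For $B\in\S$ written in $(r,m-r)$-block form $B=\begin{pmatrix}B_{11}&B_{12}\\B_{12}^T&B_{22}\end{pmatrix}$, the trace $\tr(AB)=\tr(\widetilde{A}B_{11})$ depends only on $B_{11}$. Summing first over the completions $(B_{12},B_{22})$ gives
\[
Q_k(i)=\sum_{B_{11}\in\S(r,q)}\chi(\tr(\widetilde{A}B_{11}))\,N_k(B_{11}),
\]
where $N_k(B_{11})$ is the number of completions yielding a form in $\S_k$; this count depends only on the rank and type of $B_{11}$ as an $r\times r$ form and is expressible in $q^2$-binomials via Proposition~\ref{pro:valencies_multiplicities}.

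To evaluate the inner character sum I would stratify the summands by the rank (and type) of $B_{11}$ and introduce cumulative sums $T_j(\widetilde{A})=\sum_{\rank(B_{11})\le 2j}\chi(\tr(\widetilde{A}B_{11}))$. Parameterising low-rank $B_{11}$ through their radicals, these cumulative sums reduce to $q^2$-binomial counts matching the right-hand side of the defining relation~\eqref{eqn:ev_transform}, so Möbius inversion produces the exact-rank sums in terms of $F^{(m)}_r(s)$. The three different superscripts $F^{(m)}$, $F^{(m-1)}$, $F^{(m-2)}$ appear because, in even characteristic, $B_{11}$ decomposes uniquely as $B_{11}=B'_{11}+D_{11}$ into an alternating part and a diagonal part, and the character factors multiplicatively across this decomposition. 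The alternating part yields the $\e\,\beta_r\,F^{(m)}_r(s)$ contribution (this is precisely a Delsarte-Goethals computation for the alternating bilinear forms scheme), while the diagonal part is an $r$-dimensional exponential sum over $\FF_q$ whose value depends on the signature of $\widetilde{A}$, producing the $F^{(m-1)}_r$-terms and, in the elliptic-on-elliptic pairing, the additional $F^{(m-2)}_{r-1}$-correction.

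The main obstacle is the evaluation of this diagonal sum for even $q$ when $\widetilde{A}$ carries the elliptic canonical block $x_{r-1}^2+x_{r-1}x_r+\alpha x_r^2$ with $\Tr(\alpha)=1$. Here one must evaluate the two-variable exponential sum $\sum_{x,y\in\FF_q}\chi(x^2+xy+\alpha y^2)$ and its twists by linear forms through an Artin-Schreier computation in characteristic two, from which the precise coefficients $\t\,q^{m-s+2r}$ and $\t\,q^{m-s+2r-2}$ of the theorem emerge. For odd $q$ this step degenerates to a classical Gauss-sum evaluation involving the quadratic character, and the two non-alternating $F$-pieces can be fused, recovering~\cite[Theorem~2.2]{Sch2015}. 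The remaining assembly of the four cases with the correct signs $\alpha_\e$, $\beta_r$, $\t$ is then bookkeeping, driven by the $q^2$-Pascal identities~\eqref{eqn:Pascal_triangle} that reconcile the cumulative sums with~\eqref{eqn:ev_transform}.
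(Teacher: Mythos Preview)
Your opening move---reducing to the top-left $r\times r$ block $\widetilde A$ and introducing the completion counts $N_k(B_{11})$---is valid, and it is genuinely different from the paper's route. The paper never reduces to the support of $A$; instead it peels off one row and column of the \emph{full} matrix $B$ to obtain the recurrences
\[
Q^{(m)}_k(2s{+}1)=Q^{(m)}_k(2s,1)-q^{m-s}Q^{(m-1)}_{k-1}(2s,1),\qquad
Q^{(m)}_k(2s,\t)=Q^{(m)}_k(2s{-}1)+\t\,q^{m-s}Q^{(m-1)}_{k-1}(2s{-}1)
\]
(Lemma~\ref{lem:recurrence_Q}), and then solves these with the Delsarte--Goethals input of Proposition~\ref{pro:Q_alt}. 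The superscripts $m$, $m-1$, $m-2$ in the final formulas are literally the ambient dimensions appearing in this dimension recursion.

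This is where your proposal has a real gap. Your cumulative sums $T_j(\widetilde A)=\sum_{\rank(B_{11})\le 2j}\chi(\tr(\widetilde A B_{11}))$ live entirely in the $r\times r$ world, where $r=\rank(A)\in\{2s,2s+1\}$; the relation~\eqref{eqn:ev_transform} applied there would produce $F^{(r)}$--type numbers, not $F^{(m)}$, $F^{(m-1)}$, $F^{(m-2)}$. The dependence on $m$ sits exclusively in the weights $N_k(B_{11})$, and you give no mechanism by which the convolution of $T_j$ with $N_k$ would output Krawtchouk polynomials in ambient dimensions $m$, $m-1$, $m-2$. Your stated explanation---the alternating/diagonal split $B_{11}=B'_{11}+D_{11}$---cannot supply this: that decomposition is again an $r$-dimensional artefact with no reference to $m$, and while the character $\chi(\tr(\widetilde A B_{11}))$ does factor across it, the weight $N_k(B_{11})$ depends on the rank of the \emph{sum} $B'_{11}+D_{11}$ and does not factor. (The paper does exploit the alternating forms, but on the full $m$-dimensional space: summing $\chi(\tr(AB))$ over alternating $B\in\S^{(m)}_{2r,1}$ is the Delsarte--Goethals evaluation $F^{(m)}_r(s)$ of Proposition~\ref{pro:Q_alt}, which is why the superscript is $m$.) The Artin--Schreier computation you anticipate does appear in the paper, but inside the dimension-recursion step (the sum $\sum_{y}\chi(a(y^2+y))$ in the proof of~\eqref{eqn:rec_Q_2}), not as a stand-alone diagonal evaluation. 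As written, your argument does not bridge the gap between the $r$-dimensional character sums and the $m$-indexed $F$-functions, and that bridge is the entire content of the theorem.
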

\par
It is well known (and can be easily verified) that the $P$-numbers of $\Q$ can be computed from the $Q$-numbers of $\Q$ via
\[
P_i(k)=\frac{v_i}{\mu_k}Q_k(i).
\]
Proposition~\ref{pro:valencies_multiplicities} then shows that $P_i(k)=Q_k(i)$ for odd $q$ (as it should since the association scheme on $\Q$ is isomorphic to its dual in this case). For even $q$, the $P$-numbers of $\Q$ are given in the following theorem.
\begin{theorem}
\label{thm:P_numbers}
For even $q$, the $P$-numbers of the association scheme of quadratic forms $\Q(m,q)$ are as follows. We have $P_{0,1}(k)=1$ and $P_i(0,1)=v_i$ for all $i,k\in I$ and the other $P$-numbers are given by
\begin{align*}
P_{2s+1}(2r+1)&=-q^{2s}F^{(m-1)}_s(r),\\
2P_{2s,\t}(2r+1)&=q^{2s}F^{(m-1)}_s(r)+\t\, q^s F^{(m)}_s(r),\\
2P_{2s,\t}(2r,1)&=q^s(q^s+\t)F^{(m)}_s(r),\\
2P_{2s,\t}(2r,-1)&=q^{2s}F^{(m-1)}_s(r-1)+\t\,q^sF^{(m)}_s(r),\\
P_{2s+1}(2r,1)&=(q^m-q^{2s})F^{(m)}_s(r),\\
P_{2s+1}(2r,-1)&=-q^{2s}F^{(m-1)}_s(r-1).
\end{align*}
\end{theorem}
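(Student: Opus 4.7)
My plan is to derive Theorem~\ref{thm:P_numbers} directly from Theorem~\ref{thm:Q_numbers} using the identity
\[
P_i(k) = \frac{v_i}{\mu_k}\, Q_k(i)
\]
recalled just before the theorem. This reduces the proof to six algebraic verifications, one per case, with all ingredients supplied by Theorem~\ref{thm:Q_numbers} and Proposition~\ref{pro:valencies_multiplicities}.

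As a first step I would specialise Proposition~\ref{pro:valencies_multiplicities} to even~$q$, where $\alpha_1=0$, $\alpha_{-1}=1$, and $\beta_s=1$. Setting
\[
N_s = \frac{\prod_{i=0}^{2s-1}(q^m-q^i)}{\prod_{i=0}^{s-1}(q^{2s}-q^{2i})},
\]
this yields
\[
v_{2s,\tau} = \tfrac{1}{2}(q^s+\tau)N_s, \quad \mu_{2s,1}=q^{-s}N_s, \quad \mu_{2s,-1}=(q^s-q^{-s})N_s,
\]
and $v_{2s+1}=\mu_{2s+1}=(q^m-q^{2s})N_s/q^s$. The unusually clean shape of $\mu_{2s,1}$ explains the common prefactor $q^s(q^s+\tau)$ seen in $P_{2s,\tau}(2r,1)$, and the fact that $\mu_{2s,-1}$ splits as $q^s N_s - \mu_{2s,1}$ explains the $F^{(m-1)}_s(r-1)+\tau q^{-s}F^{(m)}_s(r)$ structure appearing in $P_{2s,\tau}(2r,-1)$.

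For each of the six target formulas one substitutes the corresponding $Q_k(i)$ and the ratio $v_i/\mu_k$ and cancels the common $N$-factors. The odd/odd case reduces to a reciprocity of the form
\[
q^{-2s}\mu_{2s+1}\, F^{(m-1)}_r(s) \;=\; q^{-2r}\mu_{2r+1}\, F^{(m-1)}_s(r),
\]
and the odd/even case $P_{2s+1}(2r,1)$ reduces to an analogous reciprocity involving $F^{(m)}_r(s)$ and $F^{(m)}_s(r)$ with weights $\mu_{2s,1}$ and $\mu_{2r,1}$. Such reciprocities can be proved from the defining triangular system~\eqref{eqn:ev_transform}: writing both sides in the basis $\{{n-s\brack j}c^j\}_{j\ge 0}$ and equating coefficients via the $q^2$-Pascal identities~\eqref{eqn:Pascal_triangle} gives the identity. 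Structurally, they reflect the self-duality of the alternating bilinear forms association scheme, whose eigenvalues are (up to normalisation) the $F^{(m)}_r(s)$.

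The main obstacle is the two cases with both $i$ and $k$ of even rank. There $Q_{2r,\epsilon}(2s,\tau)$ mixes $F^{(m-1)}_r(s-1)$ with $F^{(m-2)}_{r-1}(s-1)$, while the target $P_{2s,\tau}(2r,\pm 1)$ is expressed in $F^{(m-1)}_s(r), F^{(m-1)}_s(r-1)$, and $F^{(m)}_s(r)$. Bridging these requires combining the $(m-1)$- and $(m-2)$-reciprocities with shift identities relating $F^{(m)}_s(r)$ to $F^{(m-1)}_{s-1}(r)$ and $F^{(m-2)}_{s-1}(r-1)$; these again follow from~\eqref{eqn:ev_transform} by subtracting consecutive instances and applying~\eqref{eqn:Pascal_triangle}. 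Once these shift-and-reciprocity identities are in hand, a careful bookkeeping of the $\epsilon$- and $\tau$-contributions, split by the parity of $k$, gives each of the six claimed formulas.
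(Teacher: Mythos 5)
Your starting identity $P_i(k)=\frac{v_i}{\mu_k}\,Q_k(i)$ is legitimate (the paper records it just before the theorem), so your route --- substitute the explicit $Q$-numbers of Theorem~\ref{thm:Q_numbers} and the valencies of Proposition~\ref{pro:valencies_multiplicities}, then verify six identities among the numbers $F^{(m)}_r(s)$ --- is genuinely different from the paper's. The paper instead converts the $Q$-number sums of Proposition~\ref{pro:Q_numbers_sums} into the $P$-number sums of Proposition~\ref{pro:P_numbers_sums} by character-sum manipulations together with the orthogonality relation~\eqref{eqn:orthogonality_F}, notes that this system determines the $P$-numbers uniquely, and then only has to check that the stated formulas satisfy it, which needs just two Pascal-type identities (\eqref{eqn:F_identity} and $F^{(m+1)}_s(r)=q^{2s}F^{(m)}_s(r)+(q^m-q^{2s-2})F^{(m)}_{s-1}(r)$). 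Your reductions of the clean cases are correct: for even $q$ one has $\mu_{2r,1}=F^{(m)}_r(0)$ and $q^{-2r}\mu_{2r+1}=(q^m-1)F^{(m-1)}_r(0)$, so $P_{2s+1}(2r+1)$, $P_{2s+1}(2r,1)$ and $P_{2s,\tau}(2r,1)$ reduce exactly to the valency-weighted reciprocity $F_r(0)F_s(r)=F_s(0)F_r(s)$ (in dimensions $m-1$ and $m$), which is the self-duality of the alternating bilinear forms scheme.

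The gap is in the remaining cases, and that is where all the work lies. It is not only the two even--even cases that mix families: $Q_{2r+1}(2s,\tau)$ mixes $F^{(m-1)}_r(s-1)$ with $F^{(m-2)}_r(s-1)$, and $Q_{2r,-1}(2s+1)=q^{2r}F^{(m-1)}_r(s)-F^{(m)}_r(s)$, so four of the six formulas need mixed identities. For instance, $2P_{2s,\tau}(2r+1)$ forces, after separating the $\tau$-free and $\tau$-linear parts, identities such as
\[
q^{r}N_s\bigl(q^{m-s+2r}F^{(m-2)}_r(s-1)-q^{s+2r}F^{(m-1)}_r(s-1)\bigr)
=q^{2s}\,(q^m-q^{2r})\,N_r\,F^{(m-1)}_s(r),
\]
in your notation, and analogous ones for $P_{2s,\tau}(2r,-1)$ and $P_{2s+1}(2r,-1)$. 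These identities are true (they are forced by the theorem), but you never derive them; the assertion that they "follow from~\eqref{eqn:ev_transform} by equating coefficients via~\eqref{eqn:Pascal_triangle}" is not a proof, and in particular the argument-swapping (reciprocity) ingredient is not a Pascal manipulation --- it is equivalent to the orthogonality/self-duality relation~\eqref{eqn:orthogonality_F}, which has to be invoked or proved separately. Until the four mixed cases are actually carried out (reciprocity in dimensions $m$, $m-1$, $m-2$ combined with dimension-shift identities of the type~\eqref{eqn:F_identity} and the valency bookkeeping), your write-up is a plausible plan rather than a complete argument; the paper's detour through Proposition~\ref{pro:P_numbers_sums} is precisely what allows it to avoid these mixed reciprocity computations.
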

\par
In the remainder of this section, we shall prove Theorems~\ref{thm:Q_numbers} and~\ref{thm:P_numbers}. We begin with the following result, which is essentially known. 
\begin{proposition}
\label{pro:Q_alt}
Let $\alpha_\e$ and $\beta_r$ be as in Proposition~\ref{pro:valencies_multiplicities}. The $Q$-numbers of the association scheme $\Q(m,q)$ satisfy
\begin{align*}
\beta_rF^{(m)}_r(s)&=\alpha_{-1}Q_{2r,1}(2s,\t)-\alpha_1Q_{2r,-1}(2s,\t)\\
&=\alpha_{-1}Q_{2r,1}(2s+1)-\alpha_1Q_{2r,-1}(2s+1).
\end{align*}
\end{proposition}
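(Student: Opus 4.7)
The plan is to interpret the alternating combination on the right-hand side as a character sum over alternating bilinear forms of rank $2r$, and then invoke the Delsarte--Goethals theory of the alternating-forms scheme: by the recursion~\eqref{eqn:ev_transform} the polynomial $F^{(m)}_r(s)$ is precisely the $(r,s)$ $Q$-number of the association scheme of alternating bilinear forms on $V(m,q)$ with $n=\lfloor m/2\rfloor$ classes, see~\cite{DelGoe1975}. I handle the parities of $q$ separately.

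For even $q$ we have $\alpha_{-1}=1$, $\alpha_1=0$, and $\beta_r=1$, so the claim reduces to $F^{(m)}_r(s)=Q_{2r,1}(i)$ for each relevant $i$. By Proposition~\ref{pro:canonical_sym_bil_forms}, the set $\S_{2r,1}$ is exactly the collection of alternating bilinear forms of rank $2r$. For any representative matrix $A$ of the coset attached to $Q\in\Q_i$ and any alternating $B$ (that is, $B$ symmetric with zero diagonal), one computes
\[
\tr(AB)=\sum_{i\ne j}A_{ij}B_{ij}=\sum_{i<j}(A_{ij}+A_{ji})B_{ij}=\sum_{i<j}(S_Q)_{ij}B_{ij},
\]
which is the canonical nondegenerate pairing between alternating matrices in characteristic $2$ (the ordinary trace pairing being degenerate there). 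In particular this expression depends only on the associated symmetric bilinear form $S_Q$, whose rank equals $2s$ in every case, since the diagonal terms $x_r^2$ in Proposition~\ref{pro:canonical_quad_forms} and the diagonal terms inside $Q_0$ vanish when one passes to $S_Q$. Therefore $Q_{2r,1}(i)$ is a $Q$-number of the alternating-forms scheme and by Delsarte--Goethals equals $F^{(m)}_r(s)$, finishing the even case.

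For odd $q$ we have $\alpha_{-1}=\alpha_1=\tfrac12$ and $\beta_r=\tfrac12 q^r$, so the claim becomes $q^rF^{(m)}_r(s)=Q_{2r,1}(i)-Q_{2r,-1}(i)$. Via the isomorphism~\eqref{eqn:S_from_Q} the right-hand side equals $\sum_{B}\epsilon(B)\chi(\tr(AB))$, where the sum runs over symmetric bilinear forms of rank exactly $2r$ and $\epsilon(B)\in\{\pm1\}$ is the type of $B$. In odd characteristic the type of a nondegenerate symmetric form is detected by the quadratic character of its discriminant, so this is the quadratic twist of a character sum that is evaluated in~\cite[Proposition~2.3]{Sch2015} by standard Gauss sum identities to yield $q^rF^{(m)}_r(s)$; this case therefore reduces to the computation already carried out there.

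The main technical obstacle is the bookkeeping in the even case: confirming that $\tr(AB)$ descends to the canonical alternating-form pairing applied to $S_Q$ in characteristic $2$, and checking that the rank of $S_Q$ is $2s$ uniformly, so that the same value of $s$ appears on the right-hand side for each of the three relations $(2s,1)$, $(2s,-1)$, and $(2s+1)$. Once these points are settled, both equalities in the proposition follow simultaneously.
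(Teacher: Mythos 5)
Your proposal is correct and follows essentially the same route as the paper: for even $q$ the identity reduces to $Q_{2r,1}(i)=F^{(m)}_r(s)$, which both you and the author obtain by recognising $\S_{2r,1}$ as the alternating forms of rank $2r$ and reducing the character sum to the Delsarte--Goethals evaluation for the alternating-forms scheme (you do this via the pairing with $S_Q$, the paper via explicit block-diagonal representatives, but the content is identical), while the odd-$q$ case is deferred to \cite{Sch2015} in both treatments. No substantive difference or gap.
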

\begin{proof}
For odd $q$, the statement in the lemma can be deduced from~\cite[Lemma~6.3]{Sch2015}, so assume that $q$ is even. Then $\S_{2r,1}$ is the set of alternating bilinear forms of rank~$2r$ on $V$. By Proposition~\ref{pro:canonical_quad_forms}, every quadratic form in $\Q_{2s+1}$ can be represented by an $m\times m$ block diagonal matrix with the block $(1)$ in the top left corner, followed by $s$ copies of
\begin{equation}
\begin{bmatrix}
0 & 1\\
0 & 0
\end{bmatrix}.   \label{eqn:antidiagonal_matrix}
\end{equation}
It can be shown using Proposition~\ref{pro:canonical_quad_forms} that the quadratic form $x^2+xy+\lambda y^2$ in $\Q(2,q)$ is of type $(-1)^{\Tr(\lambda)}$. Hence a quadratic form in $\Q_{2s,\t}$ can be represented by the zero matrix or by an $m\times m$ block diagonal matrix with the block
\begin{equation}
\begin{bmatrix}
\lambda & 1\\
0       & 1
\end{bmatrix}\label{eqn:top_left_block}
\end{equation}
in the top left corner, followed by $s-1$ copies of~\eqref{eqn:antidiagonal_matrix}, where $(-1)^{\Tr(\lambda)}=\t$. It follows from these observations that, for $k=(2r,1)$ the character sums~\eqref{eqn:Q_char_sum} have been evaluated by Delsarte and Goethals~\cite[Appendix]{DelGoe1975}, which gives 
\[
Q_{2r,1}(2s,\t)=Q_{2r,1}(2s+1)=F^{(m)}_r(s),
\]
as required.
\end{proof}
\par
In what follows we write, for every $i\in I$,
\[
Q_{2r}(i)=Q_{2r,1}(i)+Q_{2r,-1}(i).
\]
We shall also write $Q^{(m)}_k(i)$ for $Q_k(i)$ and $\S_k^{(m)}$ for $\S_k$ whenever we need to indicate dependence on $m$.
\par
We have the following recurrences for the $Q$-numbers.
\begin{lemma}
\label{lem:recurrence_Q}
For $k\ge 1$ and $s\ge 0$, we have
\begin{equation}
Q^{(m)}_k(2s+1)=Q^{(m)}_k(2s,1)-q^{m-s}\,Q^{(m-1)}_{k-1}(2s,1)   \label{eqn:rec_Q_1}
\end{equation}
and for $k\ge 1$ and $s\ge 1$, we have
\begin{equation}
Q^{(m)}_k(2s,\t)=Q^{(m)}_k(2s-1)+\tau\,q^{m-s}\,Q^{(m-1)}_{k-1}(2s-1)   \label{eqn:rec_Q_2}.
\end{equation}
\end{lemma}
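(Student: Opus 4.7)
The plan is to prove both recurrences by the same strategy: choose canonical orbit representatives $A$ and $A'$ so that $A-A'$ is supported in the last row and column of the matrix, then decompose the defining character sums according to the block structure of $B\in\S^{(m)}$.

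For the first recurrence, take $A = \mathrm{diag}(\tilde A, 1)$, where $\tilde A$ is the representative of $\Q^{(m-1)}_{2s,1}$ from Proposition~\ref{pro:canonical_quad_forms} embedded in the top-left $(m-1)\times(m-1)$ block, the final $1$ in position $(m,m)$ contributing the square $x_m^2$ that raises the rank from $2s$ to $2s+1$. Set $A' = \mathrm{diag}(\tilde A, 0)$, a representative of $\Q^{(m)}_{2s,1}$. Writing $B\in\S^{(m)}$ in block form
\[
B = \begin{pmatrix} \tilde B & b\\ b^T & c\end{pmatrix},
\]
a direct computation gives $\tr(AB) = \tr(\tilde A\tilde B) + c$ and $\tr(A'B) = \tr(\tilde A\tilde B)$, whence
\[
Q^{(m)}_k(2s+1) - Q^{(m)}_k(2s,1) = \sum_{\tilde B}\chi(\tr(\tilde A\tilde B))\,N(\tilde B),\qquad N(\tilde B) = \sum_{\substack{b,\,c:\\ B\in\S^{(m)}_k}}(\chi(c)-1).
\]
The first recurrence then reduces to the pointwise identity $N(\tilde B) = -q^{m-s}\,\mathbf{1}[\rank\tilde B = k-1]$.

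For the second recurrence, I apply the analogous setup with $A = A' + D$, where $A'$ is a representative of $\Q^{(m)}_{2s-1}$ and $D$ is a matrix supported in the last two rows and columns that encodes the type parameter~$\t$. In even characteristic one may take $A' = \mathrm{diag}(\tilde A', 0)$ for the representative $\tilde A'$ of $\Q^{(m-1)}_{2s-1}$ having its defining square at position $m-1$, and $D = E_{m-1,m} + \alpha E_{mm}$ with $\alpha\in\FF_q$ satisfying $\Tr(\alpha) = (1-\t)/2 \bmod 2$ (cf.\ the type criterion in the proof of Proposition~\ref{pro:Q_alt}); in odd characteristic a parallel choice using the fixed nonsquare $\beta$ works. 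The analogous block computation gives $\tr(AB) - \tr(A'B) = b_{m-1} + \alpha c$, and the second recurrence reduces to a companion identity $N'(\tilde B) = \t\,q^{m-s}\,\mathbf{1}[\rank\tilde B = k-1]$.

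The main obstacle is the evaluation of $N(\tilde B)$ and $N'(\tilde B)$, which requires a careful case analysis of the rank and type of $B$ in terms of $\tilde B, b, c$. For $c\ne 0$, the Schur-complement identity $\rank B = 1 + \rank(\tilde B - c^{-1}bb^T)$ handles the rank cleanly; for $c = 0$ one splits by whether $b\in\operatorname{col}(\tilde B)$, and when $b = \tilde B u$ one further distinguishes whether the well-defined scalar $u^T\tilde B u$ vanishes. In even characteristic the type of $B\in\S^{(m)}_{2r,\t'}$ is detected by whether its diagonal is zero, so extensions with $c\ne 0$ cannot lie in the alternating stratum $\S^{(m)}_{2r,+1}$; in odd characteristic the type is controlled by the quadratic character of the determinant, and the sum over $c$ collapses via a standard Gauss-sum identity which supplies the sign $\t$ in the second recurrence. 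In every subcase the count of admissible $b$ is expressible through the valencies of Proposition~\ref{pro:valencies_multiplicities}, and the contributions from $\tilde B$ of rank $\ne k-1$ cancel either by character orthogonality $\sum_{c\in\FF_q}\chi(c) = 0$ or because the admissible counts become independent of $c$.
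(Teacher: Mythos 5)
The key reduction in your argument is false. With $\tilde B$ the principal $(m-1)\times(m-1)$ block, the quantity $N(\tilde B)=\sum_{(b,c):\,B\in\S^{(m)}_k}(\chi(c)-1)$ is \emph{not} supported on $\rank\tilde B=k-1$, and on that stratum it does not equal $-q^{m-s}$: a bordered symmetric matrix has $\rank B\in\{\rank\tilde B,\rank\tilde B+1,\rank\tilde B+2\}$, so $N(\tilde B)$ is generically nonzero for $\rank\tilde B\in\{k-2,k-1,k\}$. Concretely, if $\rank\tilde B=k-2$ then every border with $b\notin\operatorname{col}(\tilde B)$ and arbitrary $c$ gives $\rank B=k$, so $N(\tilde B)=-q\,(q^{m-1}-q^{k-2})\neq 0$ (character orthogonality kills $\sum_c\chi(c)$ but not the $-1$, contrary to your final cancellation claim); and if $\rank\tilde B=k$ with, say, $q$ even and $\tilde B$ non-alternating, the admissible borders are $b=\tilde Bu$, $c=u^{T}\tilde Bu$, giving $N(\tilde B)=-q^{k}\neq 0$. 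A minimal check: $m=2$, $q=2$, $s=0$, $k=1$ gives $N((0))=N((1))=-2$, whereas your pointwise identity predicts $-4$ and $0$; the recurrence holds only because $-2-2=-4$ after summing against the weights $\chi(\tr(\tilde A\tilde B))$. Thus what your decomposition actually requires is a weighted-sum identity, i.e.\ a nontrivial linear relation among the $Q^{(m-1)}$-numbers of ranks $k-2$, $k-1$, $k$ (and both types), which is essentially as hard as the recurrence itself, so the proof as proposed does not close. The same objection applies to the companion identity $N'(\tilde B)=\t\,q^{m-s}\mathbf{1}[\rank\tilde B=k-1]$; moreover the sign $\t$ cannot appear in such a pointwise statement, since it is a property of $A$, not of $\tilde B$.

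The paper's proof avoids this by never fixing the principal submatrix: it sums over $B$ of rank exactly $k$ and, on the part with nonzero corner entry $a$, reparametrizes $B$ by $(a,u,C)$ where $C=B'-a^{-1}uu^{T}$ is the Schur complement ranging over $\S^{(m-1)}_{k-1}$. The crucial point is the factorization $\ang{A',B'}=\ang{A',C}\,\ang{A',a^{-1}uu^{T}}$, after which the sums over $u$ and $a$ are explicit Gauss-type character sums ($q^{m-s-1}$ and, in the second recurrence, an Artin--Schreier sum $\sum_y\chi(a(y^2+y))$ which produces the sign $\t=(-1)^{\Tr(\lambda)}$), and the strata of ranks $k$ and $k-2$ never enter. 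For the second recurrence there is an additional layer you do not account for: the part of the sum with vanishing corner entry does not disappear trivially but requires a further $2\times2$ corner-block analysis with Schur complements of rank $k-2$, shown to contribute $0$ via $\sum_c\chi(c)=0$. If you want to salvage your setup, you must either prove the weighted identity $\sum_{\tilde B}\chi(\tr(\tilde A\tilde B))N(\tilde B)=-q^{m-s}Q^{(m-1)}_{k-1}(2s,1)$ directly (not pointwise), or switch to the Schur-complement parametrization.
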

\begin{proof}
For odd $q$, the lemma can be deduced from~\cite[Lemma~6.1]{Sch2015}, so henceforth we assume that $q$ is even.
\par
To prove the identity~\eqref{eqn:rec_Q_1}, fix an integer $s$ with $0\le s\le (m-1)/2$ and let $A$ be the $m\times m$ block diagonal matrix with the block $(1)$ in the top left corner, followed by $s$ copies of~\eqref{eqn:antidiagonal_matrix}. Then $A$ is a matrix of a quadratic form of rank $2s+1$. Let $A'$ be the $(m-1)\times (m-1)$ matrix obtained from~$A$ by deleting the first row and the first column. Then we have
\begin{align}
Q^{(m)}_k(2s,1)-Q^{(m)}_k(2s+1)&=\sum_{B\in\S^{(m)}_k}(\ang{A',B'}-\ang{A,B})   \nonumber\\
&=\sum_{B\in\S^{(m)}_k}\ang{A',B'}(1-\chi(a)),   \label{eqn:diff_Q_1}
\end{align}
where $\chi$ is the nontrivial character of $(\FF_q,+)$ used to define the pairing in~\eqref{eqn:inner_product} and we write $B$ as
\begin{equation}
B=\begin{bmatrix}
a & u^T\\
u & B'
\end{bmatrix}   \label{eqn:matrix_B}
\end{equation}
for some $a\in\FF_q$, some $u\in\FF_q^{m-1}$ and some $(m-1)\times (m-1)$ matrix $B'$ over~$\FF_q$. The summand in~\eqref{eqn:diff_Q_1} is zero for $a=0$, so assume that $a\ne 0$. Writing
\[
L=\begin{bmatrix}
1 & -a^{-1}u^T\\
0 & I
\end{bmatrix},
\]
we have
\[
L^TBL=\begin{bmatrix}
a & 0\\
0 & C
\end{bmatrix},
\quad\text{where}\quad
C=B'-a^{-1}uu^T.
\]
As $a$ ranges over $\FF_q^*$ and $u$ ranges over $\FF_q^{m-1}$ and $C$ ranges over $\S^{(m-1)}_{k-1}$, the matrix~$B$ in~\eqref{eqn:matrix_B} ranges over $\S^{(m)}_k$ with the constraint $a\ne 0$. Therefore the sum~\eqref{eqn:diff_Q_1} is
\[
\sum_{a\in\FF_q^*}\sum_{u\in\FF_q^{m-1}}\sum_{C\in\S^{(m-1)}_{k-1}}\ang{A',C}\ang{A',a^{-1}uu^T}(1-\chi(a)).
\]
We have
\[
\sum_{C\in\S^{(m-1)}_{k-1}}\ang{A',C}=Q^{(m-1)}_{k-1}(2s),
\]
and
\begin{align*}
\sum_{u\in\FF_q^{m-1}}\ang{A',a^{-1}uu^T}&=q^{m-2s-1}\sum_{u_1,\dots,u_{2s}\in\FF_q}\chi\bigg(a^{-1}\bigg(\sum_{i=1}^su_{2i-1}u_{2i}\bigg)\bigg)\\
&=q^{m-2s-1}\bigg(\sum_{u,v\in\FF_q}\chi(uv)\bigg)^s\\
&=q^{m-s-1}
\end{align*}
for every $a\in\FF_q^*$, and
\[
\sum_{a\in\FF_q^*}(1-\chi(a))=q.
\]
Substitute everything into~\eqref{eqn:diff_Q_1} to obtain the first identity~\eqref{eqn:rec_Q_1} in the lemma.
\par
To prove the identity~\eqref{eqn:rec_Q_2}, fix an integer $s$ with $1\le s\le m/2$ and $\tau\in\{-1,1\}$. Let $\lambda\in\FF_q$ be such that $(-1)^{\Tr(\lambda)}=\t$ and let $A$ be the $m\times m$ block diagonal matrix with the block~\eqref{eqn:top_left_block} in the top left corner, followed by $s-1$ copies of~\eqref{eqn:antidiagonal_matrix}. Then $A$ is a matrix of a quadratic form of rank $2s$ and type $\tau$. Let $A'$ be the $(m-1)\times (m-1)$ matrix obtained from~$A$ by deleting the first row and the first column. Then we have
\begin{equation}
Q^{(m)}_k(2s-1)-Q^{(m)}_k(2s,\tau)=\sum_{B\in\S^{(m)}_k}\ang{A',B'}(1-\chi(a\lambda+u_1)),   \label{eqn:diff_Q_2}
\end{equation}
where we write $B$ as~\eqref{eqn:matrix_B} and where $u=(u_1,\dots,u_{m-1})^T$. We split the summation in~\eqref{eqn:diff_Q_2} into two parts: the sum $S_1$ is over all $B$ with $a\ne 0$ and the sum $S_2$ is over all $B$ with $a=0$. Similarly as in the proof of the first identity~\eqref{eqn:rec_Q_1}, we have
\begin{align}
S_1&=\sum_{a\in\FF_q^*}\sum_{u\in\FF_q^{m-1}}\sum_{C\in\S^{(m-1)}_{k-1}}\ang{A',C}\ang{A',a^{-1}uu^T}(1-\chi(a\lambda+u_1))   \nonumber\\
&=Q^{(m-1)}_{k-1}(2s-1)\sum_{a\in\FF_q^*}\sum_{u\in\FF_q^{m-1}}\ang{A',a^{-1}uu^T}(1-\chi(a\lambda)\chi(u_1)).   \label{eqn:sum_S1}
\end{align}
For every $a\in\FF_q^*$, we have
\begin{align*}
\sum_{u\in\FF_q^{m-1}}\ang{A',a^{-1}uu^T}&=q^{m-2s}\sum_{u_1,\dots,u_{2s-1}\in\FF_q}\chi\bigg(a^{-1}\bigg(u_1^2+\sum_{i=1}^{s-1}u_{2i}u_{2i+1}\bigg)\bigg)\\
&=q^{m-2s}\bigg(\sum_{u,v\in\FF_q}\chi(uv)\bigg)^{s-1}\sum_{w\in\FF_q}\chi(w)\\
&=0
\end{align*}
since the inner sum is zero, and similarly,
\begin{align*}
\sum_{u\in\FF_q^{m-1}}\ang{A',a^{-1}uu^T}\chi(u_1)&=q^{m-2s}\bigg(\sum_{u,v\in\FF_q}\chi(uv)\bigg)^{s-1}\sum_{w\in\FF_q}\chi(a^{-1}w^2+w)\\
&=q^{m-s-1}\sum_{y\in\FF_q}\chi(a(y^2+y)),
\end{align*}
by applying the substitution $w=ay$. The mapping is $y\mapsto y^2+y$ is $2$-to-$1$ and its image is the set of elements in $\FF_q$ whose absolute trace ist zero. Since~$\chi$ is nontrivial, there exists $\theta\in\FF_q^*$ such that
\[
\sum_{y\in\FF_q}\chi(a(y^2+y))=\sum_{y\in\FF_q}(-1)^{\Tr(\theta a(y^2+y))},
\]
which equals $q$ if $a=1/\theta$ and equals zero otherwise. Substitute everything into~\eqref{eqn:sum_S1} to obtain
\[
S_1=-\tau\, q^{m-s}\,Q^{(m-1)}_{k-1}(2s-1),
\]
since $\chi(\lambda/\theta)=(-1)^{\Tr(\lambda)}=\tau$. 
\par
We complete the proof by showing that the sum $S_2$, namely the summation in~\eqref{eqn:diff_Q_2} over all $B$ with $a=0$, equals zero. Let $A''$ be the matrix obtained from $A$ by deleting the first two rows and the first two columns. Then we have
\begin{equation}
S_2=\sums{B\in\S^{(m)}_k\\a=0}\ang{A'',B''}\chi(c)(1-\chi(b)),   \label{eqn:sum_S2}
\end{equation}
where we now write
\[
B=\begin{bmatrix}
E & U^T\\
U & B''
\end{bmatrix}
\quad\text{and}\quad
E=\begin{bmatrix}
a & b\\
b & c
\end{bmatrix}
\]
for some $b,c\in\FF_q$, some $(m-2)\times 2$ matrix $U$ and some $(m-2)\times(m-2)$ matrix~$B''$. Henceforth we put $a=0$. For $b=0$, the summand in~\eqref{eqn:sum_S2} equals zero, so we assume that $b$ is nonzero and so $E$ is invertible. Writing 
\[
M=\begin{bmatrix}
I & -E^{-1}U^T\\
0 & I
\end{bmatrix},
\]
we have
\[
M^TBM=\begin{bmatrix}
E & 0\\
0 & D
\end{bmatrix},
\quad\text{where}\quad
D=B''-UE^{-1}U^T.
\]
Then, arguing similarly as before, we obtain
\begin{align*}
S_2&=\sum_{b\in\FF_q^*}\sum_{c\in\FF_q}\sum_{U\in\FF_q^{(m-2)\times 2}}\sum_{D\in\S^{(m-2)}_{k-2}}\ang{A'',D}\ang{A'',UE^{-1}U^T}\chi(c)(1-\chi(b))\\
&=Q^{(m-2)}_{k-2}(2s-2,1)\sum_{b\in\FF_q^*}\sum_{c\in\FF_q}\sum_{U\in\FF_q^{(m-2)\times 2}}\ang{A'',UE^{-1}U^T}\chi(c)(1-\chi(b)).
\end{align*}
There exists an invertible matrix $2\times 2$ matrix $N$ over $\FF_q$ such that $NE^{-1}N^T$ is either the $2\times 2$ identity matrix or
\[
F=\begin{bmatrix}
0 & 1\\
1 & 0
\end{bmatrix},
\]
depending on whether $c\ne 0$ or $c=0$, respectively. It is readily verified that
\begin{align*}
\sum_{U\in\FF_q^{(m-2)\times 2}}\ang{A'',UU^T}&=\sum_{U\in\FF_q^{(m-2)\times 2}}\ang{A'',UFU^T}\\
&=q^{2(m-2s)}\bigg(\sum_{u,v\in\FF_q}\chi(uv)\bigg)^{2s-2}\\
&=q^{2(m-s-1)}.
\end{align*}
Therefore we have
\[
S_2=q^{2(m-s-1)}\,Q^{(m-2)}_{k-2}(2s-2,1)\sum_{b\in\FF_q^*}(1-\chi(b))\sum_{c\in\FF_q}\chi(c)=0,
\]
since the inner sum is zero. This completes the proof of the identity~\eqref{eqn:rec_Q_2}.
\end{proof}
\par
We shall now solve the recurrence relations in Lemma~\ref{lem:recurrence_Q} using the initial values
\begin{align}
Q^{(m)}_{0,1}(i)&=1   \label{eqn:initial_1}
\intertext{for each $i\in I$ with $\Q_i\ne\emptyset$,}
Q^{(m)}_{2r}(0,1)&=\mu^{(m)}_{2r,1}+\mu^{(m)}_{2r,-1},   \label{eqn:initial_2}\\
Q^{(m)}_{2r+1}(0,1)&=\mu^{(m)}_{2r+1}   \label{eqn:initial_3}
\end{align}
for each $r\ge 0$, where we write $\mu^{(m)}_k$ for $\mu_k$. These initial values follow immediately from~\eqref{eqn:Q_char_sum}.
\begin{proposition}
\label{pro:Q_sum}
For $k\ge 1$, the numbers $Q_k(2s+1)$ for $s\ge 0$ and the numbers $Q_k(2s,\tau)$ and $s\ge 1$ satisfy
\begin{align}
Q_{2r+1}(2s+1) &=-q^{2r}F^{(m-1)}_r(s),                                                    \label{eqn:QS1}\\
Q_{2r+1}(2s,\t)&=-q^{2r}F^{(m-1)}_r(s-1)+\t\, q^{m-s+2r}F^{(m-2)}_r(s-1),                  \label{eqn:QS2}\\
Q_{2r}(2s+1)   &=\phantom{-}q^{2r}F^{(m-1)}_r(s),                                          \label{eqn:QS3}\\
Q_{2r}(2s,\t)  &=\phantom{-}q^{2r}F^{(m-1)}_r(s-1)-\t\, q^{m-s+2r-2}F^{(m-2)}_{r-1}(s-1).  \label{eqn:QS4}
\end{align}
\end{proposition}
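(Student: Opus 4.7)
The plan is to verify that the claimed formulas \eqref{eqn:QS1}--\eqref{eqn:QS4} satisfy the recurrences of Lemma~\ref{lem:recurrence_Q} together with the initial values $Q^{(m)}_k(0,1)=\mu^{(m)}_k$, which together uniquely determine all $Q$-numbers. I would proceed by a double induction, the outer on $m$ and the inner on the rank of the argument. The base cases at small $m$ are direct, and the base case of the inner induction (rank~$1$) follows by combining \eqref{eqn:rec_Q_1} at $s=0$ with the explicit values for $\mu^{(m)}_k$ in Proposition~\ref{pro:valencies_multiplicities} and the inductive hypothesis at dimension~$m-1$.

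For the inductive step, the recurrence~\eqref{eqn:rec_Q_2} is the easier of the two. Substituting the claimed formulas for $Q^{(m)}_k(2s,\tau)$, $Q^{(m)}_k(2s-1)$, and (by the inductive hypothesis) $Q^{(m-1)}_{k-1}(2s-1)$, the $\tau$-symmetric parts match directly and the $\tau$-antisymmetric parts match after applying \eqref{eqn:QS1} or \eqref{eqn:QS3} at dimension $m-1$ to rewrite $Q^{(m-1)}_{k-1}(2s-1)$; no identity for the numbers $F^{(m)}_r(s)$ is required at this step.

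The recurrence~\eqref{eqn:rec_Q_1} is the main work. After substituting the claimed formulas and cancelling, the recurrence collapses---for both $k=2r+1$ and $k=2r$---to the single identity
\[
F^{(m)}_r(s)-F^{(m)}_r(s-1)=-q^{2m-2s-1}\,F^{(m-2)}_{r-1}(s-1).
\]
I would prove this as a separate lemma by applying the Pascal-type relation ${n-s+1\brack j}={n-s\brack j}+q^{2(n-s+1-j)}{n-s\brack j-1}$ from \eqref{eqn:Pascal_triangle} inside the defining sum of $F^{(m)}_r(s)$, writing $F^{(m)}_r(s)-F^{(m)}_r(s-1)$ as a single sum, then reindexing $j\mapsto j+1$ and comparing term by term with the definition of $F^{(m-2)}_{r-1}(s-1)$.

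The main technical obstacle is the parity bookkeeping for the constant $c=q^{m(m-1)/(2n)}$ with $n=\floor{m/2}$: since $n$ takes different forms for even and odd $m$, one must verify separately in both parities that $c^{(m)}\cdot q^{2(n-s)}$ simplifies to $q^{2m-2s-1}$ and that $c^{(m)}/c^{(m-2)}=q^2$, so that the reindexed sum matches the definition of $F^{(m-2)}_{r-1}(s-1)$ exactly. Once this identity is in hand, the verification of both recurrences---and hence of \eqref{eqn:QS1}--\eqref{eqn:QS4}---is a routine calculation.
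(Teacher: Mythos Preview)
Your approach is correct and is essentially the same as the paper's: both verify that the claimed formulas satisfy the recurrences of Lemma~\ref{lem:recurrence_Q} with the initial values coming from the valencies, and both reduce the nontrivial step to the same Pascal-type identity for the $F$-numbers (which in the paper appears after composing~\eqref{eqn:rec_Q_1} and~\eqref{eqn:rec_Q_2} into the single two-step recurrence $Q^{(m)}_k(2s+1)=Q^{(m)}_k(2s-1)-cq^{2(n-s+1)}Q^{(m-2)}_{k-2}(2s-1)$). The only organisational difference is that the paper first derives~\eqref{eqn:QS1} and~\eqref{eqn:QS3} via this composed recurrence and then reads off~\eqref{eqn:QS2} and~\eqref{eqn:QS4} from~\eqref{eqn:rec_Q_2}, whereas you verify~\eqref{eqn:rec_Q_2} and~\eqref{eqn:rec_Q_1} separately; note that in your verification of~\eqref{eqn:rec_Q_1} the relevant $F$-identity is applied at level $m-1$ (i.e.\ to $F^{(m-1)}$ and $F^{(m-3)}$), not at level $m$ as written.
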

\begin{proof}
For odd $q$, the statements in the lemma are given by~\cite[Lemma~6.2]{Sch2015}. However we prove the lemma for odd and even $q$ simultaneously. Write 
\[
n=\floor{(m-1)/2}\quad\text{and}\quad c=q^{(m-1)(m-2)/(2n)}.
\]
From~\eqref{eqn:rec_Q_1} with $s=0$ and the initial values~\eqref{eqn:initial_2} and~\eqref{eqn:initial_3} we have
\begin{align*}
Q^{(m)}_{2r}(1)  &=\mu^{(m)}_{2r,1}+\mu^{(m)}_{2r,-1}-q^m\mu^{(m-1)}_{2r-1},\\
Q^{(m)}_{2r+1}(1)&=\mu^{(m)}_{2r+1}-q^m\big[\mu^{(m-1)}_{2r,1}+\mu^{(m-1)}_{2r,-1}\big].
\end{align*}
From Proposition~\ref{pro:valencies_multiplicities} we then find that
\[
Q^{(m)}_{2r}(1)=-Q^{(m)}_{2r+1}(1)=\frac{1}{q^r}\;\frac{\prod\limits_{i=1}^{2r}(q^m-q^i)}{\prod\limits_{i=0}^{r-1}(q^{2r}-q^{2i})},
\]
which we can write as
\[
q^{2r}{n\brack r}\prod_{j=0}^{r-1}(c-q^{2j}).
\]
This latter expression equals $q^{2r}F^{(m-1)}_r(0)$ (see~\cite{DelGoe1975} or~\cite{Sch2015}, for example) and therefore~\eqref{eqn:QS1} and~\eqref{eqn:QS3} hold for $s=0$. Using the initial value~\eqref{eqn:initial_1}, we see that~\eqref{eqn:QS3} also holds for $r=0$. Now substitute~\eqref{eqn:rec_Q_2} into~\eqref{eqn:rec_Q_1} to obtain
\[
Q^{(m)}_k(2s+1)=Q^{(m)}_k(2s-1)-cq^{2(n-s+1)}\,Q^{(m-2)}_{k-2}(2s-1).
\]
Using~\eqref{eqn:Pascal_triangle}, we verify by induction that~\eqref{eqn:QS1} and~\eqref{eqn:QS3} hold for all $r,s\ge 0$. The identities~\eqref{eqn:QS2} and~\eqref{eqn:QS4} then follow~\eqref{eqn:QS1} and~\eqref{eqn:QS3} and the recurrence~\eqref{eqn:rec_Q_2}. 
\end{proof}
\par
Theorem~\ref{thm:Q_numbers} now follows directly from Propositions~\ref{pro:Q_sum} and~\ref{pro:Q_alt}.
\par
We shall now determine the $P$-numbers of $\Q(m,q)$ for even $q$ from the $Q$-numbers and thereby prove Theorem~\ref{thm:P_numbers}. We begin with stating the $Q$-numbers of $\Q(m,q)$ in the following alternative form.
\begin{proposition}
\label{pro:Q_numbers_sums}
The $Q$-numbers $Q_k(i)$ of the association scheme of quadratic forms $\Q(m,q)$ satisfy
\begin{align}
F^{(m+1)}_r(s)&=Q_{2r,1}(2s-1)+Q_{2r,-1}(2s-1)+Q_{2r-1}(2s-1)   \label{eqn:Q_sum_1}\\
&=Q_{2r,1}(2s,\t)+Q_{2r,-1}(2s,\t)+Q_{2r-1}(2s,\t),   \nonumber\\[1ex]
\t\,q^{m-s}F^{(m)}_r(s)&=Q_{2r,1}(2s,\t)+Q_{2r,-1}(2s,\t)+Q_{2r+1}(2s,\t),   \label{eqn:Q_sum_2}\\
0&=Q_{2r,1}(2s+1)+Q_{2r,-1}(2s+1)+Q_{2r+1}(2s+1),   \label{eqn:Q_sum_2b}\\[1ex]
\beta_rF^{(m)}_r(s)&=\alpha_{-1}Q_{2r,1}(2s,\t)-\alpha_1 Q_{2r,-1}(2s,\t)   \label{eqn:Q_sum_3}\\
&=\alpha_{-1}Q_{2r,1}(2s+1)-\alpha_1Q_{2r,-1}(2s+1)   \nonumber
\end{align}
where $\alpha_\e$ and $\beta_r$ are as in Proposition~\ref{pro:valencies_multiplicities}.
\end{proposition}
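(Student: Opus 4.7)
My plan is to treat the four lines of the proposition individually, reducing them to direct computations with the explicit formulas from Proposition~\ref{pro:Q_sum} together with a single auxiliary identity for the numbers $F^{(M)}_r(s)$. First, \eqref{eqn:Q_sum_3} is precisely the statement of Proposition~\ref{pro:Q_alt}, already proved. Second, \eqref{eqn:Q_sum_2b} is immediate because Proposition~\ref{pro:Q_sum} gives $Q_{2r}(2s+1) = q^{2r}F^{(m-1)}_r(s) = -Q_{2r+1}(2s+1)$.

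For \eqref{eqn:Q_sum_1} and \eqref{eqn:Q_sum_2} I would substitute the explicit formulas and simplify. In the second equality of \eqref{eqn:Q_sum_1}, the $\tau$-dependent contributions from $Q_{2r}(2s,\tau)$ and $Q_{2r-1}(2s,\tau)$ cancel exactly, so both sides collapse to $q^{2r}F^{(m-1)}_r(s-1) - q^{2r-2}F^{(m-1)}_{r-1}(s-1)$. Similarly, after cancellation of the non-$\tau$ pieces, \eqref{eqn:Q_sum_2} reduces to $\tau q^{m-s}\bigl[q^{2r}F^{(m-2)}_r(s-1) - q^{2r-2}F^{(m-2)}_{r-1}(s-1)\bigr]$. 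Both identities therefore amount to the single claim
\[
F^{(M)}_r(s) = q^{2r}F^{(M-2)}_r(s-1) - q^{2r-2}F^{(M-2)}_{r-1}(s-1),
\]
applied with $M=m+1$ (for \eqref{eqn:Q_sum_1}) and $M=m$ (for \eqref{eqn:Q_sum_2}), with the usual convention $F^{(M-2)}_{-1}(s-1)=0$.

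To prove this single claim I would invoke the uniqueness of the solution to the triangular system \eqref{eqn:ev_transform}. Set $n=\floor{M/2}$ and $c=q^{M(M-1)/(2n)}$; a short case analysis in both parities of $M$ shows that $n':=\floor{(M-2)/2}$ equals $n-1$ and that the corresponding constant is $c':=c/q^2$. Writing $G_r$ for the right-hand side of the displayed claim, I would compute $\sum_{r=0}^{j}{n-r\brack n-j}G_r$, shift the index by one in the second piece, and apply the Pascal identity \eqref{eqn:Pascal_triangle} in the form ${n-r\brack n-j} - {n-r-1\brack n-j} = q^{2(j-r)}{n-r-1\brack n-j-1}$. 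The two pieces collapse into $q^{2j}\sum_{r=0}^{j}{(n-1)-r\brack (n-1)-j}F^{(M-2)}_r(s-1)$, which by the defining relation \eqref{eqn:ev_transform} for $F^{(M-2)}$ equals $q^{2j}{n-s\brack j}(c/q^2)^j = {n-s\brack j}c^j$. This matches the defining equation for $F^{(M)}_r(s)$, and uniqueness of the triangular system yields $G_r = F^{(M)}_r(s)$.

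The main obstacle is keeping the bookkeeping straight in the verification above: it is essential that $n'=n-1$ and $c'=c/q^2$ hold uniformly in both parities of $M$, and that the Pascal telescoping identifies precisely the $q^2$-binomials on the $(M-2)$-side. Once this calculation is in place, everything else is routine substitution from Proposition~\ref{pro:Q_sum}.
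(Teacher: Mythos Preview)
Your proposal is correct and follows essentially the same route as the paper. The paper's proof also reduces everything to Propositions~\ref{pro:Q_alt} and~\ref{pro:Q_sum} together with the single auxiliary identity
\[
F^{(m+1)}_r(s)=q^{2r}F^{(m-1)}_r(s-1)-q^{2r-2}F^{(m-1)}_{r-1}(s-1),
\]
which is exactly your displayed claim specialised to the two values of $M$ you need; the only cosmetic difference is that the paper asserts this identity follows directly from the Pascal relations~\eqref{eqn:Pascal_triangle}, whereas you verify it via the characterising triangular system~\eqref{eqn:ev_transform}.
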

\begin{proof}
This follows from Propositions~\ref{pro:Q_alt} and~\ref{pro:Q_sum} using the identity
\begin{equation}
F^{(m+1)}_{r}(s)=q^{2r}F^{(m-1)}_{r}(s-1)-q^{2r-2}F^{(m-1)}_{r-1}(s-1),   \label{eqn:F_identity}
\end{equation}
which can be proved using~\eqref{eqn:Pascal_triangle}.
\end{proof}
\par
We now use Proposition~\ref{pro:Q_numbers_sums} to prove the following counterpart of Proposition~\ref{pro:Q_numbers_sums} for the $P$-numbers.
\begin{proposition}
\label{pro:P_numbers_sums}
The $P$-numbers of the association scheme of quadratic forms $\Q(m,q)$ satisfy
\begin{align}
F^{(m+1)}_s(r)&=P_{2s,1}(2r-1)+P_{2s,-1}(2r-1)+P_{2s-1}(2r-1)   \label{eqn:P_sum_1}\\
&=P_{2s,1}(2r,\e)+P_{2s,-1}(2r,\e)+P_{2s-1}(2r,\e),   \nonumber\\[1ex]
q^s\,F^{(m)}_s(r)&=P_{2s,1}(2r,\e)-P_{2s,-1}(2r,\e)   \label{eqn:P_sum_2}\\
&=P_{2s,1}(2r+1)-P_{2s,-1}(2r+1),   \nonumber\\[0.3ex]
\frac{\alpha_{-\e}}{\beta_r}\,\e\,q^m\,F^{(m)}_s(r)&=P_{2s,1}(2r,\e)+P_{2s,-1}(2r,\e)+P_{2s+1}(2r,\e),   \label{eqn:P_sum_3}\\[-1ex]
0&=P_{2s,1}(2r+1)+P_{2s,-1}(2r+1)+P_{2s+1}(2r+1),   \label{eqn:P_sum_4}
\end{align}
where $\alpha_\e$ and $\beta_r$ are as in Proposition~\ref{pro:valencies_multiplicities}.
\end{proposition}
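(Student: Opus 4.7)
The plan is to deduce each identity of Proposition~\ref{pro:P_numbers_sums} from the explicit $Q$-number formulas in Proposition~\ref{pro:Q_sum} (together with Proposition~\ref{pro:Q_alt}) via the universal association-scheme relation $P_i(k)\,\mu_k = v_i\,Q_k(i)$. Multiplying each of \eqref{eqn:P_sum_1}--\eqref{eqn:P_sum_4} through by the corresponding $\mu_k$ converts each $P$-sum on the right into a $v_i$-weighted sum of $Q_k(i)$'s with $k$ held fixed. For instance, the first form of \eqref{eqn:P_sum_1} becomes
\[
\mu_{2r-1}\,F^{(m+1)}_s(r) = v_{2s,1}\,Q_{2r-1}(2s,1) + v_{2s,-1}\,Q_{2r-1}(2s,-1) + v_{2s-1}\,Q_{2r-1}(2s-1),
\]
and the other identities take analogous forms (with $\mu_{2r,\e}$ or $\mu_{2r+1}$ on the left, and with $Q_{2r,\e}(\cdot)$ or $Q_{2r+1}(\cdot)$ on the right, invoking the formulas of Theorem~\ref{thm:Q_numbers} for the typed $Q$-numbers).

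Next I would substitute the explicit formulas of Proposition~\ref{pro:Q_sum} and Theorem~\ref{thm:Q_numbers}, making index shifts $r\mapsto r-1$ or $s\mapsto s-1$ where appropriate, so that every $Q$-term becomes an explicit linear combination of $F^{(m-1)}_\bullet(\bullet)$ and $F^{(m-2)}_\bullet(\bullet)$ with coefficients that are monomials in $q$. The valency sums and differences $v_{2s,1}\pm v_{2s,-1}$ have clean closed forms from Proposition~\ref{pro:valencies_multiplicities}; for even $q$ one has $v_{2s,1}+v_{2s,-1}=q^s\,(v_{2s,1}-v_{2s,-1})$ and $v_{2s+1}=q^{-s}(q^m-q^{2s})(v_{2s,1}-v_{2s,-1})$, keeping the coefficient bookkeeping tractable. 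Each identity thus reduces to a purely algebraic statement among $F$-polynomials; for example \eqref{eqn:P_sum_4} collapses to
\[
q^m\,F^{(m-2)}_r(s-1) = q^{2s}\,F^{(m-1)}_r(s-1) + (q^m-q^{2s})\,F^{(m-1)}_r(s),
\]
which can be verified directly from the definition of $F^{(m)}_r(s)$ using the $q^2$-Pascal recurrences~\eqref{eqn:Pascal_triangle} and the identity~\eqref{eqn:F_identity} (with the roles of $r$ and $s$ possibly swapped). Identity~\eqref{eqn:P_sum_3} is the most delicate: its prefactor $\alpha_{-\e}/\beta_r$ appears naturally through the ratios of $v_{2s,\pm}$ and $v_{2s+1}$ to $\mu_{2r,\e}$ once one substitutes Proposition~\ref{pro:valencies_multiplicities}.

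The main obstacle is the sheer bookkeeping. Four identities must be verified, each with sub-cases depending on whether the second index is even or odd; the valency formulas split by the parity of $q$; and the parameters $(n,c)$ defining the $F$-polynomials depend on the parity of $m$. A useful sanity check is that for odd $q$ the scheme on $\Q$ is self-dual, so $P_i(k)=Q_k(i)$, and Proposition~\ref{pro:P_numbers_sums} then follows from Proposition~\ref{pro:Q_numbers_sums} after relabelling. All genuinely new content concerns the even-$q$ case, where the asymmetry between the valencies of $\Q$ and $\S$ forces the $\alpha$ and $\beta$ coefficients to appear explicitly.
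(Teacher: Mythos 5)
Your overall strategy---converting each identity into a statement about $Q$-numbers via $\mu_k P_i(k)=v_i Q_k(i)$ and then substituting the explicit formulas of Theorem~\ref{thm:Q_numbers} and Proposition~\ref{pro:valencies_multiplicities}---is legitimate and not circular, since Theorem~\ref{thm:Q_numbers} is established before this proposition. It is, however, a different route from the paper, which never expands the explicit formulas: the paper works with the character-sum representations \eqref{eqn:Q_char_sum} and \eqref{eqn:P_char_sum}, sums the relations of Proposition~\ref{pro:Q_numbers_sums} against $F^{(m+1)}_p(s)$ (resp.\ $F^{(m)}_p(s)$), and inverts the resulting system using orthogonality of characters together with the orthogonality relation \eqref{eqn:orthogonality_F}, i.e.\ $\sum_s F^{(m)}_p(s)F^{(m)}_s(r)=q^{m(m-1)/2}\delta_{r,p}$.

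The gap in your plan sits exactly where that orthogonality relation would have to enter. After multiplying through by $\mu_k$, the right-hand sides of \eqref{eqn:P_sum_1}--\eqref{eqn:P_sum_3} become combinations of $F^{(m-1)}_{r'}(s')$, $F^{(m-2)}_{r'}(s')$ and $F^{(m)}_{r}(s)$, i.e.\ $F$-values whose first argument is the $r$-type variable; but the left-hand sides are $F^{(m+1)}_s(r)$ and $F^{(m)}_s(r)$, with the arguments transposed. These polynomials are far from symmetric in their two arguments (for instance $F^{(3)}_1(0)=q^3-1$ while $F^{(3)}_0(1)=1$), and the recurrences \eqref{eqn:Pascal_triangle} and \eqref{eqn:F_identity} only shift indices; they cannot transpose them. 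Bridging $F^{(m)}_s(r)$ and $F^{(m)}_r(s)$ requires the self-duality of the generalised Krawtchouk polynomials, i.e.\ \eqref{eqn:orthogonality_F} or the equivalent statement that $\nu_r F^{(m)}_s(r)=\nu_s F^{(m)}_r(s)$ with $\nu_r$ the number of alternating forms of rank $2r$ --- precisely the ingredient on which the paper's proof is built. Your one worked example, \eqref{eqn:P_sum_4}, is the single case where this issue is invisible because the left-hand side is $0$: there the computation does reduce to the Pascal-type identity $q^mF^{(m-2)}_r(s-1)=q^{2s}F^{(m-1)}_r(s-1)+(q^m-q^{2s})F^{(m-1)}_r(s)$, which checks out. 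For the other three identities the claim that they ``collapse'' to such identities is unsubstantiated, and as written the verification would stall. (A minor point: $v_{2s,1}+v_{2s,-1}=q^s(v_{2s,1}-v_{2s,-1})$ holds for all $q$, not only even $q$; the parity of $q$ enters only through the $\mu$'s.)
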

\begin{proof}
We use the orthogonality relation
\begin{equation}
\sum_{s=0}^{\floor{m/2}}F^{(m)}_p(s)F^{(m)}_s(r)=q^{m(m-1)/2}\delta_{r,p}   \label{eqn:orthogonality_F}
\end{equation}
(see~\cite[(17)]{DelGoe1975}, for example). This shows that the matrix
\[
F=\Big(F^{(m)}_r(s)\Big)_{0\le r,s\le\floor{m/2}}
\]
is invertible and its inverse is $q^{-m(m-1)/2}F$.
\par
Let $\Q^-_s$ be the set of all quadratic forms in $\Q(m,q)$ of rank $2s$ or $2s-1$. Similarly, let $\S^-_r$ be the set of all symmetric matrices in $\S(m,q)$ of rank $2r$ or $2r-1$. From~\eqref{eqn:Q_sum_1} and~\eqref{eqn:Q_char_sum} we find that, for every $[A]\in\Q^-_s$, we have
\[
F_p^{(m+1)}(s)=\sum_{B\in\S^-_p}\ang{A,B}.
\]
Therefore, letting $B'\in\S^-_r$, we have
\begin{align*}
\sum_{s=0}^{\floor{(m+1)/2}}F^{(m+1)}_p(s)\sum_{[A]\in\Q^-_s}\ang{A,B'}&=
\sum_{B\in\S^-_p}\sum_{[A]\in\Q}\ang{A,B+B'}\\[1ex]
&=q^{m(m+1)/2}\,\delta_{r,p},
\end{align*}
by the orthogonality of characters. From the orthogonality relation~\eqref{eqn:orthogonality_F} we then conclude that 
\[
F^{(m+1)}_s(r)=\sum_{[A]\in\Q^-_s}\ang{A,B'},
\]
which, in view of the character sum representation~\eqref{eqn:P_char_sum} of the $P$-numbers, proves \eqref{eqn:P_sum_1}.  
\par
The other identities can be proved similarly. Let $\Q^+_s$ be the set of all quadratic forms in $\Q(m,q)$ of rank $2s$ or $2s+1$ and let $\S^+_r$ be the set of all symmetric matrices in $\S(m,q)$ of rank $2r$ or $2r+1$. From~\eqref{eqn:Q_sum_2},~\eqref{eqn:Q_sum_2b}, and~\eqref{eqn:Q_char_sum} we see that
\[
\sum_{B\in\S^+_p}\ang{A,B}=\begin{cases}
\t\,q^{m-s}F^{(m)}_p(s) & \text{for $[A]\in\Q_{2s,\t}$}\\
0                       & \text{for $[A]\in\Q_{2s+1}$}.
\end{cases}
\]
Let $B'\in\S_r^+$. We then find that
\begin{align*}
\sum_{s=0}^{\floor{m/2}}F^{(m)}_p(s)\sum_{\t\in\{-1,1\}}\t\,q^{m-s}\sum_{[A]\in\Q_{2s,\t}}\ang{A,B'}&=\sum_{s=0}^{\floor{m/2}}\sum_{[A]\in\Q_s^+}\ang{A,B'}\sum_{B\in\S_p^+}\ang{A,B}\\[1ex]
&=\sum_{B\in\S_p^+}\sum_{[A]\in\Q}\ang{A,B+B'}\\[1ex]
&=q^{m(m+1)/2}\,\delta_{r,p}.
\end{align*}
From~\eqref{eqn:orthogonality_F} we conclude that
\[
\sum_{[A]\in\Q_{2s,1}}\ang{A,B'}-\sum_{[A]\in\Q_{2s,-1}}\ang{A,B'}=q^s\,F^{(m)}_s(r),
\]
which together with~\eqref{eqn:P_char_sum} proves~\eqref{eqn:P_sum_2}.
\par
To prove~\eqref{eqn:P_sum_3}, we invoke~\eqref{eqn:Q_sum_3} and~\eqref{eqn:Q_char_sum} to obtain, for $B'\in\S_{2r,\e}$,
\begin{align*}
\sum_{s=0}^{\floor{m/2}}\beta_pF^{(m)}_p(s)\sum_{[A]\in\Q_s^+}\ang{A,B'}&=\sum_{s=0}^{\floor{m/2}}\sum_{[A]\in\Q_s^+}\ang{A,B'}\sum_{\kappa\in\{-1,1\}}\kappa\,\alpha_{-\kappa}\!\sum_{B\in\S_{2p,\kappa}}\ang{A,B}\\[1ex]
&=\sum_{\kappa\in\{-1,1\}}\kappa\,\alpha_{-\kappa}\sum_{B\in\S_{2p,\kappa}}\sum_{[A]\in\Q}\ang{A,B+B'}\\[1ex]
&=\e\,\alpha_{-\e}\,q^{m(m+1)/2}\,\delta_{r,p},
\end{align*}
which, using~\eqref{eqn:orthogonality_F}, gives
\[
\sum_{[A]\in\Q_s^+}\ang{A,B'}=\frac{\alpha_{-\e}}{\beta_r}\,\e\,q^m\,F^{(m)}_s(r).
\]
Now~\eqref{eqn:P_sum_3} follows from~\eqref{eqn:P_char_sum}. 
To prove~\eqref{eqn:P_sum_4}, we take $B'\in\S_{2r+1}$ and obtain similarly as above,
\[
\sum_{s=0}^{\floor{m/2}}\beta_pF^{(m)}_p(s)\sum_{[A]\in\Q_s^+}\ang{A,B'}=0.
\]
This implies that the inner sum is zero for every $s$ and this gives~\eqref{eqn:P_sum_4}.
\end{proof}
\par
We now complete the proof of Theorem~\ref{thm:P_numbers}, which gives explicit expressions for the $P$-numbers.
\begin{proof}[Proof of Theorem~\ref{thm:P_numbers}]
The $P$-numbers of $\Q(m,q)$ are uniquely determined by Proposition~\ref{pro:P_numbers_sums}. We therefore just need to verify that the $P$-numbers claimed in the theorem satisfy the equations in Proposition~\ref{pro:P_numbers_sums}. The identities~\eqref{eqn:P_sum_2},~\eqref{eqn:P_sum_3}, and~\eqref{eqn:P_sum_4} are trivially satisfied. The identity~\eqref{eqn:P_sum_1} is verified using~\eqref{eqn:F_identity} and
\[
F^{(m+1)}_s(r)=q^{2s}F^{(m)}_s(r)+(q^m-q^{2s-2})F^{(m)}_{s-1}(r).
\]
For even $m$, this last identity can be proved directly using~\eqref{eqn:Pascal_triangle}. For odd $m$, first apply~\eqref{eqn:F_identity} and then~\eqref{eqn:Pascal_triangle}.
\end{proof}


\section{Subsets of quadratic and symmetric bilinear forms}

\subsection{Inner distributions, codes, and designs}

In what follows, let $\X=\X(m,q)$ be either $\Q(m,q)$ or $\S(m,q)$. Accordingly, for $i\in I$, let $\X_i$ be either $\Q_i$ or $\S_i$  and let $(R_i)$ be the corresponding relations on $\X$ defined in~\eqref{eqn:relations_Q} and~\eqref{eqn:relations_S}. Let $X$ be a subset of $\X$ and associate with $X$ the rational numbers 
\[
a_i=\frac{\abs{(X\times X)\cap R_i}}{\abs{X}},
\]
so that $a_i$ is the average number of pairs in $X\times X$ whose difference is contained in~$\X_i$. The sequence of numbers $(a_i)_{i\in I}$ is called the \emph{inner distribution} of~$X$. Let $Q_k(i)$ be the $Q$-numbers of $(\X,(R_i))$. The \emph{dual inner distribution} of~$X$ is the sequence of numbers $(a'_k)_{k\in I}$, where
\begin{equation}
a'_k=\sum_{i\in I}\,Q_k(i)\,a_i.   \label{eqn:def_dual_distribution}
\end{equation}
It is a well known fact of the general theory of association schemes that the numbers~$a'_k$ are nonnegative (see~\cite[Theorem~3]{DelLev1998}, for example).
\par
It is readily verified that the mapping $\rho:X\times X\to\ZZ$, given by
\[
\rho(A,B)=\rank(A-B),
\]
is a distance function on $\X$. Accordingly, given an integer $d$ satisfying $1\le d\le m$, we say that $X$ is a \emph{$d$-code} in $\X$ if $\rank(A-B)\ge d$ for all distinct $A,B\in X$. Alternatively, writing
\[
I_\ell=\{2s-1:s\in\ZZ,\,1\le 2s-1\le \ell\}\cup\{(2s,\pm1):s\in\ZZ,\,2\le 2s\le \ell\},
\]
we can define $X$ to be a $d$-code if
\[
a_i=0\quad\text{for each $i\in I_{d-1}$}.
\]
We say that $X$ is a \emph{$t$-design} if
\[
a'_k=0\quad\text{for each $k\in I_t$}.
\]
A subset $X$ of $\X$ is \emph{additive} if $X$ is a subgroup of $(\X,+)$. Note that the inner distribution $(a_i)_{i\in I}$ of an additive subset $X$ of $\X$ satisfies
\[
a_i=\abs{X\cap \X_i},
\]
for every $i\in I$. The \emph{annihilator} of an additive subset $Y$ of $\Q$ is defined to be
\[
Y^\circ=\{S\in \S:\langle Q,S\rangle=1\;\text{for each $Q\in Y$}\}
\]
and the \emph{annihilator} of an additive subset $Z$ of $\S$ is defined to be
\[
Z^\circ=\{Q\in \Q:\langle Q,S\rangle=1\;\text{for each $S\in Z$}\}.
\]
Note that $(Y^\circ)^\circ=Y$ and $(Z^\circ)^\circ=Z$ and 
\[
\abs{\Q}=\abs{Y}\,\abs{Y^\circ}=\abs{Z}\,\abs{Z^\circ}=\abs{\S}.
\]
The following MacWilliams-type identity is a special case of a general property of association schemes (see~\cite[Theorem~27]{DelLev1998}, for example).
\begin{theorem}
\label{thm:dual}
Let $X$ be an additive subset of $\X$ with inner distribution $(a_i)_{i\in I}$ and dual inner distribution $(a'_k)_{k\in I}$ and let $X^\circ$ be its annihilator with inner distribution $(a^\circ_k)_{k\in I}$. Then we have $\abs{X}a^\circ_k=a'_k$.
\end{theorem}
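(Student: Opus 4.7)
The plan is to prove the identity by a direct character-sum manipulation that combines the character-sum formulae~\eqref{eqn:Q_char_sum} and~\eqref{eqn:P_char_sum} for the $Q$-numbers with orthogonality of characters on the abelian group $(\X,+)$. Since $X$ is additive we have $a_i=\abs{X\cap\X_i}$, so~\eqref{eqn:def_dual_distribution} becomes
\[
a'_k=\sum_{i\in I}Q_k(i)\,\abs{X\cap\X_i}=\sum_{A\in X}Q_k(i(A)),
\]
where $i(A)$ denotes the unique $i\in I$ with $A\in\X_i$ (with $A$ read as the coset $[A]$ when $\X=\Q$).

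Next I would substitute the character-sum representation of the $Q$-numbers. When $\X=\Q$ this is exactly~\eqref{eqn:Q_char_sum}; when $\X=\S$, the relation $Q'_i(k)=P_i(k)$ stated at the start of Section~3 together with~\eqref{eqn:P_char_sum} yields the analogous formula. Writing $\X^*$ for the dual space (so $\X^*=\S$ when $\X=\Q$, and $\X^*=\Q$ when $\X=\S$), we have in both cases
\[
Q_k(i(A))=\sum_{B\in\X^*_k}\ang{A,B},
\]
with $\ang{\,\cdot\,,\,\cdot\,}$ the pairing from~\eqref{eqn:inner_product}. Substituting and interchanging the two summations yields
\[
a'_k=\sum_{B\in\X^*_k}\sum_{A\in X}\ang{A,B}.
\]

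Finally I would invoke character orthogonality. Since $X$ is a subgroup of $(\X,+)$ and, as observed just after~\eqref{eqn:inner_product}, the map $A\mapsto\ang{A,B}$ is a character of $(\X,+)$, the inner sum equals $\abs{X}$ when this character is trivial on $X$---equivalently, when $B\in X^\circ$---and vanishes otherwise. Therefore
\[
a'_k=\abs{X}\cdot\abs{X^\circ\cap\X^*_k}=\abs{X}\,a^\circ_k,
\]
as claimed.

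There is no real obstacle here: the result is a MacWilliams-type transform and the proof reduces to routine character-theoretic bookkeeping once the character-sum form of the $Q$-numbers is in hand. The one point deserving mild care is the interpretation of elements of $\Q$ as cosets of alternating matrices, which is taken care of by the well-definedness of the pairing noted just after~\eqref{eqn:inner_product}.
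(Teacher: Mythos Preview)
Your argument is correct: the character-sum expression for the $Q$-numbers, together with orthogonality of characters on the subgroup $X$, gives exactly $a'_k=\abs{X}\,\abs{X^\circ\cap\X^*_k}=\abs{X}\,a^\circ_k$. The one subtlety you flag---that elements of $\Q$ are cosets of alternating matrices---is indeed handled by the well-definedness of the pairing.

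The paper itself does not give a proof of this statement; it simply invokes it as a special case of a general duality theorem for translation association schemes, citing~\cite[Theorem~27]{DelLev1998}. Your proof is thus a self-contained, direct character-theoretic argument in place of the paper's appeal to the general theory. The advantage of your route is that it requires no machinery beyond what is already set up in Section~3 (the character-sum formulae for the $Q$-numbers and the pairing), whereas the cited reference establishes the analogous identity in the abstract setting of dual translation schemes. Both arguments are ultimately the same computation; yours just specialises it to the situation at hand.
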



\subsection{Subsets of symmetric bilinear forms}

In this section, we prove bounds on the size of $d$-codes in $\S$. We begin with the following proposition.
\begin{proposition}
\label{pro:inner_dist_S}
Let $Z$ be a subset of $\S(m,q)$ with inner distribution $(a_i)_{i\in I}$ and dual inner distribution $(a'_k)_{k\in I}$. Write 
\begin{align*}
A_r&=a_{2r,1}+a_{2r,-1}+a_{2r-1}, & 
A'_s&=a'_{2s,1}+a'_{2s,-1}+a'_{2s-1},\\
B_r&=a_{2r,1}+a_{2r,-1}+a_{2r+1},&
B'_s&=a'_{2s,1}+a'_{2s,-1}+a'_{2s+1},\\
C_r&=\frac{\alpha_{-1}}{\beta_r}a_{2r,1}-\frac{\alpha_{1}}{\beta_r}a_{2r,-1},&
C'_s&=q^{-s}(a'_{2s,1}-a'_{2s,-1}),
\end{align*}
where $\alpha_\e$ and $\beta_r$ are given in Proposition~\ref{pro:valencies_multiplicities}. Then we have
\begin{align*}
A'_s&=\sum_rF^{(m+1)}_s(r)A_r,\\
C'_s&=\sum_rF^{(m)}_s(r)B_r,\\
B'_s&=q^m\sum_rF^{(m)}_s(r)C_r.
\end{align*}
\end{proposition}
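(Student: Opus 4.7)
The plan is to deduce the three claimed identities from Proposition~\ref{pro:P_numbers_sums} via the translation-scheme duality between $\Q$ and $\S$ established earlier.

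First I would use the duality noted after Proposition~\ref{pro:duality}: the $Q$-numbers of the scheme on $\S$ coincide with the $P$-numbers of $\Q$ under the natural identification of the index sets, so the definition~\eqref{eqn:def_dual_distribution} for $Z\subseteq\S$ rewrites as
\[
a'_k=\sum_{i\in I}P_k(i)\,a_i.
\]
Every linear combination of the $a'_k$ therefore becomes, after interchanging the order of summation, a linear combination of the $a_i$ whose coefficients are combinations of $P$-numbers of $\Q$. These are precisely the combinations evaluated in Proposition~\ref{pro:P_numbers_sums}.

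For the first identity I would expand
\[
A'_s=\sum_{i\in I}\bigl[P_{2s,1}(i)+P_{2s,-1}(i)+P_{2s-1}(i)\bigr]a_i
\]
and invoke~\eqref{eqn:P_sum_1}: the bracketed coefficient equals $F^{(m+1)}_s(r)$ at each of the three orbits $i\in\{(2r,\pm 1),\,2r-1\}$, so regrouping by $r$ yields $A'_s=\sum_r F^{(m+1)}_s(r)A_r$. The same manoeuvre applied to $a'_{2s,1}-a'_{2s,-1}$, together with~\eqref{eqn:P_sum_2}, produces the coefficient $q^s F^{(m)}_s(r)$ at each of $i\in\{(2r,\pm 1),\,2r+1\}$; dividing by $q^s$ and regrouping by $r$ gives $C'_s=\sum_r F^{(m)}_s(r)B_r$. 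For the third identity, I would expand $B'_s$ and invoke~\eqref{eqn:P_sum_3}, which gives the coefficient $\frac{\alpha_{-\e}}{\beta_r}\,\e\,q^m F^{(m)}_s(r)$ at $i=(2r,\e)$, while~\eqref{eqn:P_sum_4} makes the contribution at $i=2r+1$ vanish; the surviving weights $\alpha_{-\e}/\beta_r$ match the definition of $C_r$ exactly, yielding $B'_s=q^m\sum_r F^{(m)}_s(r)C_r$.

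There is no conceptual obstacle here; the proposition is simply the dual statement of Proposition~\ref{pro:P_numbers_sums} read off against the $a_i$. The only care required is bookkeeping: tracking the index shift between even and odd ranks so that each $a_i$ lands in the correct $A_r$, $B_r$, or $C_r$, and verifying that the asymmetric weights $\alpha_{-\e}/\beta_r$ appearing in $C_r$ are precisely those produced on the right-hand side of~\eqref{eqn:P_sum_3}. Once these indexing conventions are aligned, each identity is obtained by comparing coefficients of $a_i$ on both sides.
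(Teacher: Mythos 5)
Your argument is correct and is exactly the paper's proof: the paper likewise observes that the $Q$-numbers of the scheme on $\S$ are the $P$-numbers of $\Q$ and then reads the three identities off from \eqref{eqn:def_dual_distribution} and Proposition~\ref{pro:P_numbers_sums}. You have merely written out the coefficient bookkeeping (including the $\alpha_{-\e}/\beta_r$ weights and the vanishing contribution from \eqref{eqn:P_sum_4}) that the paper leaves implicit, and it all checks out.
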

\begin{proof}
Since the $Q$-numbers $Q_k(i)$ of $\S(m,q)$ are the $P$-numbers $P_i(k)$ of $\Q(m,q)$, the result follows directly from~\eqref{eqn:def_dual_distribution} and Proposition~\ref{pro:P_numbers_sums}.
\end{proof}
\par
The following theorem was obtained in~\cite{Sch2015} in the case that~$q$ is odd and in~\cite{Sch2010} in the case that~$q$ is even and $d$ is odd. The case that $q$ and $d$ are even is new.
\begin{theorem}
\label{thm:bounds_S}
Let $Z$ be a $d$-code in $\S(m,q)$, where $Z$ is required to be additive if $d$ is even. Then
\[
\abs{Z}\le\begin{cases}
q^{m(m-d+2)/2} & \text{for $m-d$ even}\\
q^{(m+1)(m-d+1)/2} & \text{for $m-d$ odd}.
\end{cases}
\]
Moreover, in the case of odd $d$, equality occurs if and only if $Z$ is a $t$-design for
\[
t=2\left(\left\lfloor\frac{m+1}{2}\right\rfloor-\frac{d-1}{2}\right).
\]
\end{theorem}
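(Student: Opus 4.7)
My plan is to apply Delsarte's linear programming bound, making systematic use of the aggregates in Proposition~\ref{pro:inner_dist_S} and the orthogonality~\eqref{eqn:orthogonality_F} of the polynomials $F^{(m)}_s$. For a $d$-code $Z$ with inner distribution $(a_i)$ and dual inner distribution $(a'_k)$, the hypothesis $a_i=0$ for $i\in I_{d-1}$ translates into vanishing of aggregates: when $d=2\delta+1$ is odd, $A_r=C_r=0$ for $1\le r\le\delta$ and $B_r=0$ for $1\le r\le\delta-1$; when $d=2\delta$ is even, $A_r=B_r=C_r=0$ for $1\le r\le\delta-1$ while $A_\delta$ may be nonzero. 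Moreover $|Z|=\sum_r A_r$, $A_0=1$, $A'_s\ge 0$, $B'_s\ge 0$, and $C'_s$ has the sign structure coming from $a'_{2s,\pm 1}\ge 0$.

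For odd $d=2\delta+1$ I would invert the identity $A'_s=\sum_r F^{(m+1)}_s(r)A_r$ using~\eqref{eqn:orthogonality_F} to obtain $q^{m(m+1)/2}A_p=\sum_s F^{(m+1)}_s(p)A'_s$. Combined with $A_p=0$ for $1\le p\le\delta$ and $A'_s\ge 0$, this is a finite linear program whose value I would bound by constructing nonnegative multipliers $\mu_s$ satisfying $\sum_s\mu_s F^{(m+1)}_s(r)=1$ for $r\in\{0\}\cup\{\delta+1,\dots,n'\}$, where $n'=\lfloor(m+1)/2\rfloor$. Direct evaluation of $|Z|\le\sum_s\mu_s A'_s$ via the telescoping identities~\eqref{eqn:Pascal_triangle} and~\eqref{eqn:F_identity} should produce the claimed bounds $q^{m(m-d+2)/2}$ and $q^{(m+1)(m-d+1)/2}$. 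The main technical step is exhibiting these multipliers explicitly and checking their nonnegativity; I expect them to be nonzero precisely on the indices $0\le s\le n'-\delta$, corresponding to the design level $t=2(n'-\delta)=2(\lfloor(m+1)/2\rfloor-(d-1)/2)$.

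For even $d=2\delta$ with $Z$ additive, the direct LP loses a degree of freedom because $A_\delta$ is not forced to vanish. Here I would invoke Theorem~\ref{thm:dual}: the annihilator $Z^\circ\subseteq\Q$ is additive with $|Z||Z^\circ|=q^{m(m+1)/2}$ and inner distribution $a^\circ_k=a'_k/|Z|$. The $d$-code property of $Z$, pushed through Proposition~\ref{pro:P_numbers_sums} and the $P$-numbers from Theorem~\ref{thm:P_numbers}, translates into vanishing of $(a^\circ_k)$ on enough indices to apply a parallel LP on $\Q$, which lower bounds $|Z^\circ|$ and hence upper bounds $|Z|$. Additivity is essential because only then is Theorem~\ref{thm:dual} available.

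Finally, the equality characterisation for odd $d$ comes from complementary slackness in the LP above: equality forces the constructed multipliers $\mu_s$ to pair with vanishing aggregates $A'_s$, and running the parallel arguments using the $B'_s$ and $C'_s$ identities in Proposition~\ref{pro:inner_dist_S} gives the full condition $a'_k=0$ for $k\in I_t$, i.e.\ that $Z$ is a $t$-design. Conversely, if $Z$ is a $t$-design, the design hypothesis makes every LP inequality an equality, giving $|Z|$ equal to the bound.
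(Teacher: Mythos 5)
Your overall framework---a Delsarte linear program run on the aggregated quantities of Proposition~\ref{pro:inner_dist_S}---is indeed the paper's route: the paper's proof is a reduction to \cite[Lemmas~3.5 and~3.6]{Sch2015} together with the observation that those proofs use only the identities for $A'_s$ and $C'_s$, which do not involve the parity-sensitive quantity $C_r$ and hence persist for even $q$. However, your execution of the LP for odd $d$ fails as written. You ask for nonnegative multipliers with $\sum_s\mu_s F^{(m+1)}_s(r)=1$ for $r\in\{0\}\cup\{\delta+1,\dots,n'\}$. Under that normalisation, $\sum_s\mu_s A'_s=\sum_r A_r\bigl(\sum_s\mu_s F^{(m+1)}_s(r)\bigr)=A_0+\sum_{r>\delta}A_r=\abs{Z}$ is an exact identity, so the inequality $\abs{Z}\le\sum_s\mu_s A'_s$ carries no information and no bound can follow; nonnegativity of the $A'_s$ gives no upper bound on the right-hand side. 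The correct dual-feasibility condition is that the test polynomial \emph{vanish} on the allowed indices $r>\delta$ and be positive at $r=0$, so that $\mu_0\abs{Z}=\mu_0A'_0\le\sum_s\mu_sA'_s=f(0)A_0$. This is exactly what \eqref{eqn:ev_transform} supplies: with $n=\floor{(m+1)/2}$ and $\mu_s={n-s\brack\delta}$ one gets $\sum_s{n-s\brack\delta}F^{(m+1)}_s(r)=c^{n-\delta}{n-r\brack n-\delta}$, which is zero for $r>\delta$, whence ${n\brack\delta}\abs{Z}\le c^{n-\delta}{n\brack\delta}$ and $\abs{Z}\le c^{n-\delta}$, which is the claimed bound in both parity cases. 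Equality forces $A'_1=\dots=A'_{n-\delta}=0$, and since each $A'_s$ is a sum of nonnegative $a'_k$, this alone is equivalent to the $t$-design condition; your appeal to the $B'_s$ and $C'_s$ identities for the equality characterisation is unnecessary.

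The even-$d$ case contains a second, more serious gap. You claim the $d$-code property of $Z$ ``translates into vanishing of $(a^\circ_k)$ on enough indices.'' It does not: by Theorem~\ref{thm:dual} applied to $Z^\circ$, the hypothesis $a_i=0$ for $i\in I_{d-1}$ forces the \emph{dual} inner distribution of $Z^\circ$ to vanish there, i.e.\ $Z^\circ$ is a $(d-1)$-design, while its inner distribution $a^\circ_k=a'_k/\abs{Z}$ is not constrained to vanish anywhere. Worse, an LP based on vanishing of the inner distribution of $Z^\circ$ would produce an \emph{upper} bound on $\abs{Z^\circ}$ and hence a \emph{lower} bound on $\abs{Z}$, the wrong direction. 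The mechanism the paper imports from \cite[Lemma~3.6]{Sch2015} runs through the identity $C'_s=\sum_rF^{(m)}_s(r)B_r$ of Proposition~\ref{pro:inner_dist_S}; since $C'_s=q^{-s}(a'_{2s,1}-a'_{2s,-1})$ has no a priori sign, additivity must be used to control these differences (equivalently, to exploit the design property of the annihilator), and this is precisely the step your proposal does not supply. As it stands, the bound for even $d$ is not established.
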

\begin{proof}
As remarked above, the only new case arises when $q$ is even. When~$q$ is odd, the theorem was proved in~\cite[Lemmas~3.5 and~3.6]{Sch2015} using the identities for $A'_s$ and~$C'_s$ in Proposition~\ref{pro:inner_dist_S}. Since these do not involve $C_r$ (which is the only quantity in the conclusion of Proposition~\ref{pro:inner_dist_S} that crucially depends on the parity of $q$), the proofs of~\cite[Lemmas~3.5 and~3.6]{Sch2015} carry over verbatim to the case that $q$ is even.
\end{proof}
\par
We call a $d$-code $Y$ in $\S(m,q)$ \emph{maximal} if $d$ is odd and equality holds in Theorem~\ref{thm:bounds_S}. We shall see in Section~\ref{sec:Constructions} that maximal $d$-codes in $\S$ exist for all possible parameters. 
\par
The situation for even $d$ is somewhat mysterious. Theorem~\ref{thm:bounds_S} gives bounds for the largest additive $d$-codes in $\S(m,q)$ in this case and there certainly exist $d$-codes that are larger than the largest possible additive $d$-code~\cite{Sch2016}. For example, the largest additive $2$-code in $\S(3,2)$ has $16$ elements by Theorem~\ref{thm:bounds_S}, whereas the largest $2$-code in $\S(3,2)$ has $22$ elements~\cite{Sch2016}. In fact, this $2$-code is essentially unique and can be constructed by taking the zero matrix together with all $21$ nonalternating $3\times 3$ symmetric matrices of rank $2$. Moreover,~\cite{Sch2016} contains (not necessarily optimal) $d$-codes in $\S(m,q)$ for many small values of $q$, $m$, and even $d$, which are larger than the largest additive $d$-codes in $\S(m,q)$.


\subsection{Subsets of quadratic forms}

In this section, we prove bounds on the size of $d$-codes in $\Q$. We begin with the following counterpart of Proposition~\ref{pro:inner_dist_S}.
\begin{proposition}
\label{pro:inner_dist_Q}
Let $Y$ be a subset of $\Q(m,q)$ with inner distribution $(a_i)_{i\in I}$ and dual inner distribution $(a'_i)_{i\in I}$. Write 
\begin{align*}
A_s&=a_{2s,1}+a_{2s,-1}+a_{2s-1}, & A'_r&=a'_{2r,1}+a'_{2r,-1}+a'_{2r-1},\\
B_s&=a_{2s,1}+a_{2s,-1}+a_{2s+1}, & B'_r&=a'_{2r,1}+a'_{2r,-1}+a'_{2r+1},\\
C_s&=q^{-s}(a_{2s,1}-a_{2s,-1}),  & C'_r&=\frac{\alpha_{-1}}{\beta_r}a'_{2r,1}-\frac{\alpha_{1}}{\beta_r}a'_{2r,-1},
\end{align*}
where $\alpha_\e$ and $\beta_r$ are given in Proposition~\ref{pro:valencies_multiplicities}. Then we have
\begin{align*}
A'_r&=\sum_sF^{(m+1)}_r(s)A_s,\\
C'_r&=\sum_sF^{(m)}_r(s)B_s,\\
B'_r&=q^m\sum_sF^{(m)}_r(s)C_s.
\end{align*}
\end{proposition}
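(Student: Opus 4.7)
The plan is to argue in direct parallel to the proof of Proposition~\ref{pro:inner_dist_S}, but with the roles of $P$-numbers and $Q$-numbers swapped. Since we are now dealing with a subset of $\Q(m,q)$, the relevant eigenvalues appearing in the definition $a'_k=\sum_{i\in I}Q_k(i)\,a_i$ are the $Q$-numbers of the association scheme on $\Q$, and Proposition~\ref{pro:Q_numbers_sums} gives exactly the linear combinations of these $Q$-numbers that sum to $F^{(m+1)}_r(s)$, $\t\,q^{m-s}F^{(m)}_r(s)$, $0$, and $\beta_rF^{(m)}_r(s)$ respectively. So the proof is essentially a bookkeeping computation in which each claimed identity pops out by grouping terms appropriately.

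Concretely, I would first compute $A'_r$ by substituting $a'_{2r,1}+a'_{2r,-1}+a'_{2r-1}=\sum_{i\in I}[Q_{2r,1}(i)+Q_{2r,-1}(i)+Q_{2r-1}(i)]\,a_i$, then split the sum over $i\in I$ into the contributions from $i=2s-1$ and $i=(2s,\t)$; by the first identity of Proposition~\ref{pro:Q_numbers_sums} both coefficients equal $F^{(m+1)}_r(s)$, and collecting $a_{2s-1}+a_{2s,1}+a_{2s,-1}=A_s$ yields the first formula. Next I would compute $B'_r$ the same way: the contribution from $i=2s+1$ vanishes by~\eqref{eqn:Q_sum_2b}, while the contribution from $i=(2s,\t)$ is $\t\,q^{m-s}F^{(m)}_r(s)$ by~\eqref{eqn:Q_sum_2}, so summing over $\t$ produces $q^m\,q^{-s}(a_{2s,1}-a_{2s,-1})F^{(m)}_r(s)=q^mF^{(m)}_r(s)C_s$ and gives the third formula. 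Finally, for $C'_r$, I would write $\frac{\alpha_{-1}}{\beta_r}a'_{2r,1}-\frac{\alpha_1}{\beta_r}a'_{2r,-1}=\sum_{i\in I}\frac{1}{\beta_r}[\alpha_{-1}Q_{2r,1}(i)-\alpha_1Q_{2r,-1}(i)]\,a_i$ and apply~\eqref{eqn:Q_sum_3}: both contributions (from $i=2s+1$ and $i=(2s,\t)$) yield the common factor $F^{(m)}_r(s)$, so collecting $a_{2s+1}+a_{2s,1}+a_{2s,-1}=B_s$ gives the second formula.

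There is essentially no obstacle to this strategy; the only thing one has to be slightly careful about is tracking the parity and sign conventions so that the $\alpha_\e$, $\beta_r$ factors in $C'_r$ and $C_s$ are not mistakenly duplicated, and so that the factor $q^m$ in the $B'_r$ identity (arising from $q^{m-s}\cdot q^s$ after one pulls out $q^{-s}$ into the definition of $C_s$) appears with the correct sign. The whole proof then consists of one sentence pointing to Proposition~\ref{pro:Q_numbers_sums} and~\eqref{eqn:def_dual_distribution}, exactly as in the proof of Proposition~\ref{pro:inner_dist_S}.
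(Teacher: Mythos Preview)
Your proposal is correct and matches the paper's own proof exactly: the paper simply states that the result follows directly from~\eqref{eqn:def_dual_distribution} and Proposition~\ref{pro:Q_numbers_sums}, which is precisely the one-sentence argument you outline after carrying out the bookkeeping. Your detailed verification of each of the three identities is accurate, including the handling of the $q^m$ factor in the $B'_r$ identity and the $\alpha_\e/\beta_r$ normalisation in $C'_r$.
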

\begin{proof}
This follows directly from~\eqref{eqn:def_dual_distribution} and Proposition~\ref{pro:Q_numbers_sums}.
\end{proof}
\par
In the next theorem, we give bounds for $d$-codes in $\Q$. Since the association schemes on $\Q(m,q)$ and $\S(m,q)$ are isomorphic for odd $q$, the statement of Theorem~\ref{thm:bounds_S} still holds when $Z$ is a $d$-code in $\Q(m,q)$ and $q$ is odd. We therefore give bounds for $d$-codes in $\Q(m,q)$ only for even $q$.
\begin{theorem}
\label{thm:bound_Q}
Let $q$ be even and let $Y$ be a $d$-code in $\Q(m,q)$. Then
\[
\abs{Y}\le\begin{cases}
q^{m(m-d+2)/2}     & \text{for odd $m$ and odd $d$},\\
q^{(m+1)(m-d+1)/2} & \text{for even $m$ and odd $d$},\\
q^{(m-1)(m-d+2)/2} & \text{for even $m$ and even $d$},\\
q^{m(m-d+1)/2}     & \text{for odd $m$ and even $d$}.
\end{cases}
\]
Moreover, in the case of odd $d$, equality occurs if and only if $Y$ is a $t$-design for
\[
t=2\left(\left\lfloor\frac{m+1}{2}\right\rfloor-\frac{d-1}{2}\right).
\]
\end{theorem}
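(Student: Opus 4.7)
The plan is to split the argument by the parity of $d$. The odd-$d$ case is a direct transplant of the proof of Theorem~\ref{thm:bounds_S}, while the even-$d$ case (genuinely new, since for odd $q$ it is already absorbed into Theorem~\ref{thm:bounds_S} by self-duality) proceeds by reducing to the sub-scheme of alternating bilinear forms.

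For odd $d=2e-1$, Proposition~\ref{pro:inner_dist_Q} provides identities structurally identical to those in Proposition~\ref{pro:inner_dist_S}: the quantities $A'_r$ and $C'_r$ are nonnegative ($A'_r$ as a sum of dual-inner-distribution entries, and $C'_r=(1/\beta_r)\,a'_{2r,1}\ge 0$ because $\alpha_1=0$ for even $q$), and they are linear combinations of the nonnegative $A_s$ and $B_s$ with generalised Krawtchouk coefficients. The $d$-code hypothesis forces $A_s=0$ for $1\le s\le e-1$ and $B_s=0$ for $0\le s\le e-2$. Under these vanishing conditions the linear-programming argument of \cite[Lemmas~3.5 and~3.6]{Sch2015} transplants verbatim to yield the bounds $q^{m(m-d+2)/2}$ for odd $m$ and $q^{(m+1)(m-d+1)/2}$ for even $m$, together with the $t$-design characterisation at equality.

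For even $d=2e\ge 2$, I would define $\phi:\Q\to\S$ by $\phi(Q)(x,y)=Q(x+y)-Q(x)-Q(y)$. The canonical forms of Proposition~\ref{pro:canonical_quad_forms}, combined with the characteristic-$2$ identity $S_{x^2}=0$, show that $\phi$ lands in the alternating bilinear forms and sends $\Q_{2s,\t}$ to $\S_{2s,1}$ (rank preserved) while sending $\Q_{2s+1}$ to $\S_{2s,1}$ (rank decreased by one). The kernel of $\phi$ is the set of $q^m$ sums $\sum_i c_i x_i^2$, all of rank at most $1$; hence $\phi|_Y$ is injective as soon as $d\ge 2$, and for distinct $Q,Q'\in Y$ the rank of $\phi(Q)-\phi(Q')$ is either $\rank(Q-Q')$ or $\rank(Q-Q')-1$, in either case at least $2e=d$. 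So $\phi(Y)$ is a $d$-code of size $|Y|$ in the (additive) sub-scheme of alternating bilinear forms on $V(m,q)$. Invoking the classical Delsarte--Goethals maximum-size bound for $d$-codes in that sub-scheme---namely $q^{(m-1)(m-d+2)/2}$ for even $m$ and $q^{m(m-d+1)/2}$ for odd $m$---then gives the remaining two bounds (equivalently, one may run the same LP as above restricted to the alternating sub-scheme, where $a_{2s,-1}\equiv 0$).

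The main obstacle is the bookkeeping in the odd-$d$ LP argument: transferring the identities is immediate, but confirming that both the numerical bound and the $t$-design threshold $t=2(\lfloor(m+1)/2\rfloor-(d-1)/2)$ emerge correctly from the vanishing conditions on $A_s$, $B_s$, $A'_r$, $C'_r$ requires careful parity analysis in the two $m$-sub-cases. The even-$d$ reduction via $\phi$ is comparatively elementary once the alternating-forms bound is invoked, and the only delicate point there is the verification that both even-rank types $\Q_{2s,\pm 1}$ land in the single class $\S_{2s,1}$, which is precisely what $S_{x^2}=0$ forces in characteristic~$2$.
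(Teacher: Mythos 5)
Your argument is correct, and it splits into one half that matches the paper and one half that does not. For odd $d$ you do exactly what the paper does: combine the identity $A'_r=\sum_sF^{(m+1)}_r(s)A_s$ from Proposition~\ref{pro:inner_dist_Q} with~\eqref{eqn:ev_transform} and the nonnegativity of the $A'_r$ to get $\abs{Y}\le c^{n-\delta+1}$ with $n=\lfloor(m+1)/2\rfloor$, $c=q^{m(m+1)/(2n)}$, and read off the $t$-design criterion from the vanishing of $A'_1,\dots,A'_{n-\delta+1}$. (One small slip there: $B_0=a_{0,1}+a_1=1\neq 0$, so your claim that $B_s=0$ for $0\le s\le e-2$ fails at $s=0$; but the $B_s$ play no role in the odd-$d$ argument, so nothing breaks.) For even $d$ the paper stays inside $\Q(m,q)$ and runs the same linear-programming argument a second time, now on the pair $B_s$, $C'_r$ from Proposition~\ref{pro:inner_dist_Q}, using that for even $q$ one has $\alpha_1=0$, so $C'_r=a'_{2r,1}/\beta_r\ge 0$; with $n=\lfloor m/2\rfloor$ and $c=q^{m(m-1)/(2n)}$ this gives $\abs{Y}\le c^{n-\delta+1}$, which is precisely the two even-$d$ bounds. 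You instead polarise: $\phi(Q)(x,y)=Q(x+y)-Q(x)-Q(y)$ lands in the alternating forms in characteristic $2$, has kernel the $q^m$ forms of rank $\le 1$, preserves even rank and drops odd rank by one, so $\phi(Y)$ is a $d$-code of size $\abs{Y}$ in the Delsarte--Goethals scheme, and their bound $c^{n-\delta+1}$ gives the same two exponents. Your reduction is correct and arguably more transparent, but it imports the Delsarte--Goethals code bound (for arbitrary, not just additive, codes) as a black box, whereas the paper's route is self-contained given Proposition~\ref{pro:inner_dist_Q}; the two are really dual views of the same mechanism, since the paper's quantity $C'_r$ is exactly the alternating component $a'_{2r,1}$ of the dual inner distribution that your map $\phi$ makes visible on the primal side.
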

\begin{proof}
Let $(a_i)_{i\in I}$ be the inner distribution of $Y$. First assume that $d$ is odd, say $d=2\d-1$. Let $A_s$ and $A'_r$ be as defined in Proposition~\ref{pro:inner_dist_Q} and put
\[
n=\lfloor (m+1)/2\rfloor\quad\text{and}\quad c=q^{m(m+1)/(2n)}.
\]
From Proposition~\ref{pro:inner_dist_Q} and~\eqref{eqn:ev_transform} we obtain
\[
\sum_{r=0}^{n-\delta+1}{n-r\brack \delta-1}A'_r=c^{n-\delta+1}\sum_{s=0}^n{n-s\brack n-\delta+1}A_s.
\]
Since $A'_0=\abs{Y}$ and $A_0=1$ and $A_s=0$ for $0<s<\delta$, we obtain
\[
\sum_{r=1}^{n-\delta+1}{n-r\brack \delta-1}A'_r={n\brack \delta-1}(c^{n-\delta+1}-\abs{Y}).
\]
Since the numbers $A'_r$ are nonnegative, the left-hand side is nonnegative, and therefore $\abs{Y}\le c^{n-\delta+1}$, as required. Moreover, this inequality is an equality if and only if $A'_1=\cdots=A'_{n-\d+1}=0$, which is equivalent to $Y$ being a $t$-design for $t=2(n-\d+1)$.
\par
Now assume that $d$ is even, say $d=2\delta$. Let $B_s$ and $C'_r$ be as defined in Proposition~\ref{pro:inner_dist_Q} and put
\[
n=\lfloor m/2\rfloor\quad\text{and}\quad c=q^{m(m-1)/(2n)}.
\]
From Proposition~\ref{pro:inner_dist_Q} and~\eqref{eqn:ev_transform} we obtain
\[
\sum_{r=0}^{n-\delta+1}{n-r\brack \delta-1}C'_r=c^{n-\delta+1}\sum_{s=0}^n{n-s\brack n-\delta+1}B_s
\]
and find, similarly as above,
\[
\sum_{r=1}^{n-\delta+1}{n-r\brack \delta-1}C'_r={n\brack \delta-1}(c^{n-\delta+1}-\abs{Y}).
\]
Again, we have $\abs{Y}\le c^{n-\delta+1}$, which completes the proof.
\end{proof}
\par
We call a $d$-code $Y$ in $\Q(m,q)$ \emph{maximal} if equality holds in Theorem~\ref{thm:bound_Q} or in Theorem~\ref{thm:bounds_S}, unless $q$ is odd and $d$ is even. We shall see in Section~\ref{sec:Constructions} that maximal $d$-codes in $\Q$ exist for all possible parameters. 
\par
An interesting situation, in particular from the coding-theoretic viewpoint of Section~\ref{sec:applications}, occurs for $d$-codes in $\Q$, when $d$ and $m$ are even and no difference between distinct elements is hyperbolic of rank $d$. We call such a set an \emph{elliptic} $d$-code.
\begin{theorem}
\label{thm:bound_elliptic_Q}
Let $m$ and $d$ be even and let $Y$ be an elliptic $d$-code in $\Q(m,q)$. Then
\[
\abs{Y}\le q^{m(m-d+1)/2}.
\]
Moreover, equality occurs if and only if $Y$ is a $t$-design for $t=m-d+1$.
\end{theorem}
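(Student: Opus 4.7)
Let $(a_i)_{i\in I}$ be the inner distribution of $Y$ and set $d=2\delta$, $n=m/2$, $c=q^{m-1}$, $c'=q^{m+1}$, using the shorthands $A_s,B_s,C_s,A'_r,B'_r,C'_r$ of Proposition~\ref{pro:inner_dist_Q}. The plan is to mimic the $C'_r$-argument in the proof of Theorem~\ref{thm:bound_Q} for even $d$, but to combine it with the parallel $A'_r$-argument, exploiting the extra elliptic constraint $a_{d,1}=0$ to produce a cancellation that sharpens the exponent by a factor of $q^\delta$.

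First I would translate the hypotheses. Since $Y$ is a $d$-code, $A_s=B_s=C_s=0$ for $1\le s\le\delta-1$; the elliptic hypothesis $a_{d,1}=0$, together with $a_{d-1}=0$, gives the single extra relation $A_\delta+q^\delta C_\delta=2a_{d,1}+a_{d-1}=0$. Using that $\lfloor(m+1)/2\rfloor=n$ (this is where the parity of $m$ enters), I apply~\eqref{eqn:ev_transform} with $j=n-\delta$ to both $F^{(m+1)}_r$ and $F^{(m)}_r$, multiply the first two identities of Proposition~\ref{pro:inner_dist_Q} by ${n-r\brack\delta}$, and sum over $r$. Only $s=0$ and $s=\delta$ survive on the right, yielding
\[
\sum_{r}{n-r\brack\delta}A'_r=c'^{n-\delta}\Bigl({n\brack\delta}+A_\delta\Bigr),\quad \sum_{r}{n-r\brack\delta}B'_r=q^{m} c^{n-\delta}\Bigl({n\brack\delta}+C_\delta\Bigr).
\]

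The decisive step is to take the combination with weight $q^{-\delta}$ on $B'_r$. A short computation from $c'/c=q^2$ and $m=2n$ yields the key identity $q^{m-\delta}c^{n-\delta}=q^\delta c'^{n-\delta}$; together with $A_\delta=-q^\delta C_\delta$ this annihilates the $A_\delta$- and $C_\delta$-contributions, leaving
\[
\sum_{r}{n-r\brack\delta}\bigl(A'_r+q^{-\delta}B'_r\bigr)=c'^{n-\delta}(1+q^\delta){n\brack\delta}.
\]
Since every $A'_r,B'_r$ is non-negative and $A'_0=\abs{Y}$, $B'_0=\abs{Y}+a'_1\ge\abs{Y}$, the left side is at least $\abs{Y}\,q^{-\delta}(1+q^\delta){n\brack\delta}$. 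Rearranging gives $\abs{Y}\le q^\delta c'^{n-\delta}$, and the exponent $\delta+(m+1)(m-d)/2$ simplifies to $m(m-d+1)/2$, as required.

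Equality throughout forces $A'_r=B'_r=0$ for $1\le r\le n-\delta$ together with $a'_1=0$; by non-negativity of the individual $a'_k$ this is equivalent to $a'_k=0$ for every $k\in I$ with $\rank(k)\le 2(n-\delta)+1=m-d+1$, i.e.\ to $Y$ being a $t$-design for $t=m-d+1$, and conversely such a design makes every inequality in the chain tight. The main obstacle is pinpointing the correct weight $q^{-\delta}$ and verifying the numerical identity $q^{m-\delta}c^{n-\delta}=q^\delta c'^{n-\delta}$, which is exactly where the hypothesis that $m$ is even enters decisively; once this identity is in hand, the remaining steps are routine adaptations of the arguments already used for Theorems~\ref{thm:bounds_S} and~\ref{thm:bound_Q}.
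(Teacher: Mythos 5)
Your proof is correct and follows essentially the same route as the paper: the paper also applies \eqref{eqn:ev_transform} to the $A'_r$ and $B'_r$ identities of Proposition~\ref{pro:inner_dist_Q} and sums $\sum_r{n-r\brack\delta}(q^\delta A'_r+B'_r)$, which is exactly your combination rescaled by $q^\delta$, and likewise kills the boundary terms via $A_\delta+q^\delta C_\delta=2a_{2\delta,1}+a_{2\delta-1}=0$. The numerical identity you isolate as the ``decisive step'' is the same exponent bookkeeping ($m=2n$) that makes the paper's single displayed combination work, so there is no substantive difference.
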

\begin{proof}
Write $\delta=d/2$ and $n=m/2$. Let $(a_i)_{i\in I}$ be the inner distribution of~$Y$ and let $A_s$, $A'_r$, $C_s$, and $B'_r$ be as defined in Proposition~\ref{pro:inner_dist_Q}. From Proposition~\ref{pro:inner_dist_Q} and~\eqref{eqn:ev_transform} we obtain
\[
\sum_{r=0}^{n-\delta}{n-r\brack \delta}(q^\delta A'_r+B'_r)=q^{(m+1)(n-\delta)}q^\delta\sum_{s=0}^n{n-s\brack n-\delta}(A_s+q^\delta C_s)
\]
and therefore, since $A_s=C_s=0$ for $0<s<\delta$,
\[
\sum_{r=0}^{n-\delta}{n-r\brack \delta}(q^\delta A'_r+B'_r)=q^{(m+1)(n-\delta)}q^\delta\left({n\brack \delta}(A_0+q^\delta C_0)+A_\delta+q^\delta C_\delta\right).
\]
We have
\begin{align*}
A_\delta+q^\delta C_\delta&=a_{2\delta,1}+a_{2\delta,-1}+a_{2\delta-1}+(a_{2\delta,1}-a_{2\delta,-1})\\
&=2a_{2\delta,1}+a_{2\delta-1}\\
&=0
\end{align*}
since $Y$ is an elliptic $(2\delta)$-code. Since $A_0=C_0=1$ and $A'_0=\abs{Y}$ and $B'_0=\abs{Y}+a'_1$, we then obtain
\[
{n\brack \delta}a'_1+\sum_{r=1}^{n-\delta}{n-r\brack \delta}(q^\delta A'_r+B'_r)={n\brack \delta}(1+q^\delta)(q^{(m+1)(n-\delta)}q^\delta-\abs{Y}).
\]
Since the left-hand side is nonnegative, we find that
\[
\abs{Y}\le q^{(m+1)(n-\delta)}q^\delta.
\]
Moreover, equality occurs if and only if $q^\delta A'_r+B'_r=0$ for all $r$ satisfying $1\le r\le n-\delta$, or equivalently if and only if $Y$ is a $t$-design for $t=m-d+1$.
\end{proof}
We call an elliptic $(2\d)$-code $Y$ in $\Q(2n,q)$ \emph{maximal} if equality holds in Theorem~\ref{thm:bound_elliptic_Q}. We shall see in Section~\ref{sec:Constructions} that maximal elliptic $d$-codes in $\Q(m,q)$ exist for all possible parameters. 


\subsection{Inner distributions of maximal codes}

If $Z$ is a subset of $\S(m,q)$ such that the bound in Theorem~\ref{thm:bounds_S} holds with equality, then in many cases~\cite{Sch2010} and~\cite{Sch2015} give explicit expressions for the inner distribution of~$Z$. These results carry over to subsets of $\Q(m,q)$ in the case that $q$ is odd.
\par
In this section we provide explicit expressions for the inner distributions of maximal $d$-codes in $\Q(m,q)$. We note that, once we know Proposition~\ref{pro:inner_dist_Q} for even $q$, the results in this section can be proved with methods that are very similar to those of~\cite[Section~3.3]{Sch2015}. Hence the proofs in this section are sketched only.
\par
Our first result holds for $d$-codes in $\Q(m,q)$, where $d$ is odd.
\begin{theorem}
\label{thm:inner_dist_d_odd}
If $Y$ is a maximal $(2\d+1)$-code in $\Q(2n+1,q)$, then its inner distribution $(a_i)_{i\in I}$ satisfies
\begin{gather*}
a_{2s-1}={n\brack s-1}\sum_{j=0}^{s-\d-1}(-1)^jq^{j(j-1)}{s\brack j}\big(q^{(2n+1)(s-\d-j)}-1\big),\\[1ex]
a_{2s,\t}=\frac{1}{2}q^s\big(q^s+\t\big)\,{n\brack s}\sum_{j=0}^{s-\d-1}(-1)^jq^{j(j-1)}{s\brack j}\big(q^{(2n+1)(s-\d-j)}-1\big)
\end{gather*}
for $s>0$. If $Y$ is a maximal $(2\d+1)$-code in $\Q(2n,q)$, then its inner distribution $(a_i)_{i\in I}$ satisfies
\begin{align*}
a_{2s-1,\t}&=\frac{1}{2}(q^{2s}-1){n\brack s}\sum_{j=0}^{s-\d-1}(-1)^jq^{j(j-1)}{s-1\brack j}q^{(2n+1)(s-\d-j-1)+2j},\\[1ex]
a_{2s,\t}&=\frac{1}{2}{n\brack s}\sum_{j=0}^{s-\d}(-1)^jq^{j(j-1)}{s\brack j}\big(q^{(2n+1)(s-\d-j)+2j}-1\big)\\
&+\frac{\t}{2}\,q^s{n\brack s}\sum_{j=0}^{s-\d-1}(-1)^jq^{j(j-1)}{s\brack j}\big(q^{(2n+1)(s-\d-j)+2(j-s)}-1\big)
\end{align*}
for $s>0$.
\end{theorem}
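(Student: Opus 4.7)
The plan is to combine Theorem~\ref{thm:bound_Q} (characterising maximal $(2\d+1)$-codes as $t$-designs with $t = 2(\lfloor(m+1)/2\rfloor - \d)$) with the three transform identities in Proposition~\ref{pro:inner_dist_Q}. The code hypothesis yields $A_s = C_s = 0$ for $1 \le s \le \d$ and $B_s = 0$ for $1 \le s \le \d-1$, with $A_0 = B_0 = C_0 = 1$; the $t$-design property yields $A'_r = C'_r = 0$ for $1 \le r \le t/2$ and $B'_r = 0$ for $1 \le r \le t/2 - 1$, with $A'_0 = B'_0 = C'_0 = |Y|$. Applying~\eqref{eqn:ev_transform} to each of the three transforms reduces each to a triangular system of the shape
\[
\sum_s {n' - s \brack j} X_s = {n' \brack j}\cdot(\text{explicit power of }q)
\]
with parameters $(n', c)$ depending on the transform; in particular, on $\Q(2n, q)$ the $A$-transform uses $c = q^{2n+1}$ while the $B'$- and $C'$-transforms use $c = q^{2n-1}$, and the $B'$-transform additionally picks up a factor $q^{-2\d}$ coming from $|Y|/q^m$.

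Each such system is solved by the ansatz $X_s = {n' \brack s} T_s$: the identity ${n'-s \brack j}{n' \brack s} = {n' \brack j}{n'-j \brack s}$ then reduces it to the standard $q^2$-binomial transform $\sum_s {n'-j \brack s} T_s = \text{explicit}$, which inverts by the $q^2$-binomial inversion formula (the converse of~\eqref{eqn:ev_transform}) to give $T_s$ as $\sum_k (-1)^k q^{k(k-1)} {s \brack k}(\text{shifted power of }q - 1)$. On $\Q(2n+1, q)$ all three transforms produce the same $T_s$, yielding $A_s = {n+1 \brack s}T_s$, $C_s = {n \brack s}T_s$, and $B_s = q^{2s}{n \brack s}T_s + {n \brack s}T_{s+1}$; on $\Q(2n, q)$ the three transforms produce genuinely distinct sums, corresponding to the three different sums appearing in the statement of the theorem.

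The individual values $a_{2s-1}$ and $a_{2s,\t}$ are then recovered by combining $A_s, B_s, C_s$: telescoping the identity $B_{s'} - A_{s'+1} = (a_{2s',1}+a_{2s',-1}) - (a_{2s'+2,1}+a_{2s'+2,-1})$ over $s' \ge s$ gives $a_{2s,1}+a_{2s,-1}$, and then $a_{2s-1} = A_s - (a_{2s,1}+a_{2s,-1})$ and $a_{2s,\t} = \tfrac{1}{2}(a_{2s,1}+a_{2s,-1}) + \tfrac{1}{2}\t q^s C_s$. For $\Q(2n+1, q)$, Pascal's identity~\eqref{eqn:Pascal_triangle} collapses the telescope to $a_{2s,1}+a_{2s,-1} = q^{2s}{n \brack s}T_s$ and hence $a_{2s-1} = {n \brack s-1}T_s$, producing the stated formulas cleanly. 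For $\Q(2n, q)$, the simplification is more delicate, and the hyperbolic/elliptic asymmetry of valencies for even $q$ (Proposition~\ref{pro:valencies_multiplicities}) is what introduces the extra $\t$-dependent term in $a_{2s,\t}$. The principal obstacle is precisely this final bookkeeping in the even-dimension case; once Proposition~\ref{pro:inner_dist_Q} is available for even $q$, however, the argument runs in close parallel to~\cite[Section~3.3]{Sch2015}, justifying the author's intention to sketch only the proof.
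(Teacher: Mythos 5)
Your proposal is correct and follows essentially the same route as the paper: the paper's proof merely observes that maximality forces the $t$-design property (via Theorem~\ref{thm:bound_Q}) and then defers to the argument of \cite[Theorem~3.9]{Sch2015}, which is exactly the inversion of the three transforms of Proposition~\ref{pro:inner_dist_Q} that you carry out, and your bookkeeping ($A_s={n+1\brack s}T_s$, $C_s={n\brack s}T_s$, $B_s=q^{2s}{n\brack s}T_s+{n\brack s}T_{s+1}$, the telescope, and the parameters $c=q^{2n+1}$ versus $c=q^{2n-1}$ with the extra $q^{-2\d}$ in the $B'$-system) all checks out. The only quibble is your closing attribution of the $\t$-dependent term in $a_{2s,\t}$ to the even-$q$ valency asymmetry: that term is simply $\tfrac{\t}{2}\,q^sC_s$ and arises for every $q$, but this side remark does not affect the argument.
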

\begin{proof}
If $Y$ is a maximal $d$-code in $\Q(m,q)$, where $d$ is odd, then Theorems~\ref{thm:bounds_S} and~\ref{thm:bound_Q} imply that $Y$ is a $t$-design for
\[
t=2\left(\left\lfloor\frac{m+1}{2}\right\rfloor-\frac{d-1}{2}\right).
\]
For odd $q$, the theorem is then~\cite[Theorem~3.9]{Sch2015} and its proof relies just on Proposition~\ref{pro:inner_dist_S}. For even $q$, the proof is almost identical if we use Proposition~\ref{pro:inner_dist_Q} instead of Proposition~\ref{pro:inner_dist_S}.
\end{proof}
\par
The next result holds for maximal $d$-codes in $\Q(m,q)$ when $q$ is even and $d$ is even. In this case, the inner distribution is only partially determined. It is not clear whether there exist such $d$-codes with different inner distribution.
\begin{theorem}
If $q$ is even and $Y$ is a maximal $(2\d)$-code in $\Q(m,q)$, then its inner distribution $(a_i)_{i\in I}$ satisfies
\[
a_{2s,1}+a_{2s,-1}+a_{2s+1}={n\brack s}\sum_{j=0}^{s-\d}(-1)^jq^{j(j-1)}{s\brack j}\big(c^{s-\d-j+1}-1\big)
\]
for $s>0$, where $n=\floor{m/2}$ and $c=q^{m(m-1)/(2n)}$.
\end{theorem}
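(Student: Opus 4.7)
The strategy parallels the odd-$d$ case and the analogous result for $\S$ in [Sch2015, Section~3.3], using Proposition~\ref{pro:inner_dist_Q} in place of Proposition~\ref{pro:inner_dist_S}. The idea is that maximality forces many of the quantities $C'_r$ to vanish, reducing the linear relations of Proposition~\ref{pro:inner_dist_Q} to an upper-triangular system that can be inverted for the unknowns $B_s = a_{2s,1}+a_{2s,-1}+a_{2s+1}$.

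First I record the boundary values. Since $Y$ is a $(2\delta)$-code with $\delta \geq 1$, we have $a_{0,1}=1$, $a_{0,-1}=0$, $a_1=0$, and $a_{2s-1}=a_{2s,\pm 1}=0$ for $1\leq s\leq \delta-1$, giving $B_0=1$ and $B_s=0$ for $1\leq s\leq \delta-1$. Next I extract the vanishing of $C'_r$: the even-$d$ half of the proof of Theorem~\ref{thm:bound_Q} established
\[
\sum_{r=1}^{n-\delta+1}{n-r\brack \delta-1} C'_r = {n \brack \delta-1}\bigl(c^{n-\delta+1} - \abs{Y}\bigr),
\]
and for even $q$ we have $\alpha_1=0$, so $C'_r = a'_{2r,1}/\beta_r \geq 0$. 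Maximality $\abs{Y}=c^{n-\delta+1}$ therefore forces $C'_r=0$ for every $1\leq r\leq n-\delta+1$, while $C'_0 = a'_{0,1}=\abs{Y}=c^{n-\delta+1}$ by Theorem~\ref{thm:Q_numbers}.

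Now I combine these facts with $C'_r = \sum_s F^{(m)}_r(s) B_s$ (Proposition~\ref{pro:inner_dist_Q}) and with the defining identity~\eqref{eqn:ev_transform}. Multiplying \eqref{eqn:ev_transform} by $B_s$ and summing over $s$ yields, for $0\leq j\leq n-\delta+1$,
\[
\sum_{r=0}^j {n-r \brack n-j} C'_r = c^j \sum_{s=0}^n {n-s \brack j} B_s.
\]
By the previous step the left-hand side collapses to ${n \brack j} c^{n-\delta+1}$, and the boundary values $B_0=1$, $B_s=0$ for $1\leq s\leq\delta-1$ reduce this to
\[
\sum_{s=\delta}^n {n-s \brack j} B_s = {n \brack j}\bigl(c^{n-\delta+1-j} - 1\bigr), \qquad 0\leq j\leq n-\delta.
\]
This is an upper-triangular linear system with unit diagonal in the unknowns $B_\delta,\dots,B_n$, which I invert using the standard Möbius inversion for $q^2$-binomial coefficients; after re-indexing via $s=n-v$, $j'=j-v$, and simplifying with the identity ${n-s+j \brack n-s}{n \brack n-s+j} = {n \brack s}{s \brack j}$, one obtains the stated formula for $B_s$.

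The only nontrivial technical point is the Möbius inversion, but since the system is triangular with unit diagonal this is a routine application of the orthogonality of $q^2$-binomials and matches the template of [Sch2015, Section~3.3]; the paper's remark that proofs in this section are sketched only is consistent with leaving these routine computations implicit.
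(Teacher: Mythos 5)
Your proposal is correct and follows essentially the same route as the paper: it uses Proposition~\ref{pro:inner_dist_Q}, deduces $C'_r=0$ for $1\le r\le n-\delta+1$ from the maximality argument in the proof of Theorem~\ref{thm:bound_Q} (correctly noting that $\alpha_1=0$ for even $q$ makes $C'_r$ nonnegative), and then inverts the resulting triangular system for the $B_s$. The only difference is that you carry out the $q^2$-binomial inversion explicitly where the paper simply cites \cite[Lemma~3.8]{Sch2015} for the solution.
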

\begin{proof}
Let $B_s$ and $C'_r$ be defined as in Proposition~\ref{pro:inner_dist_Q}, so that
\[
C'_r=\sum_sF^{(m)}_r(s)B_s.
\]
In particular $B_s=a_{2s,1}+a_{2s,-1}+a_{2s+1}$. If $Y$ is a maximal $(2\d)$-code in $\Q(m,q)$, then we conclude from the proof of Theorem~\ref{thm:bound_Q} that $C'_r=0$ for all $r$ satisfying $1\le r\le n-\d+1$. This gives enough equations to solve for the numbers~$B_s$. The solution is given by~\cite[Lemma~3.8]{Sch2015}.
\end{proof}
\par
The final result of this section concerns maximal elliptic $(2\d)$-codes in $\Q(2n,q)$.
\begin{theorem}
\label{thm:inner_dist_elliptic}
If $Y$ is a maximal elliptic $(2\d)$-code in $\Q(2n,q)$, then its inner distribution $(a_i)_{i\in I}$ satisfies
\begin{align*}
a_{2s-1}&=\frac{1}{2}(q^{2s}-1){n\brack s}\sum_{j=0}^{s-\d-1}(-1)^jq^{j(j-1)}{s-1\brack j}\big(q^{2n(s-\d-j-1)}q^{s+j-1}-1\big),\\[1ex]
a_{2s,\t}&=\frac{1}{2}(q^s+\t){n\brack s}\sum_{j=0}^{s-\d}(-1)^jq^{j(j-1)}{s\brack j}\big(q^{2n(s-\d-j)}q^j-\t\big)
\end{align*}
for $s>0$. 
\end{theorem}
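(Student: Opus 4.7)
By Theorem~\ref{thm:bound_elliptic_Q}, the maximality of $Y$ is equivalent to $Y$ being a $t$-design with $t=2n-2\d+1$, so $a'_k=0$ for all $k\in I_t$. In particular $a'_{2r-1}=0$ for $1\le r\le n-\d+1$ and $a'_{2r,\pm1}=0$ for $1\le r\le n-\d$. Consequently the derived quantities of Proposition~\ref{pro:inner_dist_Q} satisfy $A'_r=B'_r=C'_r=0$ for $1\le r\le n-\d$, while $A'_0=B'_0=C'_0=\abs{Y}$ (at $r=0$ we use $a'_1=0$ for $B'_0$ and the values of $\alpha_\e$ and $\beta_0$ for $C'_0$).

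The plan is to translate these vanishing conditions, via Proposition~\ref{pro:inner_dist_Q}, into three linear systems for the sequences $(A_s)$, $(B_s)$, and $(C_s)$ on the range $\d\le s\le n$. The $(2\d)$-code condition additionally forces $A_s=B_s=C_s=0$ for $1\le s\le\d-1$, and clearly $A_0=B_0=C_0=1$. Each of the three systems has the same structure as the one encountered in the proof of Theorem~\ref{thm:bound_Q} and can be solved by the inversion identity~\eqref{eqn:ev_transform} combined with the inversion lemma~\cite[Lemma~3.8]{Sch2015}, yielding single-sum closed forms for $A_s$, $B_s$, and $C_s$.

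Once $A_s$, $B_s$, and $C_s$ are available, the entries of the inner distribution are reconstructed as follows. The identity $A_s-B_s=a_{2s-1}-a_{2s+1}$ telescopes, with the convention $a_{-1}=0$, into $a_{2s-1}=\sum_{j=0}^{s-1}(B_j-A_j)$; setting $E_s=a_{2s,1}+a_{2s,-1}$ we then have $E_s=A_s-a_{2s-1}$, and combining with $q^sC_s=a_{2s,1}-a_{2s,-1}$ gives $a_{2s,\t}=\tfrac{1}{2}(E_s+\t\, q^sC_s)$. The elliptic hypothesis $a_{2\d,1}=0$ is automatically satisfied by this solution: a direct evaluation at $s=\d$ forces $A_\d={n\brack\d}(q^\d-1)$ and $C_\d={n\brack\d}(q^{-\d}-1)$, hence $A_\d+q^\d C_\d=0$.

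The main obstacle is the algebraic repackaging of the telescoping sum together with repeated applications of~\eqref{eqn:Pascal_triangle},~\eqref{eqn:ev_transform}, and~\eqref{eqn:F_identity} into the compact single-sum expressions displayed in the statement. These manipulations are of exactly the same flavour as those carried out in~\cite[Section~3.3]{Sch2015}, and in keeping with the style of the preceding results of this subsection, I would record only the setup sketched above rather than write out the full routine computation.
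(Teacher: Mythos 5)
Your proposal is correct and follows essentially the same route as the paper: maximality yields (via Theorem~\ref{thm:bound_elliptic_Q}) the $(2n-2\d+1)$-design property and the value of $\abs{Y}$, the vanishing of $A'_r$, $B'_r$, $C'_r$ for $1\le r\le n-\d$ together with $A'_0=B'_0=C'_0=\abs{Y}$ determines $A_s$, $B_s$, $C_s$ through the inversion~\eqref{eqn:ev_transform}, and the inner distribution is then reconstructed exactly as you describe (the paper outsources this computation to the proof of \cite[Proposition~3.10]{Sch2015}). Your consistency check $A_\d+q^\d C_\d=0$ and the closed forms $A_\d={n\brack\d}(q^\d-1)$, $C_\d={n\brack\d}(q^{-\d}-1)$ are also correct.
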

\begin{proof}
If $Y$ is a maximal elliptic $(2\d)$-code in $\Q(2n,q)$, then by Theorem~\ref{thm:bound_elliptic_Q} we have $\abs{Y}=q^{2n(n-\d+1/2)}$ and $Y$ is a $(2n-2\d+1)$-design. The proof is then identical to that of the first part of ~\cite[Proposition~3.10]{Sch2015}.
\end{proof}


\section{Constructions}
\label{sec:Constructions}

In this section we provide constructions of maximal $d$-codes in $\S(m,q)$ and $\Q(m,q)$ using field extensions of $\FF_q$. Throughout this section we take $V=\FF_{q^m}$ and use the \emph{relative trace function} $\Tr_m:\FF_{q^m}\to\FF_q$, which is given by
\[
\Tr_m(y)=\sum_{i=1}^my^{q^i}.
\]

\subsection{Canonical representations}

In what follows we give canonical representations of quadratic forms $Q$ and symmetric bilinear forms $S$ on $\FF_{q^m}$ and describe the pairing $\ang{Q,S}$ in terms of these representations.
\begin{theorem}
\label{thm:sym_quad_unique_representation}
Let $Q\in\Q(m,q)$ be a quadratic form and let $S\in\S(m,q)$ be a symmetric bilinear form.
\begin{enumerate}[(1)]
\item If $m$ is odd, say $m=2n-1$, then there exist unique $f_0,\dots,f_{n-1}\in\FF_{q^m}$ and $g_0,\dots,g_{n-1}\in\FF_{q^m}$ such that $Q$ is given by
\[
Q(x)=\sum_{i=0}^{n-1}\Tr_m(f_ix^{q^i+1})
\]
and $S$ is given by
\[
S(x,y)=\Tr_m(g_0xy)+\sum_{i=1}^{n-1}\Tr_m(g_i(xy^{q^i}+x^{q^i}y)).
\]
Moreover, there are $\FF_q$-bases for $\FF_{q^m}$ such that with respect to these bases we have
\[
\ang{Q,S}=\chi\left(\sum\limits_{i=0}^{n-1}\Tr_m(f_ig_i)\right).
\]

\item If $m$ is even, say $m=2n$, then there exist unique $f_0,\dots,f_{n-1}\in\FF_{q^m}$ and $g_0,\dots,g_{n-1}\in\FF_{q^m}$ and $f_n,g_n\in\FF_{q^n}$ such that $Q$ is given by
\[
Q(x)=\sum_{i=0}^{n-1}\Tr_m(f_ix^{q^i+1})+\Tr_n(f_nx^{q^n+1})
\]
and $S$ is given by
\[
S(x,y)=\Tr_m(g_0xy)+\sum_{i=1}^{n-1}\Tr_m(g_i(xy^{q^i}+x^{q^i}y))+\Tr_m(g_nxy^{q^n}).
\]
Moreover, there are $\FF_q$-bases for $\FF_{q^m}$ such that with respect to these bases we have
\[
\ang{Q,S}=\chi\left(\sum\limits_{i=0}^{n-1}\Tr_m(f_ig_i)+\Tr_n(f_ng_n)\right).
\]
\end{enumerate}
\end{theorem}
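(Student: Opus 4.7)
The plan is to prove the existence and uniqueness of the canonical representations of $Q$ and $S$ by an injectivity-and-dimension-count argument, and then to verify the pairing formula by a direct computation in suitably chosen $\FF_q$-bases of $\FF_{q^m}$.

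Define the $\FF_q$-linear map $\phi$ from the proposed coefficient tuples to $\Q$ by the stated formula, and similarly $\psi$ for $\S$. A parameter count gives domain dimension $nm$ for odd $m=2n-1$ and $nm+n$ for even $m=2n$, matching $m(m+1)/2=\dim_{\FF_q}\Q=\dim_{\FF_q}\S$ in both cases, so surjectivity follows from injectivity. For injectivity of $\phi$, suppose $Q\equiv 0$ and pass to the polar form $B_Q(x,y)=Q(x+y)-Q(x)-Q(y)$: the $i=0$ summand $\Tr_m(f_0x^{2})$ contributes $\Tr_m(2f_0xy)$ (vanishing in characteristic $2$), each $i\ge 1$ summand contributes $\Tr_m(f_i(x^{q^i}y+xy^{q^i}))$, and for even $m$ the term $\Tr_n(f_nx^{q^n+1})$ contributes $\Tr_n(f_n(x^{q^n}y+xy^{q^n}))=\Tr_m(f_nxy^{q^n})$ via the transitivity identity $\Tr_m=\Tr_n\circ\Tr_{m/n}$. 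Fix $y$ and apply $\Tr_m(cx^{q^i})=\Tr_m(c^{q^{m-i}}x)$ to rewrite $B_Q(\cdot,y)$ as $\Tr_m(L(y)\cdot x)$ for a linearised polynomial $L(y)$ of degree less than $q^m$; nondegeneracy of $\Tr_m$ forces $L(y)\equiv 0$ as a reduced polynomial, and linear independence of the $m$ monomials $y^{q^0},\dots,y^{q^{m-1}}$ then peels off $f_i=0$ for every $i\ge 1$ and (when $m=2n$) $f_n=0$. In odd characteristic this also yields $f_0=0$; in characteristic $2$ the residual equation $\Tr_m(f_0x^2)\equiv 0$, together with the fact that squaring is a bijection of $\FF_{q^m}$ and the nondegeneracy of $\Tr_m$, gives $f_0=0$. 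The injectivity of $\psi$ is parallel: extract $\Tr_m(L'(y)x)=0$ directly from $S(x,y)\equiv 0$ without polarising and finish by the same Frobenius-independence argument.

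For the pairing formula, choose $\{\beta_j\}$ to be the $\Tr_m$-dual of $\{\alpha_j\}$, so that $\Tr_m(\alpha_j\beta_k)=\delta_{jk}$ and every $a\in\FF_{q^m}$ expands as $a=\sum_j\beta_j\Tr_m(\alpha_ja)$. Substituting the coordinate expansions in the canonical forms yields matrix entries $A_{jk}=\sum_i\Tr_m(f_i\alpha_k\alpha_j^{q^i})$ and $B_{jk}=\Tr_m(g_0\beta_j\beta_k)+\sum_{i=1}^{n-1}\Tr_m(g_i(\beta_j\beta_k^{q^i}+\beta_j^{q^i}\beta_k))+\Tr_m(g_n\beta_j\beta_k^{q^n})$ (with the $i=0$ and $i=n$ pieces adjusted as above). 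Forming $\tr(AB)=\sum_{j,k}A_{jk}B_{kj}$ and collapsing each resulting double $\Tr_m$ via the dual-basis identity, the diagonal $i=i'$ contributions produce $\Tr_m(f_ig_i)$, the analogous contribution from the $i=n$ block gives $\Tr_n(f_ng_n)$ (using trace transitivity once more), and the off-diagonal cross terms reduce to traces of elements killed by the Frobenius-exponent mismatch, hence vanish.

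The principal obstacle is the pairing computation, particularly in the two exotic regimes: the $i=0$ summand in characteristic $2$, where $\Tr_m(f_0x^2)$ is $\FF_q$-linear rather than genuinely quadratic and collapses $A$ to a diagonal contribution that must be matched with $B$ separately; and the $i=n$ summand for even $m$, where $\Tr_n$ rather than $\Tr_m$ is the natural trace, and the relation $\Tr_m(a)=2\Tr_n(a)$ for $a\in\FF_{q^n}$ degenerates in characteristic $2$, accounting for the distinct trace conventions appearing in the theorem statement. Verifying that these pieces combine with the bulk $1\le i\le n-1$ summands to produce exactly $\sum_{i=0}^{n-1}\Tr_m(f_ig_i)+\Tr_n(f_ng_n)$, with no residual cross terms, is the crux of the calculation.
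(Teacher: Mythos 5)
Your overall strategy coincides with the paper's: uniqueness by a dimension count plus injectivity, and the pairing formula by a computation in a pair of trace-dual bases $\{\alpha_j\}$, $\{\beta_j\}$, where the decisive fact is the orthogonality relation $\sum_j\alpha_j^{q^a}\beta_j^{q^b}=\delta_{ab}$ (in the paper this is the identity $U^TV=I$ feeding into the Dickson-matrix lemma; in your version it is exactly what kills the ``Frobenius-exponent mismatch'' terms, so you should state it explicitly rather than appeal loosely to the dual-basis expansion). Your injectivity argument via reduced linearised polynomials is sound and in fact more explicit than the paper's, which defers that step to the proof of Theorem~\ref{thm:constr_sym}. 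Also, your worry about the $i=0$ summand in characteristic $2$ is a non-issue: the matrix with entries $A_{jk}=\sum_i\Tr_m(f_i\alpha_j^{q^i}\alpha_k)$ is a legitimate representative of the coset $[A]$ (it differs from the upper-triangular representative by an alternating matrix, and $\tr(\,\cdot\,B)$ is constant on such cosets because $B$ is symmetric), and with this representative the $i=0$ term is handled exactly like the others; no separate diagonal matching is required.

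The genuine gap is the one you yourself label ``the crux'' and then do not resolve: the term $\Tr_n(f_nx^{q^n+1})$ when $m=2n$ and $q$ is even. Expanding $x=\sum_jx_j\alpha_j$ gives $x^{q^n+1}=\sum_{j,k}x_jx_k\alpha_j^{q^n}\alpha_k$, whose individual summands do not lie in $\FF_{q^n}$, so $\Tr_n$ cannot be distributed to produce matrix entries, and the substitute $\Tr_n(w)=\tfrac12\Tr_m(w)$ for $w\in\FF_{q^n}$ is exactly what degenerates in characteristic $2$; your formula for $A_{jk}$ therefore has no meaning for the $i=n$ piece, and ``adjusted as above'' refers only to the polarised (bilinear) form, not to the quadratic form itself. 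A working device is to choose $h\in\FF_{q^m}$ with $h+h^{q^n}=f_n$ (possible by surjectivity of the relative trace), so that $\Tr_n(f_nw)=\Tr_m(hw)$ for all $w\in\FF_{q^n}$; the term then contributes entries $\Tr_m(h\alpha_j^{q^n}\alpha_k)$, the orthogonality relation leaves only its pairing with the $\Tr_m(g_nxy^{q^n})$ part of $S$, and the surviving quantity is $\Tr_m(g_nh)=\Tr_n\bigl(g_n(h+h^{q^n})\bigr)=\Tr_n(f_ng_n)$, which is the claimed summand. Without some such mechanism your calculation cannot be completed in the case that $m$ and $q$ are both even, so as written the proposal establishes the pairing formula only for odd $m$ (and, for even $m$, only in odd characteristic).
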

\par
To prove Theorem~\ref{thm:sym_quad_unique_representation}, we require some notation and a lemma. Given a linearised polynomial $L\in\FF_{q^m}[X]$ of the form
\begin{equation}
L=\sum_{k=0}^{m-1}c_kX^{q^k},   \label{eqn:linearised_poly}
\end{equation}
we associate with $L$ its \emph{Dickson matrix} $D_L$, given by $(D_L)_{1\le i,j\le m}=c_{j-i}^{q^i}$, where the index of $c_k$ is taken modulo $m$. Henceforth the entries of an $m\times m$ matrix~$M$ are denoted by $M_{ij}$, where $1\le i,j\le m$.
\begin{lemma}
\label{lem:lin_poly_to_mat}
Let $L\in\FF_{q^m}[X]$ be the linearised polynomial~\eqref{eqn:linearised_poly}. Let $\{\xi_1,\xi_2,\dots,\xi_m\}$ be an $\FF_q$-basis for $\FF_{q^m}$ and let $M\in\FF_q^{m\times m}$ be given by  $M_{ij}=\Tr_m(\xi_iL(\xi_j))$. Then we have
\[
M=PD_LP^T,
\]
where $P\in\FF_q^{m\times m}$ is given by $P_{ij}=\xi_i^{q^j}$.
\end{lemma}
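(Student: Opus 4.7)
The plan is a direct entrywise verification: I expand both $(PD_LP^T)_{ij}$ and $\Tr_m(\xi_i L(\xi_j))$ by the definitions and show that they yield the same double sum after a change of summation index.

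Concretely, for fixed $i,j\in\{1,\dots,m\}$, I would first unfold the matrix product using $P_{ij}=\xi_i^{q^j}$ and $(D_L)_{ab}=c_{b-a}^{q^a}$ (indices mod $m$):
\[
(PD_LP^T)_{ij}=\sum_{a,b=1}^m P_{ia}\,(D_L)_{ab}\,P_{jb}=\sum_{a,b=1}^m \xi_i^{q^a}\,c_{b-a}^{q^a}\,\xi_j^{q^b}.
\]
Substituting $k=b-a\pmod m$ and using $\xi_j^{q^m}=\xi_j$ to absorb the wrap-around, this becomes
\[
(PD_LP^T)_{ij}=\sum_{a=1}^m\sum_{k=0}^{m-1} c_k^{q^a}\,\xi_i^{q^a}\,\xi_j^{q^{a+k}}.
\]
On the other side, $L(\xi_j)=\sum_{k=0}^{m-1}c_k\xi_j^{q^k}$, and applying the $\FF_q$-linear trace $\Tr_m(y)=\sum_{a=1}^m y^{q^a}$ together with the additivity and multiplicativity of the $q^a$-th power Frobenius on $\FF_{q^m}$ gives
\[
\Tr_m\bigl(\xi_i L(\xi_j)\bigr)=\sum_{a=1}^m\bigl(\xi_iL(\xi_j)\bigr)^{q^a}=\sum_{a=1}^m\sum_{k=0}^{m-1} c_k^{q^a}\,\xi_i^{q^a}\,\xi_j^{q^{a+k}}.
\]
The two double sums agree term by term, which establishes $M=PD_LP^T$.

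There is no serious obstacle: the statement is a pure bookkeeping identity. The only points requiring a little care are the cyclic interpretation of the subscript $b-a$ in the Dickson matrix and the fact that $q^m$-th powers act trivially on $\FF_{q^m}$, both of which are compatible because the outer sum in $\Tr_m$ runs over a full set of Frobenius iterates.
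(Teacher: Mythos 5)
Your proof is correct and is essentially the same computation as the paper's: the paper simply organizes it by first writing $M=PR$ with $R_{ij}=L(\xi_j)^{q^i}$ and then showing $R=D_LP^T$ via the identity $L(x)^{q^i}=\sum_{k}c_{k-i}^{q^i}x^{q^k}$, whereas you expand both sides into one double sum and match terms. The points you flag (cyclic indices in $D_L$ and $y^{q^m}=y$) are exactly the ones that matter, and your handling of them is fine.
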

\begin{proof}
We can write $M=PR$, where $R_{ij}=L(\xi_j)^{q^i}$. For every $x\in\FF_{q^m}$, we have
\[
L(x)^{q^i}=\sum_{k=1}^mc_{k-i}^{q^i}\,x^{q^k},
\]
where the index is taken modulo $m$. We conclude that $R=D_LP^T$, as required.
\end{proof}
\par
We now prove Theorem~\ref{thm:sym_quad_unique_representation}.
\begin{proof}[Proof of Theorem~\ref{thm:sym_quad_unique_representation}]
It is easy to see that the possible choices for the $f_i$'s and the~$g_i$'s yield $q^{m(m+1)/2}$ quadratic forms and $q^{m(m+1)/2}$ symmetric bilinear forms. In order to prove that these are distinct, it is sufficient to show that $Q$ or $S$ is the zero form if and only if the $f_i$'s are all zero or the $g_i$'s are all zero, respectively. For $\S(m,q)$ and odd $m$, this is accomplished by the proof of Theorem~\ref{thm:constr_sym}. The other cases can be proved similarly, which we leave to the reader. This proves the existence and uniqueness of the $f_i$'s and the $g_i$'s.
\par
It remains to prove the expressions for the pairing $\ang{Q,S}$. We present the proof only in the case that $m$ is odd. Slight modifications also give a proof for even~$m$. Let $\{\alpha_1,\alpha_2,\dots,\alpha_m\}$ and $\{\beta_1,\beta_2,\dots,\beta_m\}$ be a pair of dual $\FF_q$-bases for $\FF_{q^m}$, that is
\[
\Tr_m(\alpha_i\beta_j)=\delta_{ij}\quad\text{for all $i,j$}.
\]
We use the former basis to associate cosets of alternating matrices with quadratic forms and the latter to associate symmetric matrices with symmetric bilinear forms. It will be convenient to define the $m\times m$ matrices $U$ and $V$ by $U_{ij}=\alpha_i^{q^j}$ and $V_{ij}=\beta_i^{q^j}$. Notice that the duality of the two involved bases implies $UV^T=I$, and so $U^TV=I$. 
\par
Define the linearised polynomials
\begin{alignat*}{3}
F_0&=f_0X, \quad & F_1&=\sum_{i=1}^{n-1}(f_iX^{q^i}+f_i^{q^{m-i}}X^{q^{m-i}}),\quad  F_2&\;=\sum_{i=1}^{n-1}f_iX^{q^i},\\
G_0&=g_0X, \quad & G_1&=\sum_{i=1}^{n-1}(g_iX^{q^i}+g_i^{q^{m-i}}X^{q^{m-i}}).
\end{alignat*}
Then we have
\[
S(x,y)=\Tr_m(x(G_0(y)+G_1(y)))
\]
and so the matrix $B$ of $S$ is given by $B=B_0+B_1$, where
\begin{align*}
(B_0)_{ij}&=\Tr_m(\beta_iG_0(\beta_j)),\\
(B_1)_{ij}&=\Tr_m(\beta_iG_1(\beta_j)).
\end{align*}
To associate cosets of alternating matrices with quadratic forms, we distinguish the cases that $q$ is odd or even.
\par
For odd $q$, let $A$ be the unique symmetric matrix associated with the quadratic form $Q$. From~\eqref{eqn:coordinate_rep_Q} we find that this matrix is given by 
\begin{align*}
A_{ij}&=\tfrac{1}{2}(Q(\alpha_i+\alpha_j)-Q(\alpha_i)-Q(\alpha_j))\\[1ex]
&=\Tr_m(\alpha_i(F_0(\alpha_j)+\tfrac{1}{2}F_1(\alpha_j))).
\end{align*}
Write $F=F_0+\tfrac{1}{2}F_1$ and $G=G_0+G_1$ and use Lemma~\ref{lem:lin_poly_to_mat} to obtain
\[
\tr(AB)=\tr(UD_FU^TVD_GV^T)=\tr(D_FD_G),
\]
and therefore
\[
\tr(AB)=\sum\limits_{i=0}^{n-1}\Tr_m(f_ig_i),
\]
as required.
\par
For even $q$, let $A'$ be the unique upper triangular matrix associated with $Q$. From~\eqref{eqn:coordinate_rep_Q} we find that this matrix is given by $A'_{ii}=Q(\alpha_i)$ and $A'_{ij}=Q(\alpha_i+\alpha_j)-Q(\alpha_i)-Q(\alpha_j)$ for $i<j$. In fact, it is more convenient to work with a slightly different matrix of $Q$, namely $A=A_0+A_1$, where $A_0$ and $A_1$ are given by 
\begin{align}
(A_0)_{ij}&=\Tr_m(\alpha_iF_0(\alpha_j))   \nonumber\\[1ex]
(A_1)_{ij}&=\begin{cases}
\Tr_m(\alpha_iF_1(\alpha_j)) & \text{for $i<j$}\\
\Tr_m(\alpha_iF_2(\alpha_j)) & \text{for $i=j$}\\
0                            & \text{otherwise}.
\end{cases}   \label{eqn:def_A1}
\end{align}
Notice that $A-A'$ is alternating, which is in fact the off-diagonal part of~$A_0$. Therefore~$A$ and $A'$ represent the same quadratic form. We have
\[
\tr(AB)=\tr(A_0B_0)+\tr(A_1B_1)+\tr(A_0B_1)+\tr(A_1B_0).
\]
Using Lemma~\ref{lem:lin_poly_to_mat} we have
\[
\tr(A_0B_0)=\tr(UD_{F_0}U^TVD_{G_0}V^T)=\tr(D_{F_0}D_{G_0})=\Tr_m(f_0g_0).
\]
Now define an inner product on alternating matrices in $\FF_q^{m\times m}$ by
\[
(X,Y)=\sum_{i<j}X_{ij}Y_{ij}.
\]
This inner product satisfies
\[
(WXW^T,Y)=(X,W^TYW)
\]
for every $W\in\FF_q^{m\times m}$. Using this property and Lemma~\ref{lem:lin_poly_to_mat}, we obtain
\[
\tr(A_1B_1)=(A_1+A_1^T,B_1)=(UD_{F_1}U^T,VD_{G_1}V^T)=(D_{F_1},D_{G_1}),
\]
and therefore
\[
\tr(A_1B_1)=\sum_{i=1}^{n-1}\Tr_m(f_ig_i).
\]
Now, since $A_0$ is symmetric and $B_1$ is alternating, we have $\tr(A_0B_1)=0$. From Lemma~\ref{lem:lin_poly_to_mat}, we find that $B_0=VD_{G_0}V^T$, where $D_{G_0}$ is a diagonal matrix. We claim that $A_1=UEU^T$, where $E$ has only zeros on the main diagonal. This implies that
\[
\tr(A_1B_0)=\tr(UEU^TVD_{G_0}V^T)=\tr(ED_{G_0})=0,
\]
and so completes the proof.
\par
It remains to prove the claim. The required matrix $E$ is given by $E=V^TA_1V$, and so for every $i$, we have using~\eqref{eqn:def_A1}
\begin{align*}
E_{ii}&=\sum_{k,\ell}\beta_k^{q^i}(A_1)_{k\ell}\,\beta_\ell^{q^i}\\
&=\sum_k\beta_k^{q^i}\Tr_m(\alpha_kF_2(\alpha_k))\beta_k^{q^i}+\sum_{k<\ell}\beta_k^{q^i}\Tr_m(\alpha_kF_1(\alpha_\ell))\beta_\ell^{q^i}\\
&=\sum_k\beta_k^{q^i}\Tr_m(\alpha_kF_2(\alpha_k))\beta_k^{q^i}+\sum_{k<\ell}\beta_k^{q^i}\Tr_m(\alpha_kF_2(\alpha_\ell)+\alpha_\ell F_2(\alpha_k))\beta_\ell^{q^i}\\
&=\sum_{k,\ell}\beta_k^{q^i}\Tr_m(\alpha_kF_2(\alpha_\ell))\beta_\ell^{q^i}\\
&=v_i^TUD_{F_2}U^Tv_i=(v_i^TU)D_{F_2}(v_i^TU)^T,
\end{align*}
where $v_i$ is the $i$-th column of $V$. Since $V^TU=I$, we find that the main diagonal of~$E$ equals the main diagonal of $D_{F_2}$, which is zero. This proves the claim.
\end{proof}
\par


\subsection{The constructions}

We now give constructions of maximal $d$-codes in $\S(m,q)$ and $\Q(m,q)$. We begin with recalling constructions from~\cite{Sch2010} and~\cite{Sch2015} of additive $d$-codes in $\S(m,q)$.
\begin{theorem}[{\cite[Theorem.~4.4]{Sch2015}}]
\label{thm:constr_sym}
Let $d$ be an integer with the same parity as $m$ satisfying $1\le d\le m$ and let $Z$ be the subset of $\S(m,q)$ formed by the symmetric bilinear forms
\begin{gather*}
S:\FF_{q^m}\times \FF_{q^m}\to\FF_q\\
S(x,y)=\Tr_m(g_0xy)+\sum_{i=1}^{(m-d)/2}\Tr_m(g_i(xy^{q^i}+x^{q^i}y)),\quad g_i\in\FF_{q^m}.
\end{gather*}
Then $Z$ is an additive $d$-code in $\S(m,q)$ of size $q^{m(m-d+2)/2}$. In particular, $Z$ is a maximal $d$-code in~$\S(m,q)$ for odd $m$ and is maximal among additive $d$-codes in~$\S(m,q)$ for even $m$.
\end{theorem}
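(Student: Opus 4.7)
The plan is to verify three assertions about $Z$: additivity, the claimed cardinality $q^{m(m-d+2)/2}$, and a rank lower bound of $d$ on every nonzero element; maximality then follows at once from Theorem~\ref{thm:bounds_S}, since for $m$ and $d$ of the same parity one has $m-d$ even and the first case of that theorem applies with equality. Additivity is immediate because $S$ depends $\FF_q$-linearly on $(g_0,\dots,g_e)$, where $e=(m-d)/2$.

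The decisive reformulation is to rewrite $S$ in adjoint form using the identity $\Tr_m(g_ixy^{q^i})=\Tr_m((g_ix)^{q^{m-i}}y)$, which yields $S(x,y)=\Tr_m(y\,L(x))$ with
\[
L(x)=g_0x+\sum_{i=1}^{e}\bigl(g_ix^{q^i}+g_i^{q^{m-i}}x^{q^{m-i}}\bigr),
\]
an $\FF_q$-linearised polynomial whose exponents $\{q^0,\dots,q^e,q^{m-e},\dots,q^{m-1}\}$ are pairwise distinct and all at most $q^{m-1}$. Non-degeneracy of the trace pairing then identifies $\rad(S)$ with $\ker L$ as $\FF_q$-subspaces of $\FF_{q^m}$. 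Moreover, since $\deg L<q^m$, the function $L$ vanishes on all of $\FF_{q^m}$ iff its coefficients are all zero, iff every $g_i=0$; this proves injectivity of the parametrisation and therefore gives $|Z|=(q^m)^{1+e}=q^{m(m-d+2)/2}$.

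The key step is to bound $|\ker L|$ for nonzero $L$ by raising to the $q^e$-th power. As a polynomial, $L(x)^{q^e}$ has exponents $q^{j+e}$ for $j$ in the exponent support of $L$, and on $\FF_{q^m}$ the identity $x^{q^m}=x$ lets us reduce the exponent of $q$ modulo $m$: the first block $\{q^0,\dots,q^e\}$ shifts to $\{q^e,\dots,q^{2e}\}$ (no reduction needed since $2e<m$), while the second block $\{q^{m-e},\dots,q^{m-1}\}$ wraps around to $\{q^0,\dots,q^{e-1}\}$, so the two blocks fuse into the contiguous range $\{q^0,q^1,\dots,q^{m-d}\}$. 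Hence $L(x)^{q^e}$, viewed as a function on $\FF_{q^m}$, is represented by a polynomial of degree at most $q^{m-d}$, which is nonzero (otherwise $L$ itself would vanish on $\FF_{q^m}$, contradicting $L\ne 0$) and therefore has at most $q^{m-d}$ zeros. Since $x\mapsto x^{q^e}$ is a bijection of $\FF_{q^m}$, the same bound holds for $|\ker L|$, giving $\dim_{\FF_q}\rad(S)\le m-d$ and hence $\rank(S)\ge d$. The delicate ingredient is exactly this wrap-around alignment of exponents: the two blocks of $L$ have total length $m-d+1$ and are placed symmetrically so as to fuse into a contiguous interval after the Frobenius shift, without which the polynomial-degree bound would be vacuous.
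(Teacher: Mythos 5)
Your proof is correct, and it follows essentially the same route as the source the paper cites for this theorem (\cite[Theorem~4.4]{Sch2015}) and as the analogous argument the paper itself gives for Theorem~\ref{thm:constr_elliptic}: rewrite $S(x,y)=\Tr_m(yL(x))$ for a linearised polynomial $L$, identify $\rad(S)$ with $\ker L$, and bound $\dim\ker L$ by $m-d$ via the Frobenius shift that fuses the two exponent blocks into a contiguous range of length $m-d+1$. The only cosmetic difference is that you raise $L(x)$ to the $q^{e}$-th power where the cited proof substitutes $x\mapsto x^{q^{m-e}}$; these are equivalent.
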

\par
Whenever $m-d$ is odd, Theorem~\ref{thm:constr_sym} gives $(d+2)$-codes $Z$ in $\S(m+1,q)$ for which equality holds in Theorem~\ref{thm:bounds_S}. Let $W$ be an $m$-dimensional subspace of $V(m+1,q)$ and define the \emph{punctured} set (with respect to $W$) of $Z$ to be
\[
Z^*=\big\{S|_W:S\in Z\big\},
\] 
where $S|_W$ is the restriction of $S$ onto $W$. Then $Z^*$ is a $d$-code in $\S(m,q)$ for which again equality holds in Theorem~\ref{thm:bounds_S}. This shows that $Z^*$ is a maximal $d$-code in~$\S(m,q)$ for odd $d$ and is maximal among additive $d$-codes in~$\S(m,q)$ for even $d$.
\par
For odd $q$, Theorem~\ref{thm:sym_quad_unique_representation} of course also gives corresponding sets of quadratic forms by associating a quadratic form $Q$ with $S$ via $Q(x)=\tfrac{1}{2}S(x,x)$. It therefore remains to give constructions of maximal $d$-codes in $\Q(m,q)$ for even $q$. The following consequence of Theorems~\ref{thm:sym_quad_unique_representation} and~\ref{thm:constr_sym} gives a construction for $d$-codes in~$\Q(m,q)$ when both $m$ and~$d$ are odd (and where $q$ can have either parity).
\begin{theorem}
\label{thm:constr_m_odd_d_odd}
Let $m$ and $d$ be odd integers satisfying $1\le d\le m$ and let~$Y$ be the subset of $\Q(m,q)$ formed by the quadratic forms
\begin{gather*}
Q:\FF_{q^m}\to\FF_q\\
Q(x)=\sum_{i=(d-1)/2}^{(m-1)/2}\Tr_m(f_ix^{q^i+1}),\quad f_i\in\FF_{q^m}.
\end{gather*}
Then $Y$ is additive and a maximal $d$-code in $\Q(m,q)$ of size $q^{m(m-d+2)/2}$.
\end{theorem}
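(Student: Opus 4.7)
The plan is to prove additivity and the size claim first, then establish the $d$-code property by a duality argument that reduces everything to Theorem~\ref{thm:constr_sym}. Additivity and size are immediate: the parametrisation $(f_{(d-1)/2},\dots,f_{(m-1)/2}) \mapsto Q$ is $\FF_q$-linear, and by the uniqueness part of Theorem~\ref{thm:sym_quad_unique_representation} it is injective, so $Y$ is an additive subgroup of $\Q(m,q)$ with $\abs{Y}=q^{m(m-d+2)/2}$. The case $d=1$ is trivial because then $Y=\Q(m,q)$ and every nonzero quadratic form has rank at least $1$; henceforth assume $d\ge 3$.

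The key step will be to identify the annihilator $Y^\circ\subset\S(m,q)$. Using the pairing formula
\[
\ang{Q,S}=\chi\Big(\sum_{i=0}^{(m-1)/2}\Tr_m(f_ig_i)\Big)
\]
supplied by Theorem~\ref{thm:sym_quad_unique_representation}, where $g_0,\dots,g_{(m-1)/2}$ are the canonical parameters of $S$, the condition $\ang{Q,S}=1$ for every $Q\in Y$ reduces, after letting the $f_i$ with $i\ge (d-1)/2$ vary freely, to $g_{(d-1)/2}=\dots=g_{(m-1)/2}=0$ by non-degeneracy of the trace pairing, while $g_0,\dots,g_{(d-3)/2}$ remain unconstrained. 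Hence $Y^\circ$ is exactly the set of symmetric bilinear forms $S(x,y)=\Tr_m(g_0xy)+\sum_{i=1}^{(d-3)/2}\Tr_m(g_i(xy^{q^i}+x^{q^i}y))$, which is precisely the code of Theorem~\ref{thm:constr_sym} with parameter $d'=m-d+3$ (odd and in $[3,m]$ since $3\le d\le m$).

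The proof will then be completed by duality. Theorem~\ref{thm:constr_sym} gives that $Y^\circ$ is an additive maximal $d'$-code in $\S(m,q)$; since $d'$ is odd, Theorem~\ref{thm:bounds_S} implies that $Y^\circ$ is a $t$-design with
\[
t=2\Big(\tfrac{m+1}{2}-\tfrac{d'-1}{2}\Big)=d-1.
\]
Applying Theorem~\ref{thm:dual} with $X=Y^\circ$, and writing $(a_k)_{k\in I}$ for the inner distribution of $(Y^\circ)^\circ=Y$ and $((a^\circ)'_k)$ for the dual inner distribution of $Y^\circ$, one has $\abs{Y^\circ}\,a_k=(a^\circ)'_k$. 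The $(d-1)$-design property forces $(a^\circ)'_k=0$ for every $k\in I_{d-1}$, so $a_k=0$ for such $k$, which is precisely the $d$-code property of $Y$. Maximality of $Y$ follows from Theorem~\ref{thm:bound_Q} for even $q$ and from Theorem~\ref{thm:bounds_S} through the isomorphism $\Q\cong\S$ for odd $q$. The main obstacle is the annihilator computation itself, but it is an elementary consequence of the explicit pairing formula; the duality route neatly circumvents any direct rank lower bound on forms in $Y$, which would otherwise require a delicate linearised-polynomial analysis behaving differently in odd and even characteristic.
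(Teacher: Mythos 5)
Your proposal is correct and follows essentially the same route as the paper's proof: identify the annihilator $Y^\circ$ via the pairing of Theorem~\ref{thm:sym_quad_unique_representation} as the maximal $(m-d+3)$-code of Theorem~\ref{thm:constr_sym}, deduce from Theorem~\ref{thm:bounds_S} that $Y^\circ$ is a $(d-1)$-design, transfer this to the $d$-code property of $Y$ via Theorem~\ref{thm:dual}, and get maximality from Theorems~\ref{thm:bound_Q} and~\ref{thm:bounds_S}. The explicit annihilator computation and the separate treatment of $d=1$ are details the paper leaves implicit.
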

\begin{proof}
It is plain that $Y$ is additive and has size $q^{m(m-d+2)/2}$. From Theorems~\ref{thm:sym_quad_unique_representation} and~\ref{thm:constr_sym} we find that the annihilator of $Y^\circ$ of $Y$ is a maximal $(m-d+3)$-code in $\S(m,q)$. Theorem~\ref{thm:bounds_S} implies that $Y^\circ$ is a $(d-1)$-design and Theorem~\ref{thm:dual} then implies that~$Y$ is a $d$-code. From Theorem~\ref{thm:bound_Q} we find hat $Y$ is maximal.
\end{proof}
\par
Whenever $d$ is odd and~$m$ is even, Theorem~\ref{thm:constr_m_odd_d_odd} gives maximal $(d+2)$-codes $Y$ in $\Q(m+1,q)$. In fact, Theorem~\ref{thm:bound_Q} implies that $Y$ is also a maximal $(d+1)$-code in $\Q(m+1,q)$. Let $W$ be an $m$-dimensional subspace of $V(m+1,q)$ and define the \emph{punctured} set (with respect to $W$) of $Y$ to be
\[
Y^*=\big\{Q|_W:Q\in Y\big\},
\]
where $Q|_W$ is the restriction of $Q$ onto $W$. Then $Y^*$ is a maximal $d$-code in $\Q(m,q)$. This leaves the case that $m$ and $d$ are both even. In this case we have the following construction, which identifies $V$ with $\FF_{q^{m-1}}\times\FF_q$ and is essentially contained in~\cite{DelGoe1975}.
\begin{theorem}
\label{thm:constr_elliptic_codes}
Let $q$ be even, let $m$ and $d$ be even integers satisfying $1\le d\le m$, and let $Y$ be the subset of $\Q(m,q)$ formed by the quadratic forms
\begin{gather*}
Q:\FF_{q^{m-1}}\times\FF_q\to\FF_q\\
Q(x,u)=\sum_{i=1}^{m/2-1}\Tr_{m-1}\big((f_0x)^{q^i+1}\big)+u\Tr_{m-1}(f_0x)+\sum_{i=1}^{(m-d)/2}\Tr_{m-1}\big(f_ix^{q^i+1}\big),
\end{gather*}
where $f_i\in\FF_{q^{m-1}}$. Then $Y$ is a maximal $d$-code in $\Q(m,q)$ of size $q^{(m-1)(m-d+2)/2}$.
\end{theorem}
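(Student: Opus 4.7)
The plan is to verify three things in order---the size $|Y| = q^{(m-1)(m-d+2)/2}$, the $d$-code property, and maximality---the last following at once from the first two and Theorem~\ref{thm:bound_Q}. For the size, I would check injectivity of the parameterisation $(f_0, f_1, \ldots, f_{(m-d)/2}) \mapsto Q$. If $Q \equiv 0$ then, isolating the part linear in $u$, $\Tr_{m-1}(f_0 x) = 0$ for all $x \in \FF_{q^{m-1}}$, forcing $f_0 = 0$ by non-degeneracy of the trace pairing; setting $u = 0$ and invoking Theorem~\ref{thm:sym_quad_unique_representation}(1) on the $(m-1)$-dimensional space $\FF_{q^{m-1}}$ then forces $f_1 = \cdots = f_{(m-d)/2} = 0$.

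For the $d$-code property, fix distinct $Q, Q' \in Y$ and, in characteristic~$2$, write $D = Q + Q'$, $h_0 = f_0 + f_0'$, $h_i = f_i + f_i'$. I split into two cases. In Case A ($h_0 = 0$) the nonlinear $f_0$-dependent terms cancel, leaving $D(x,u) = \sum_{i=1}^{(m-d)/2} \Tr_{m-1}(h_i x^{q^i+1})$, a nonzero quadratic form on $\FF_{q^{m-1}}$ that is independent of $u$. Its associated alternating bilinear form is $B_D(x,y) = \Tr_{m-1}(x L(y))$ for an explicit self-adjoint linearised polynomial $L$ supported on $\{1, \ldots, (m-d)/2\} \cup \{(m+d)/2 - 1, \ldots, m - 2\}$; a Delsarte--Goethals-type rank bound for such trace forms yields $\rank B_D \ge d - 1$, and since alternating forms in characteristic~$2$ have even rank while $d$ is even, this upgrades to $\rank D \ge d$. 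In Case B ($h_0 \ne 0$) the bijective substitution $y = h_0 x$---using $(a+b)^{q^i+1} = a^{q^i+1} + a^{q^i}b + ab^{q^i} + b^{q^i+1}$ in characteristic~$2$---transforms $D$ into $\tilde D(y,u) = \tilde R(y) + u \Tr_{m-1}(y)$, where $\tilde R(y) = \sum_{i=1}^{m/2-1} \Tr_{m-1}(c_i y^{q^i+1})$ has coefficients $c_i$ depending explicitly on $g = f_0/h_0$ and on the normalised $h_i/h_0^{q^i+1}$. A direct inspection of $\rad B_{\tilde D}$ shows that $(y,u)$ lies in this radical if and only if $\Tr_{m-1}(y) = 0$ and $\tilde L(y) = u \in \FF_q$, where $\tilde L$ is the linearised polynomial associated with $\tilde R$; a careful analysis of these two constraints in combination with the Kerdock-like structure of the ``$g$-dependent'' part of $\tilde R$ bounds $\dim \rad B_{\tilde D} \le m - d$, giving $\rank D = \rank \tilde D \ge d$.

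The main obstacle is the Case B rank bound. The form $\tilde R$ depends nonlinearly on the Kerdock parameters $f_0, f_0'$, so the linear trace-polynomial arguments that suffice for Case A (and for Theorems~\ref{thm:constr_sym} and~\ref{thm:constr_m_odd_d_odd}) do not apply directly; one must instead exploit the interaction between the $u \Tr_{m-1}(y)$ summand and the self-adjoint polynomial $\tilde L$ on $\FF_{q^{m-1}}$. Concretely, splitting $\tilde R = \tilde R_K + \tilde R_{RM}$ into a ``Kerdock part'' (whose coefficients depend only on $g$) and a ``Reed--Muller part'' (carrying the $(m-d)/2$ coefficients $h_i/h_0^{q^i+1}$), one first shows that $\tilde R_K + u\Tr_{m-1}(y)$ has full rank $m$, and then uses that $\tilde R_{RM}$ contributes at most $m-d$ to the dimension of the radical. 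This is precisely the content of the original Delsarte--Goethals construction in~\cite{DelGoe1975}, stated there only for $q = 2$ but transferring essentially verbatim to arbitrary even $q$. Once the rank estimate is in hand, maximality follows at once from the even-$m$, even-$d$ upper bound of Theorem~\ref{thm:bound_Q}.
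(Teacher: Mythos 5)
Your proposal is correct in outline and, at its core, rests on the same external input as the paper: the Delsarte--Goethals rank bound for this family of forms. The paper's own proof is, however, much shorter. It computes the polarisation $S_Q$ of $Q$, namely the alternating bilinear form
\[
\Tr_{m-1}\big(f_0^2xy+f_0y\Tr_{m-1}(f_0x)+f_0(uy+vx)\big)+\sum_{i=1}^{(m-d)/2}\Tr_{m-1}\big(f_i(xy^{q^i}+x^{q^i}y)\big)
\]
on the $m$-dimensional space $\FF_{q^{m-1}}\times\FF_q$, and observes that, as $(f_0,\dots,f_{(m-d)/2})$ varies, this is exactly the family treated in \cite[Theorem~9]{DelGoe1975}; that theorem gives $\rank(S_Q-S_{Q'})\ge d$ for distinct parameter tuples, and since $\rad(Q)\subseteq\rad(S_Q)$ this yields $\rank(Q-Q')\ge d$ at once. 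This single citation makes your case split ($h_0=0$ versus $h_0\ne 0$), the substitution $y=h_0x$, and the separate injectivity argument unnecessary: injectivity of the parameterisation is an immediate consequence of the rank bound, since a coincidence of parameters would force a difference of rank $0<d$. Your reconstruction of the underlying argument is essentially faithful --- Case A is the routine linearised-polynomial bound plus the parity upgrade, while Case B contains the genuinely hard Kerdock-type full-rank statement, which you in the end also defer to \cite{DelGoe1975} rather than prove. One small correction: \cite{DelGoe1975} treats alternating bilinear forms over $\FF_q$ for general $q$, not only $q=2$, so no transfer argument is needed. Granting the citation, maximality follows from Theorem~\ref{thm:bound_Q} exactly as you say.
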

\begin{proof}
The quadratic form $Q$ polarises to the bilinear form
\[
\Tr_{m-1}(f_0^2xy+f_0y\Tr_{m-1}(f_0x)+f_0(uy+vx))+\sum_{i=1}^{(m-d)/2}\Tr_{m-1}(f_i(xy^{q^i}+x^{q^i}y)).
\]
It is known~\cite[Theorem~9]{DelGoe1975} that the difference between two such forms for distinct $(f_0,f_1,\dots,f_{(m-d)/2})$ has rank at least $d$. Therefore $Y$ is a $d$-code in $\Q(m,q)$ of size $q^{(m-1)(m-d+2)/2}$, hence a maximal $d$-code in $\Q(m,q)$ by Theorem~\ref{thm:bound_Q}.
\end{proof}
\par
We close this section by giving a construction for maximal elliptic $d$-codes in~$\Q(m,q)$.
\begin{theorem}
\label{thm:constr_elliptic}
Let $m$ be even and write $m=2n$. Let $\delta$ be an integer satisfying $1\le\delta\le n$ and let $Y$ be the subset of $\Q(m,q)$ formed by the quadratic forms
\begin{gather*}
Q:\FF_{q^m}\to\FF_q\\
Q(x)=\sum_{i=\delta}^{n-1}\Tr_m(f_ix^{q^i+1})+\Tr_n(f_nxy^{q^n}),\quad f_i\in\FF_{q^m},f_n\in\FF_{q^n}.
\end{gather*}
Then $Y$ is a maximal elliptic $(2\delta)$-code in $\Q(m,q)$ of size $q^{m(n-\d+1/2)}$.
\end{theorem}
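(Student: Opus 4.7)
The plan is to exhibit $Y$ as an elliptic $(2\d)$-code of the asserted size and then invoke Theorem~\ref{thm:bound_elliptic_Q}. First I would verify the size: by the uniqueness part of Theorem~\ref{thm:sym_quad_unique_representation}(2), distinct tuples $(f_\d,\dots,f_{n-1},f_n)\in\FF_{q^m}^{n-\d}\times\FF_{q^n}$ give distinct quadratic forms, so $\abs{Y}=q^{m(n-\d)+n}=q^{m(n-\d+1/2)}$. Next, using the pairing formula of Theorem~\ref{thm:sym_quad_unique_representation}(2), a symmetric bilinear form with parameters $(g_0,\dots,g_n)$ lies in the annihilator $Y^\circ$ precisely when $g_i=0$ for $i\in\{\d,\dots,n\}$. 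Hence $Y^\circ$ is the subset of $\S(m,q)$ of forms $S(x,y)=\Tr_m(g_0xy)+\sum_{i=1}^{\d-1}\Tr_m(g_i(xy^{q^i}+x^{q^i}y))$, which is exactly the code of Theorem~\ref{thm:constr_sym} with $d=m-2\d+2$; it is a maximal additive $(m-2\d+2)$-code of size $q^{m\d}$. Theorem~\ref{thm:dual} then gives that $Y$ is an $(m-2\d+1)$-design.

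The technical core is showing $Y$ is a $(2\d)$-code. For nonzero $Q\in Y$ I would compute the polar bilinear form $B_Q$ and handle the $\Tr_n$-term uniformly via the identity $\Tr_n(z)=\Tr_m(\theta z)$ for $z\in\FF_{q^n}$, where $\theta=2^{-1}$ in odd characteristic and $\theta\in\FF_{q^m}$ with $\theta+\theta^{q^n}=1$ in even characteristic. Combined with $f_n^{q^n}=f_n$, this reduces to
\[
B_Q(x,y)=\Tr_m\bigl(L_Q(x)\,y\bigr),\quad L_Q(X)=f_nX^{q^n}+\sum_{i=\d}^{n-1}\bigl(f_iX^{q^i}+f_i^{q^{m-i}}X^{q^{m-i}}\bigr).
\]
All $q$-exponents of $L_Q$ lie in $[\d,m-\d]$, so $L_Q=L_1^{q^\d}$ for a linearised polynomial $L_1$ of $q$-degree $m-2\d$, which has at most $q^{m-2\d}$ roots in $\FF_{q^m}$. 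Thus $\rank(B_Q)\ge 2\d$, and the general inclusion $\rad(Q)\subseteq\rad(B_Q)$ yields $\rank(Q)\ge 2\d$.

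Finally, to upgrade to an \emph{elliptic} $(2\d)$-code I would revisit the master identity from the proof of Theorem~\ref{thm:bound_elliptic_Q}. The $(2\d)$-code property kills $A_s$ and $C_s$ for $0<s<\d$; the $(m-2\d+1)$-design property kills $a'_1$ and every $A'_r,B'_r$ with $1\le r\le n-\d$. Substituting these along with $\abs{Y}=q^{(m+1)(n-\d)+\d}$ collapses the identity to $A_\d+q^\d C_\d=0$, i.e.~$2a_{2\d,1}+a_{2\d-1}=0$; since $a_{2\d-1}=0$ this forces $a_{2\d,1}=0$ and hence $Y$ is elliptic. Maximality then follows immediately from Theorem~\ref{thm:bound_elliptic_Q}. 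The principal obstacle is producing the factorisation $L_Q=L_1^{q^\d}$ together with the uniform reduction of the $\Tr_n$-term across both characteristics; once that trace-reduction step is in hand, everything else is essentially bookkeeping.
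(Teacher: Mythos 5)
Your proposal is correct and follows essentially the same route as the paper: rank at least $2\delta$ via root-counting for the linearised polynomial $L_Q$ (the paper substitutes $x\mapsto x^{q^{m-\delta}}$ where you factor $L_Q=L_1^{q^\delta}$, which is the same device), the design property via the annihilator and Theorem~\ref{thm:dual}, and the collapse of the identity from the proof of Theorem~\ref{thm:bound_elliptic_Q} to force $2a_{2\delta,1}+a_{2\delta-1}=0$ and hence ellipticity and maximality. The only cosmetic differences are your explicit $\theta$-based reduction of the $\Tr_n$-term and the explicit identification of $Y^\circ$ with the code of Theorem~\ref{thm:constr_sym}, both of which match what the paper does implicitly.
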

\begin{proof}
It is plain that $Y$ is additive. A straightforward computation gives
\[
Q(x+y)-Q(x)-Q(y)=\Tr_m(yL(x)),
\]
where
\[
L(x)=f_nx^{q^n}+\sum_{i=\delta}^{n-1}\left(f_ix^{q^i}+f_i^{q^{2n-i}}x^{q^{2n-i}}\right).
\]
Since $L(x^{q^{2n-\delta}})$ is induced by a polynomial of degree at most $2n-2\delta$, we find that~$Q$ has rank at least $2\delta$, unless $f_\delta=\dots=f_n=0$. Hence $Y$ is $(2\delta)$-code of size~$q^{m(n-\delta+1/2)}$. 
\par
Let $(a_i)_{i\in I}$ be the inner distribution of $Y$ and let $A_s$, $A'_r$, $C_s$, and $B'_r$ be as defined in Proposition~\ref{pro:inner_dist_Q}. By Theorems~\ref{thm:sym_quad_unique_representation} and~\ref{thm:constr_sym}, the annihilator $Y^\circ$ of $Y$ is a $(2n-2\delta+2)$-code in $\S(m,q)$. Thus Theorem~\ref{thm:dual} implies that $A'_r=B'_r=0$ for all $r$ satisfying $1\le r\le n-\delta$. As in the proof of Theorem~\ref{thm:bound_elliptic_Q}, we find that
\[
{n\brack \delta}(q^\delta A'_0+B'_0)=q^{(m+1)(n-\delta)}q^\delta\left({n\brack \delta}(A_0+q^\delta C_0)+A_\delta+q^\delta C_\delta\right).
\]
Since $A'_0=B'_0=\abs{Y}=q^{m(n-\delta+1/2)}$ and $A_0=C_0=1$, we conclude that
\[
A_\delta+q^\delta C_\delta=0.
\]
We have $A_\delta+q^\delta C_\delta=2a_{2\delta,1}+a_{2\delta-1}$ by definition and $a_{2\delta-1}=0$ since $Y$ is a $(2\delta)$-code. Therefore $a_{2\delta,1}=0$, and so $Y$ is an elliptic $(2\delta)$-code, hence a maximal elliptic $(2\delta)$-code in $\Q(m,q)$ by Theorem~\ref{thm:bound_elliptic_Q}.
\end{proof}


\section{Applications to classical coding theory}
\label{sec:applications}

In this section we construct classical error-correcting codes over finite fields from subsets of $\Q(m,q)$, extending results from~\cite{Sch2015} for odd $q$.
\par
A \emph{code} over $\FF_q$ of \emph{length}~$n$ is a subset of $\FF_q^n$; such a code is \emph{additive} if it is a subgroup of $(\FF_q^n,+)$. The (Hamming) \emph{weight} of $c\in\FF_q^n$, denoted by $\wt(c)$, is the number of nonzero entries in $c$. This weight induces a distance on $\FF_q^n$ and the smallest distance between two distinct elements of a code $\Cc$ is called the \emph{minimum distance} of $\Cc$. 
We associate with a code~$\Cc$ the polynomials
\[
\alpha(z)=\sum_{c\in\Cc}z^{\wt(c)}
\]
and
\[
\beta(z)=\frac{1}{\abs{\Cc}}\sum_{b,c\in\Cc}z^{\wt(c-b)},
\]
which are called the \emph{weight enumerator} and the \emph{distance enumerator} of $\Cc$, respectively. Note that, if~$\Cc$ is additive, then its weight enumerator coincides with its distance enumerator. 
\par
As usual, we let $V=V(m,q)$ be an $m$-dimensional $\FF_q$-vector space and $\Q(m,q)$ the set of quadratic forms on $V$. Since for every quadratic form $Q:V\to\FF_q$ we have $Q(0)=0$, we shall identify functions from $V$ to $\FF_q$ with vectors $\FF_q^{V^*}$, where $V^*=V-\{0\}$.
\par
Let $R_q(1,m)^*$ be the set of all $q^{m+1}$ affine functions from $V$ to $\FF_q$. This code has length $q^m-1$ and is the punctured version of the generalised first-order Reed-Muller code $R_q(1,m)$ of length $q^m$. If we identify $V$ with $\FF_{q^m}$, then $R_q(1,m)^*$ consists of the functions
\begin{gather*}
\FF_{q^m}\to\FF_q\\
x\mapsto\Tr_m(ax)+c,\quad a\in\FF_{q^m},\,c\in\FF_q.
\end{gather*}
We shall associate codes with subsets $Y$ of $\Q(m,q)$ by taking cosets of $R_q(1,m)^*$ with coset representatives from $Y$. Care must be taken in the case that $q=2$ since $x^2=x$ for all $x\in\FF_2$, which implies that every quadratic form in $\Q(m,2)$ of rank~$1$ is in fact also a linear function. Accordingly, we define a subset~$Y$ of $\Q(m,q)$ to be \emph{nondegenerate} if $q>2$ or if $q=2$ and $Y$ contains no forms of rank~$1$. For every nondegenerate subset~$Y$ of~$\Q(m,q)$, we define the code $\Cc(Y)$ of size $q^{m+1}\,\abs{Y}$ by
\[
\Cc(Y)=\bigcup_{Q\in Y}Q+R_q(1,m)^*.
\]
If $Y$ equals $\Q(m,q)$, then $\Cc(Y)$ is the punctured version $R_q(2,m)^*$ of the generalised second-order Reed-Muller code $R_q(2,m)$ of length $q^m$.
\par
For $i\in I$, define the polynomial
\[
\omega_i(z)=n_1z^{w_1}+n_2z^{w_2}+n_3z^{w_3}+n_4z^{w_4}+n_5z^{w_5}+n_6z^{w_6},
\]
where
\begin{align*}
w_1&=q^{m-1}(q-1)-q^{m-s-1}-1 & n_1&=\tfrac{1}{2}(q^{2s}(q-1)-q^s)(q-1)\\
w_2&=q^{m-1}(q-1)-q^{m-s-1}   & n_2&=\tfrac{1}{2}(q^{2s}+q^s)(q-1)\\
w_3&=q ^{m-1}(q-1)-1 & n_3&=(q^m-q^{2s}(q-1))(q-1)\\
w_4&=q ^{m-1}(q-1)   & n_4&=q^m-q^{2s}(q-1)\\
w_5&=q^{m-1}(q-1)+q^{m-s-1}-1 & n_5&=\tfrac{1}{2}(q^{2s}(q-1)+q^s)(q-1)\\
w_6&=q^{m-1}(q-1)+q^{m-s-1}   & n_6&=\tfrac{1}{2}(q^{2s}-q^s)(q-1)
\intertext{for $i=2s+1$ and}
w_1&=(q^{m-1}-\tau q^{m-s-1})(q-1)-1 &  n_1&=(q^{2s-1}-\tau q^{s-1})(q-1)\\
w_2&=(q^{m-1}-\tau q^{m-s-1})(q-1)   &  n_2&=q^{2s-1}+\tau q^{s-1}(q-1)\\
w_3&=q^{m-1}(q-1)-1 & n_3&=(q^m-q^{2s})(q-1)\\ 
w_4&=q^{m-1}(q-1)   & n_4&=q^m-q^{2s}\\
w_5&=q^{m-1}(q-1)+\tau q^{m-s-1}-1 & n_5&=(q^{2s-1}(q-1)+\tau q^{s-1})(q-1)\\
w_6&=q^{m-1}(q-1)+\tau q^{m-s-1}   & n_6&=(q^{2s-1}-\tau q^{s-1})(q-1)
\end{align*}
for $i=(2s,\t)$. The following result relates the polynomial $\omega_i(z)$ with the weight enumerator of cosets of $R_q(1,m)^*$. This result can be proved using the standard theory of quadratic forms. (Recall that $\Q_{2s+1}$ contains all quadratic forms of rank $2s+1$ and $\Q_{2s,\t}$ contains all quadratic forms of rank $2s$ and type $\t$.)
\begin{lemma}[{\cite[Propositions 4.1 and 5.1]{Li2017}}]
\label{lem:coset_weight_distribution}
Let $Q\in\Q(m,q)$ be a quadratic form with $Q\in\Q_i$. Then $\omega_i(z)$ is the weight enumerator of the coset $Q+R_q(1,m)^*$.
\end{lemma}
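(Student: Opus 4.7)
The plan is to reduce the lemma to a zero-counting problem for affine quadratic functions over $\FF_q$. Fix a representative $Q \in \Q_i$. Writing a general element of the coset as $F_{a,c}(x) = Q(x) + \Tr_m(ax) + c$ with $a \in \FF_{q^m}$, $c \in \FF_q$, the identification of functions on $V$ with vectors in $\FF_q^{V^*}$ gives
\[
\wt(F_{a,c}) = (q^m - 1) - Z(F_{a,c}) + [c = 0],
\]
where $Z(F_{a,c})$ is the number of zeros of $F_{a,c}$ on all of $V$ and $[\,\cdot\,]$ is the Iverson bracket. Thus it suffices to count how many pairs $(a,c)$ produce each possible value of $Z(F_{a,c})$, classified further by whether $c = 0$. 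Since the coset structure is $G$-invariant, the weight enumerator depends only on the orbit of $Q$, so we may assume $Q$ is the canonical representative from Proposition~\ref{pro:canonical_quad_forms}.

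Next I would appeal to the classical classification of the number of zeros of an affine quadratic function over $\FF_q$: this number depends only on the rank and type of the homogeneous quadratic part together with a discrete invariant measuring how the linear form $L(x) = \Tr_m(ax)$ interacts with the radical of the associated bilinear form $B_Q$ given by $B_Q(x,y) = Q(x+y) - Q(x) - Q(y)$. Concretely, if $L$ annihilates $\rad(B_Q)$, then $L(x) = B_Q(x, x_0)$ for some $x_0 \in V$, and completing the square (or its characteristic-$2$ analogue) gives $F_{a,c}(x) = Q(x + x_0) + (c - Q(x_0))$, reducing the zero count to that of $Q + c'$ for some $c' \in \FF_q$; this count is standard and depends only on whether $c'$ vanishes and on the type of $Q$. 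Otherwise, $L$ fails to annihilate $\rad(B_Q)$, and $F_{a,c}$ is an affine function of rank one larger than that of $Q$, again with a classical closed-form zero count.

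With the zero counts in hand, I would enumerate $(a,c)$ pairs by the relevant invariant: compute the number of $a$'s in each of the two cases above using $\dim\rad(B_Q)$, evaluate $Q(x_0)$ from the canonical form to track the adjusted constant $c - Q(x_0)$, and partition further by whether $c = 0$. Collecting the contributions produces exactly the six weight/multiplicity pairs $(w_j, n_j)$ appearing in the definition of $\omega_i(z)$, with the two odd-rank ($i = 2s+1$) and even-rank ($i = (2s,\t)$) cases treated in parallel.

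The main obstacle is the even-characteristic bookkeeping: for even $q$, $B_Q$ is alternating, $\rad(B_Q)$ strictly contains $\rad(Q)$ when $Q$ has odd rank, and the type $\t$ of the associated quadratic form can change when a linear perturbation promotes $F_{a,c}$ to a higher-rank form. Tracking the type across the two cases while simultaneously counting $(a,c)$ in each class is delicate but entirely standard; the complete calculation has been carried out explicitly by Li in~\cite[Propositions 4.1 and 5.1]{Li2017}, which we may cite.
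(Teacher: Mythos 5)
Your outline is correct and matches the paper's treatment: the paper gives no proof of this lemma, simply citing \cite[Propositions 4.1 and 5.1]{Li2017} and remarking that it follows from the standard theory of quadratic forms, which is exactly the zero-counting argument you sketch (and you likewise defer the final bookkeeping to Li). The reduction via $\wt(F_{a,c})=(q^m-1)-Z(F_{a,c})+[c=0]$ and the case split on whether $\Tr_m(ax)$ annihilates $\rad(B_Q)$ is the standard route, so there is nothing to add.
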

\par
Now, since $R_q(1,m)^*$ is additive, the distance enumerator of $\Cc(Y)$ equals
\[
\frac{1}{\abs{Y}}\sum_{b,c\in Y}\;\sum_{a\in R_q(1,m)^*}z^{\wt(a+c-b)}.
\]
The inner sum is the weight enumerator of the coset $c-b+R_q(1,m)^*$ and so Lemma~\ref{lem:coset_weight_distribution} gives the distance enumerator of $\Cc(Y)$ in terms of the inner distribution of $Y$.
\begin{theorem}
\label{thm:dist_C2}
Let $Y$ be a nondegenerate subset of $\Q(m,q)$ with inner distribution $(a_i)_{i\in I}$. Then the distance enumerator of $\Cc(Y)$ is $\sum_{i\in I}a_i\omega_i(z)$.
\end{theorem}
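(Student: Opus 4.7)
The plan is to follow the chain of reasoning sketched in the paragraph preceding the theorem, making each step rigorous. The main ingredients are (i) the additivity of $R_q(1,m)^*$, which converts double sums over $\Cc(Y)$ into sums of coset weight enumerators, and (ii) Lemma~\ref{lem:coset_weight_distribution}, which identifies these coset weight enumerators with the polynomials~$\omega_i(z)$ depending only on the rank/type of the coset representative.

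First I would unwind the definition of the distance enumerator. By construction every element of $\Cc(Y)$ is uniquely of the form $Q+a$ with $Q\in Y$ and $a\in R_q(1,m)^*$ (uniqueness uses nondegeneracy so that no element of $Y$ lies in $R_q(1,m)^*$ other than $0$, and the standard fact that distinct quadratic forms of rank $\ge 2$ differ by a non-linear function); this gives
\[
\beta_{\Cc(Y)}(z)=\frac{1}{\abs{\Cc(Y)}}\sum_{Q,Q'\in Y}\sum_{a,a'\in R_q(1,m)^*}z^{\wt(Q-Q'+a-a')}.
\]
Since $R_q(1,m)^*$ is an additive group, for each fixed $(Q,Q')$ the difference $a-a'$ runs over $R_q(1,m)^*$ with multiplicity $\abs{R_q(1,m)^*}=q^{m+1}$, so the expression collapses to
\[
\beta_{\Cc(Y)}(z)=\frac{q^{m+1}}{\abs{\Cc(Y)}}\sum_{Q,Q'\in Y}\sum_{a\in R_q(1,m)^*}z^{\wt(Q-Q'+a)}=\frac{1}{\abs{Y}}\sum_{Q,Q'\in Y}\omega_{Q-Q'}(z),
\]
where $\omega_{Q-Q'}(z)$ denotes the weight enumerator of the coset $(Q-Q')+R_q(1,m)^*$, using $\abs{\Cc(Y)}=q^{m+1}\abs{Y}$.

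Next I would apply Lemma~\ref{lem:coset_weight_distribution}: if $Q-Q'\in \Q_i$, then $\omega_{Q-Q'}(z)=\omega_i(z)$. Hence
\[
\beta_{\Cc(Y)}(z)=\frac{1}{\abs{Y}}\sum_{i\in I}\abs{\{(Q,Q')\in Y\times Y:Q-Q'\in\Q_i\}}\,\omega_i(z)=\sum_{i\in I}a_i\,\omega_i(z),
\]
by the very definition of the inner distribution $a_i=\abs{(Y\times Y)\cap R_i}/\abs{Y}$. The small subtlety about $i=(0,1)$ (the diagonal term) is handled automatically since $\omega_{0,1}(z)$ is the weight enumerator of $R_q(1,m)^*$ itself.

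I do not anticipate a serious obstacle: the whole argument is a bookkeeping exercise once Lemma~\ref{lem:coset_weight_distribution} is granted. The only delicate point is the nondegeneracy hypothesis, which is needed precisely to guarantee that the parametrisation $(Q,a)\mapsto Q+a$ of $\Cc(Y)$ is injective, so that $\abs{\Cc(Y)}=q^{m+1}\abs{Y}$; this is where the $q=2$ exclusion of rank-$1$ forms in $Y$ enters, since over $\FF_2$ a rank-$1$ quadratic form coincides with a linear function and would collapse the cosets.
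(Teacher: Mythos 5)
Your proof is correct and is essentially the paper's own argument: the additivity of $R_q(1,m)^*$ reduces the distance enumerator of $\Cc(Y)$ to $\frac{1}{\abs{Y}}\sum_{Q,Q'\in Y}$ of the weight enumerators of the cosets $(Q-Q')+R_q(1,m)^*$, and Lemma~\ref{lem:coset_weight_distribution} turns this into $\sum_{i\in I} a_i\omega_i(z)$ via the definition of the inner distribution. The only blemish is your parenthetical ``standard fact'' that distinct quadratic forms of rank $\ge 2$ differ by a non-linear function, which fails over $\FF_2$ (e.g.\ $x_1x_2$ and $x_1x_2+x_1^2$ both have rank $2$ but differ by $x_1^2=x_1$); however, the paper relies on the same implicit coset-disjointness when it declares $\abs{\Cc(Y)}=q^{m+1}\abs{Y}$, so this does not separate your argument from the one in the paper.
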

\par
If~$Y$ equals $\Q(m,q)$, then Theorem~\ref{thm:inner_dist_d_odd} with $\d=0$ and Theorem~\ref{thm:dist_C2} give the distance enumerator of $R_q(2,m)^*$. This complements results of McEliece~\cite{McE1969}, who determined the distance enumerator of the second-order generalised Reed-Muller code $R_q(2,m)$ itself. This latter result can also be recovered from Theorem~\ref{thm:inner_dist_d_odd} and a slightly modified version of Theorem~\ref{thm:dist_C2}.
\par
Now let $m$ and $d$ be two integers of equal parity satisfying $1\le d\le m$. If $d$ is odd, let~$Y$ be a nondegenerate maximal $d$-code in $\Q(m,q)$ and if $d$ is even, let $Y$ be a maximal elliptic $d$-code in $\Q(m,q)$. Writing $\delta=\floor{d/2}$, we have by Theorems~\ref{thm:bounds_S},~\ref{thm:bound_Q}, and~\ref{thm:bound_elliptic_Q}
\[
\abs{Y}=q^{m(m-2\delta+1)/2}.
\]
The code $\Cc(Y)$ has length $q^m-1$, cardinality $q^{m(m-2\delta+3)/2+1}$, and minimum distance
\begin{equation}
q^{m-1}(q-1)-q^{m-\delta-1}-1.   \label{eqn:designed_distance}
\end{equation}
The distance enumerator of $\Cc(Y)$ is determined by Theorems~\ref{thm:dist_C2} and~\ref{thm:inner_dist_d_odd} for odd~$d$ and by Theorems~\ref{thm:dist_C2} and~\ref{thm:inner_dist_elliptic} for even $d$.
\par
Now assume that $Y$ is obtained from the specific constructions in Theorems~\ref{thm:constr_m_odd_d_odd} and~\ref{thm:constr_elliptic}, according to whether $d$ is odd or even, respectively. Then $\Cc(Y)$ is a linear code and, in many cases, $\Cc(Y)$ is an optimal linear code or has the same parameters as the best known linear code~\cite{Gra2007}. Generalising work of Berlekamp~\cite{Ber1970}, it was shown by Li~\cite[Proposition~2.5]{Li2017} that if
\[
\frac{m}{3}\le \delta\le \frac{m}{2},
\]
then $\Cc(Y)$ is a narrow-sense primitive BCH code of designed minimum distance~\eqref{eqn:designed_distance}. Hence, in this case, the true minimum distance of $\Cc(Y)$ equals its designed minimum distance. This recovers principal results of~\cite{Ber1970} for $q=2$ and of~\cite{Li2017} for odd $q$. Using the results of~\cite{Sch2015} and additional arguments, the distance enumerator of $\Cc(Y)$ was obtained in~\cite{Li2017} for odd $q$. Using entirely different methods, the distance enumerator of the extended version of $\Cc(Y)$ was also obtained for $q=2$ in~\cite{Ber1970}. Our results give, in a uniform way, the distance enumerator of $\Cc(Y)$ for every prime power $q$. 
\par
Berlekamp~\cite{Ber1970} and Kasami~\cite{Kas1971} studied cyclic codes of the form $\Cc(Y)$ and related codes for other specific subsets $Y$ of $\Q(m,2)$. They determined the distance enumerators of such codes using methods that are completely different from our methods. Many of these results can be recovered and generalised to~$q>2$ using Theorems~\ref{thm:inner_dist_d_odd} and~\ref{thm:inner_dist_elliptic} together with Theorem~\ref{thm:dist_C2} or some suitable modification.
\par
We close this section by noting that, if~$Y$ is a maximal $(2\delta)$-code in $\Q(m,q)$ and~$m$ and $q$ are even, then $\Cc(Y)$ has length $q^m-1$, cardinality $q^{m(m-2\d+4)/2-(m-2\d)/2}$, and minimum distance
\[
(q^{m-1}-q^{m-\delta-1})(q-1)-1.
\]
For $q=2$, the extended version of $\Cc(Y)$ is known as the Delsarte-Goethals code and for $2\d=m$ it is known as the Kerdock code~\cite[Ch.~15]{MacSlo1977}.

\section*{Acknowledgment}

I would like to thank Shuxing Li for helpful discussions on applications to error-correcting codes.



\begin{thebibliography}{10}

\bibitem{Alb1938}
A.~A. Albert, \emph{Symmetric and alternate matrices in an arbitrary field.
  {I}}, Trans. Amer. Math. Soc. \textbf{43} (1938), no.~3, 386--436.

\bibitem{BacSerZem2017}
Ch. Bachoc, O.~Serra, and G.~Z\'emor, \emph{An analogue of {V}osper's theorem
  for extension fields}, Math. Proc. Cambridge Philos. Soc. \textbf{163}
  (2017), no.~3, 423--452.

\bibitem{BanIto1984}
E.~Bannai and T.~Ito, \emph{Algebraic combinatorics {I}: Association schemes},
  The Benjamin/Cummings Publishing Co., Inc., Menlo Park, CA, 1984.

\bibitem{Ber1970}
E.~R. Berlekamp, \emph{The weight enumerators for certain subcodes of the
  second order binary {R}eed-{M}uller codes}, Information and Control
  \textbf{17} (1970), 485--500.

\bibitem{Del1973}
Ph. Delsarte, \emph{An algebraic approach to the association schemes of coding
  theory}, Philips Res. Rep. Suppl. \textbf{10} (1973).

\bibitem{Del1976}
\bysame, \emph{Properties and applications of the recurrence
  {$F(i+1,k+1,n+1)=q^{k+1}F(i,k+1,n)-q^{k}F(i,k,n)$}}, SIAM J. Appl. Math.
  \textbf{31} (1976), no.~2, 262--270.

\bibitem{DelGoe1975}
Ph. Delsarte and J.~M. Goethals, \emph{Alternating bilinear forms over {${\rm
  GF}(q)$}}, J. Combin. Theory Ser. A \textbf{19} (1975), no.~1, 26--50.

\bibitem{DelLev1998}
Ph. Delsarte and V.~I. Levenshtein, \emph{Association schemes and coding
  theory}, IEEE Trans. Inform. Theory \textbf{44} (1998), no.~6, 2477--2504.

\bibitem{Dic1958}
L.~E. Dickson, \emph{Linear groups: {W}ith an exposition of the {G}alois field
  theory}, Dover Publications, Inc., New York, 1958.

\bibitem{FenWanMaMa2008}
R.~Feng, Y.~Wang, Ch. Ma, and J.~Ma, \emph{Eigenvalues of association schemes
  of quadratic forms}, Discrete Math. \textbf{308} (2008), no.~14, 3023--3047.

\bibitem{Gra2007}
M.~Grassl, \emph{Bounds on the minimum distance of linear codes and quantum
  codes}, Online available at \url{http://www.codetables.de}, 2007.

\bibitem{Hou2001}
X.-D. Hou, \emph{The eigenmatrix of the linear association scheme on
  {$R(2,m)$}}, Discrete Math. \textbf{237} (2001), no.~1-3, 163--184.

\bibitem{Kas1971}
T.~Kasami, \emph{The weight enumerators for several classes of subcodes of the
  {$2$}nd order binary {R}eed-{M}uller codes}, Information and Control
  \textbf{18} (1971), 369--394.

\bibitem{Li2017}
Sh. Li, \emph{The minimum distance of some narrow-sense primitive {BCH} codes},
  SIAM J. Discrete Math. \textbf{31} (2017), no.~4, 2530--2569.

\bibitem{MacSlo1977}
F.~J. MacWilliams and N.~J.~A. Sloane, \emph{The theory of error-correcting
  codes}, Amsterdam, The Netherlands: North Holland, 1977.

\bibitem{Mac1969}
J.~MacWilliams, \emph{Orthogonal matrices over finite fields}, Amer. Math.
  Monthly \textbf{76} (1969), 152--164.

\bibitem{McE1969}
R.~McEliece, \emph{Quadratic forms over finite fields and second-order
  {R}eed-{M}uller codes}, JPL Space Programs Summary 37-58 \textbf{III} (1969),
  28--33.

\bibitem{Sch2010}
K.-U. Schmidt, \emph{Symmetric bilinear forms over finite fields of even
  characteristic}, J. Combin. Theory Ser. A \textbf{117} (2010), no.~8,
  1011--1026.

\bibitem{Sch2015}
\bysame, \emph{Symmetric bilinear forms over finite fields with applications to
  coding theory}, J. Algebraic Combin. \textbf{42} (2015), no.~2, 635--670.

\bibitem{Sch2016}
M.~Schmidt, \emph{Rank metric codes}, Master's thesis, University of Bayreuth,
  Germany, 2016.

\bibitem{vLiWil2001}
J.~H. van Lint and R.~M. Wilson, \emph{A course in combinatorics}, second ed.,
  Cambridge University Press, Cambridge, 2001.

\bibitem{WanWanMaMa2003}
Y.~Wang, C.~Wang, C.~Ma, and J.~Ma, \emph{Association schemes of quadratic
  forms and symmetric bilinear forms}, J. Algebraic Combin. \textbf{17} (2003),
  no.~2, 149--161.

\end{thebibliography}

\providecommand{\bysame}{\leavevmode\hbox to3em{\hrulefill}\thinspace}
\providecommand{\MR}{\relax\ifhmode\unskip\space\fi MR }
\providecommand{\MRhref}[2]{%
  \href{http://www.ams.org/mathscinet-getitem?mr=#1}{#2}
}
\providecommand{\href}[2]{#2}

\end{document}